\documentclass[10pt]{stijl}     
  
\usepackage{mathdots}
\usepackage{amssymb} 
\usepackage{amsmath} 
\usepackage{stmaryrd}
\usepackage{euscript} 
\usepackage{latexsym}
\usepackage{wasysym}
\usepackage{mathrsfs} 
\usepackage{float} 
\usepackage{xcolor} 
\usepackage{enumitem}
\usepackage{bussproofs}
\usepackage{url}
\usepackage[round]{natbib} 
\bibliographystyle{plain}

\newcommand{\IPC}{{\sf IPC}}
\newcommand{\CPC}{{\sf CPC}}
\newcommand{\IQC}{{\sf IQC}}
\newcommand{\CQC}{{\sf CQC}}

\newcommand{\PA}{{\sf PA}}
\newcommand{\K}{{\sf K}}

\newcommand{\Kf}{{\sf K4}}
\newcommand{\KD}{{\sf KD}}

\newcommand{\Sf}{{\sf S4}}
\newcommand{\GL}{{\sf GL}} 
\newcommand{\Grz}{{\sf Grz}} 
\newcommand{\iK}{{\sf iK}_\Box}
\newcommand{\iKf}{{\sf iK4}_\Box}
\newcommand{\iSf}{{\sf iS4}_\Box}
\newcommand{\iKD}{{\sf iKD}_\Box}

\newcommand{\PLL}{{\sf LL}}
\newcommand{\CK}{{\sf CK}}

\newcommand{\LK}{{\sf LK}}
\newcommand{\LJ}{{\sf LJ}}
\newcommand{\LKm}{{\sf LK^-}}

\newcommand{\Gonei}{{\sf G1ip}}
\newcommand{\Gone}{{\sf G1cp}}
\newcommand{\Goneqi}{{\sf G1i}}
\newcommand{\Goneq}{{\sf G1c}}
\newcommand{\Gtwi}{{\sf G2ip}}
\newcommand{\Gtw}{{\sf G2cp}}
\newcommand{\Gth}{{\sf G3cp}}
\newcommand{\Gthi}{{\sf G3ip}}

\newcommand{\Gthq}{{\sf G3c}}
\newcommand{\Gthqi}{{\sf G3i}}
\newcommand{\Gdyc}{{\sf G4ip}}
\newcommand{\GdycLL}{{\sf G4LL}}

\newcommand{\NDq}{{\sf ND}}
\newcommand{\NDqi}{{\sf NDi}}

\newcommand{\FLe}{\ensuremath{\sf FL_e}}
\newcommand{\CFLe}{\ensuremath{\sf CFL_e}}

\newcommand{\GthK}{{\sf G3K}}

\newcommand{\GthKD}{{\sf G3KD}}

\newcommand{\GthSf}{{\sf G3S4}}
\newcommand{\GthGL}{{\sf G3GL}}

\newcommand{\DY}{{\sf G4ip}}

\newcommand{\G}{{\sf G}}


\newcommand{\rsch}{{\EuScript R}}

\newcommand{\lgc}{{\sf L}}

\newcommand{\lang}{\ensuremath {{\EuScript L}}}

\newcommand{\defn}{\equiv _{\mbox{\em \tiny df}}} 
\newcommand{\af}{\vdash}
\newcommand{\adm}{\makebox{\raisebox{.4ex}{\scriptsize $\ \mid$}\raisebox{.28ex}{\footnotesize $\! \sim \,$}}} 

\newcommand{\itm}{\item[$\circ$]}

\newcommand{\imp}{\rightarrow} 
\newcommand{\Imp}{\ \Rightarrow\ }
\newcommand{\Ifff}{\ \Leftrightarrow\ }
\newcommand{\en}{\wedge} 
\newcommand{\of}{\vee} 
\newcommand{\ifff}{\leftrightarrow}
\newcommand{\E}{\exists}
\newcommand{\A}{\forall} 
\newcommand{\bx}{\raisebox{.1mm}{$\Box$}}

\newcommand{\dm}{\Diamond}
\newcommand{\mdl}{\raisebox{.1mm}{$\ocircle$}} 
\newcommand{\bof}{\bigvee}
\newcommand{\ben}{\bigwedge}
\newcommand{\seq}{\Rightarrow}

\newcommand{\sml}{<}

\newcommand{\lngl}{\langle\,}
\newcommand{\rngl}{\,\rangle}
 
\newcommand{\ov}[1]{\overline{#1}}

\newcommand{\Ap}{\forall \hspace{-.1mm}p\hspace{.2mm}} 
\newcommand{\Ep}{\exists \hspace{-.1mm}p\hspace{.2mm}}

\newcommand{\Apar}{\forall^\circ \hspace{-.1mm}p\hspace{.2mm}} 
\newcommand{\Epar}{\exists^\circ \hspace{-.1mm}p\hspace{.2mm}}

\newcommand{\Apall}{\forall\hspace{-.1mm}p_1\dots p_n}
\newcommand{\Epall}{\exists\hspace{-.1mm}p_1\dots p_n}

\newcommand{\De}{\Delta}
\newcommand{\Ga}{\Gamma}

\newcommand{\Sig}{\Sigma}
\newcommand{\sig}{\sigma}

\newcommand{\upchi}{\raisebox{.4ex}{\mbox{$\chi$}}}
\renewcommand{\phi}{\varphi}

\newcommand{\cald}{{\EuScript D}}

\newcommand{\form}{{\cal F}}
\newcommand{\CL}{\EuScript{C}}
\newcommand{\SCL}{\EuScript{C'}}
\newcommand{\PL}{\EuScript{P}}
\newcommand{\SC}{\EuScript{S}}
\newcommand{\CKSC}{\EuScript{CK}}

\newtheorem{Theor}{Theorem}[section]
\newenvironment{theorem}{\begin{Theor}\rm }{\end{Theor}}
\newtheorem{Lemma}{Lemma}[section]
\newenvironment{lemma}{\begin{Lemma}\rm }{\end{Lemma}}
\newtheorem{Coro}{Corollary}[section]
\newenvironment{corollary}{\begin{Coro}\rm }{\end{Coro}}
\newtheorem{Remark}{Remark}[section]
\newenvironment{remark}{\begin{Remark}\rm }{\end{Remark}}
\newtheorem{Claim}{Claim} [section]

\newtheorem{defin}{Definition}[section]
\newenvironment{definition}{\begin{defin}\rm }{\end{defin}}
\newtheorem{exam}{Example}[section]
\newenvironment{example}{\begin{exam}\rm }{\end{exam}}
\newenvironment{proof}{{\bf Proof}}{\hfill $\slot$}
\newcommand{\slot}{\hfill \mbox{$\dashv$}}

\numberwithin{figure}{section}

\parindent0pt\parskip4pt

\begin{document}

\title{Universal Proof Theory \\ \vskip7pt {\large TACL 2022 Lecture Notes} } 
\vskip5pt 
\author{
Rosalie Iemhoff$^*$ and Raheleh Jalali\footnote{Utrecht University, the Netherlands, University of Bath, The UK, r.iemhoff@uu.nl, rahele.jalali@gmail.com. Support by the Netherlands Organisation for Scientific Research under grant 639.073.807 as well as by the EU H2020-MSCA-RISE-2020 Project 101007627 is gratefully acknowledged.} }

\maketitle

%
%

\tableofcontents

\clearpage

\section{Origins of Proof Theory}\label{Sec: Origins}
The story of modern proof theory is usually tied to the renowned mathematician David Hilbert (1862-1943), who initiated what is known as {\it Hilbert's Program} in the foundations of mathematics. Following the discovery of set-theoretic paradoxes, such as Russell's paradox, concerns and uncertainties arose regarding the consistency of various branches of mathematics.
In reaction to these emerging doubts at the beginning of the 20th century, Hilbert put forward a way to provide mathematics with a secure basis by proposing to formalize mathematics in an axiomatic theory whose consistency can be proved by finitary means. Such an achievement would clearly establish the consistency of mathematics once and for all; this solution would put the nagging doubts about the correctness of our abstract mathematical constructions to rest.
To elucidate, Hilbert's proposal was to prove the consistency of more complicated systems in terms of simpler systems. Ultimately, the consistency of all of mathematics could be reduced to basic arithmetic.


As we know, G\"odel's Incompleteness Theorems, only a decade after its birth, put an end to Hilbert's program, at least to the program in its strict sense. To understand G\"odel's results, suppose a consistent “strong enough” system is given. G\"odel's first theorem states that such a system can never be complete. Consequently, it is not possible to formalize all mathematical true
statements within a formal system.
G\"odel's second theorem states that such a system cannot prove its own consistency. Accordingly, Hilbert’s assumption that a finitistic system can be used to prove the consistency of itself and more powerful theories, such as set theory, is refuted. To put it another way, if Peano Arithmetic \PA\ is not able to prove its own consistency, how could a finitary theory, much weaker than \PA, be able to prove it? This shows that, at least for theories to which G\"odel's Incompleteness Theorems apply, Hilbert's aims are unattainable. Still, it is not to be excluded that there are other finitary means available to prove the consistency of \PA; means that are not formalizable in \PA. But the general opinion is that Hilbert's Program, in its strict sense, did not survive G\"odel's famous results. 

But there is something that did survive, namely the study of formal proofs and proof systems, and their use in the foundations of mathematics, as well as in numerous other disciplines, such as computer science, philosophy, and linguistics. 
Someone who has been very important for this survival is Gerhard Gentzen. In the 1930s, Gentzen showed that although \PA\ cannot prove the consistency of \PA, an extension of it by the (suitably coded) statement that the ordinal $\varepsilon_0$ is well-founded\footnote{$\varepsilon_0$ is the ordinal $\omega^{\omega^{\omega^{\iddots}}}$ i.e.\ an $\omega$ tower of $\omega$'s.} does prove that \PA\ is consistent. That Gentzen's proof is not finitary is clear, and in how far one considers Gentzen's theorem as an undoubted proof of the consistency of \PA\ depends on how obvious or undoubted one considers the well-foundedness of $\varepsilon_0$ to be. Gentzen's theorem and in particular the methods and proof systems that he developed, form the beginning of the systematic study of formal proofs.
This field of analyzing proofs, nowadays called {\it Proof Theory}, is large and has connections to almost all other areas in logic and applications in many others, for instance in theorem proving and logic programming. One therefore could say that Proof Theory is one of the most successful programs based on a wrong idea. 

The subject of these short lecture notes is a recently emerging area within proof theory, called {\it Universal Proof Theory}. There are at least three fundamental problems in this area worth investigating. The first one is the \emph{existence problem}. Here one is concerned with the existence and nonexistence of good (or useful or applicable or \dots) proof systems. The second problem is the \emph{equivalence problem}, where one is concerned with investigating natural notions of equivalence of different proof systems. This problem can be regarded as addressing Hilbert’s twenty fourth problem of studying the equivalence of different mathematical proofs. The third problem is the \emph{characterization problem}, where one characterizes proof systems via the equivalence relation mentioned before. In these notes, we focus on the existence problem, as the initial step of this evolving area. In the next section, Section~\ref{sec:flavours}, other areas in Proof Theory are briefly discussed and the idea of Universal Proof Theory is introduced. Sections~\ref{sec:logics}, \ref{sec: Hilbert}, \ref{sec:nd}, and \ref{sec:seqcal} introduce the logics and proof systems that these notes are about. In Section~\ref{sec:exiseqcal} one of the main methods of Universal Proof Theory is developed and its results for various classes of logics and proof systems are discussed. Section~\ref{sec:positive} contains nice but unintended consequences of the method, and Section~\ref{sec:otherprop} is about a related but different method within Universal Proof Theory.  

Due to lack of space most topics in these notes are only touched upon briefly and most theorems are given without proof. But ample references are provided for those who wish to know more. The authors wish to mention Amirhossein Akbar Tabatabai as someone whose ideas and actions have greatly helped shape the project summarized in these notes.

\section{Flavours of Proof Theory}
 \label{sec:flavours}
Looking at the field of Proof Theory today one can distinguish several lines of research. These lines are all concerned with formal proofs and proof systems, but with quite different aims and focussing, not exclusively but in the main, on quite different theories. The areas are not disjoint, but it can be worthwhile to determine them and place proof-theoretic results in one or more of these areas.  

What is often called {\it Classical Proof Theory} is a continuation of Hilbert's Program. It is concerned with the relative consistency and strength of foundational theories and fragments thereof, in line with the early consistency result by Gentzen mentioned above.
Set theories and theories of arithmetic are among the main theories that are investigated. These investigations are focused on the ordinal analysis of such theories, their strength with respect to a weak basic theory, as in reverse mathematics,  
or their relative strength via reductions, what \cite{prawitz74} calls {\it Reductive Proof Theory}.

\cite{prawitz74} contrasts this with {\it General Proof Theory} where proofs, and consequence relations, are the main topic of study, in their own right. This naturally leads to more fine-grained proof systems, since what is provable is less the issue than the nature and structure of proofs. 

{\it Structural Proof Theory} received its name in \cite{negri&vonplato01} and is concerned with the combinatorial analysis of the structure of proofs, and its main methodologies are cut elimination and normalization. Structural Proof Theory is very close in spirit to General Proof Theory. However, it focuses more on the proof systems that Gentzen developed very early on. As a result of their beauty and the insight that they provide in proofs, Gentzen's systems form the inspiration for many proof systems that are investigated and applied today.

\subsection{Universal Proof Theory}
Universal proof theory is concerned with classes of logics and the variety of proof systems for them. Rather than on foundational theories, its focus is more on the numerous logics that occur in computer science, philosophy, mathematics, and linguistics. These logics come with their own questions. 
Here consistency is not an issue but other questions become important. For example:  
the complexity of the decidability of these logics, (not) having interpolation, and the analyticity of the proofs. Foundational theories also occur as objects of study, but the aspects that one is interested in are mostly of a logical nature and are shared with many theories, such as the tautologies or the admissible rules of a theory. But a large part of the research is about logics: epistemic logics, intermediate logics, modal logics, substructural logics, temporal logics, and so on. 

These logics are mostly decidable, and many of the well-known ones have sophisticated proof systems, such as sequent calculi (Section~\ref{sec:seqcal}). Such proof systems can be used to prove properties about the logics and to compare them. Proof systems that stand out in this respect and at the same are close to actual logical reasoning are Natural Deduction and Sequent Calculi, both introduced by \cite{gentzen35}. 
These two kinds of proof systems are closely related and both elegant and natural. 

In the introduction, we described three of the main aims of Universal Proof Theory. In these notes we focus on the first one, the \emph{existence problem}, which is discussed in detail in 
Section~\ref{sec:exiseqcal}. The general idea is simple.
Namely, to establish, ideally for large classes of logics, whether these logics have good proof systems. Of course, such a question 
is fully determined only 
if we specify what are the proof systems we are considering and when such a system is called ``good''. As will be clear below, the proof systems that we consider are sequent calculi, introduced in Section~\ref{sec:seqcal}, but the definition of the property ``good'', which depends on the context, is only provided in Section~\ref{sec:exiseqcal}. It suffices here to say that many of the cut-free sequent calculi that are in use today have, or almost have that property.

The area originated from the papers \citep{iemhoff16a,iemhoff17b}, and was developed further in the papers \citep{jalali&tabatabai18a,jalali&tabatabai18b}, in which it also received its name. A recent significant extension of the main methods has been obtained in \citep{jalali&tabatabai22}. Other papers in the area are mentioned in Section~\ref{sec:exiseqcal}. Before the proper introduction of Universal Proof Theory in Section~\ref{sec:exiseqcal}, first the logics and proof systems that we focus on in these notes are introduced in Sections~\ref{sec:logics}, \ref{sec: Hilbert}, \ref{sec:nd}, and \ref{sec:seqcal}.

We would like to thank the anonymous reviewer, Mai Gehrke, and Iris van der Giessen for their helpful comments.

\section{The Logics}
 \label{sec:logics}
We start with some definitions and notations used in the rest of the paper. As we will consider various logics, we have to specify the language in each case. We start with propositional logics. The languages $\lang$ of the propositional logics that we consider contain \emph{propositional variables} (or \emph{atoms}), denoted by small Roman letters $p,q,r,\dots$, the \emph{constants} $\top$ and $\bot$, and a set of \emph{logical operators} which varies depending on the logic in use. For example, in the case of intermediate propositional logics the logical operators are the connectives $\en,\of,\imp$; for the propositional modal logic \K\ the logical operators consist of the connectives $\en,\of,\imp$ and the modal operator $\bx$. \emph{Formulas} are defined in the usual way and we denote them by small Greek letters $\phi, \psi, \dots$ or sometimes capital Roman letters $A, B, \dots$. A \emph{subformula} of a formula $\phi$ is defined as follows: $\phi$ is a subformula of $\phi$; if $\psi \circ \theta$ is a subformula of $\phi$, then so are $\psi$ and $\theta$ for $\circ \in \{\wedge, \vee, \to\}$; if $\Box \psi$ is a subformula of $\phi$, then so is $\psi$. \emph{Negation} is defined as $\neg\phi \defn\ \phi\imp\bot$. The set of formulas in $\lang$ is denoted by $\form_\lang$. A \emph{multiset} of formulas is a collection of formulas where the order of the formulas does not matter but their multiplicity does. We use a \emph{multiset variable} (or \emph{context}) to refer to a generic multiset, i.e., a variable that can be substituted by an arbitrary multiset of formulas.
Capital Greek letters $\Gamma, \Delta, \dots$ denote multisets of formulas or multiset variables, and it will be always clear which one we are referring to. Later in this chapter predicate logics are briefly mentioned, whose languages do not fall under the above definition. The definition of the language $\lang$ for these cases, as well as conditional logics, lax logic, and intuitionistic modal logics, is specified in their corresponding sections.

A {\it substitution} for $\lang$ is a function from $\form_\lang$ to $\form_\lang$ that is the identity function on constants, maps every atom to a formula in $\form_\lang$, and commutes with the logical operators. Given a formula $\phi$, by $\sig\phi$ we mean the image of $\phi$ under $\sig$. Note that if $p_1, \dots,p_n$ are all the atoms that occur in the formula $\phi(p_,\dots,p_n)$, then we have $\sig\big(\phi(p_,\dots,p_n)\big)=\phi(\sig p_1,\dots,\sig p_n)$. By a \emph{logic} we mean a set of formulas closed under substitution. We say the logic $\lgc'$ \emph{extends} the logic $\lgc$ when $\lgc \subseteq \lgc'$. If a formula $\phi$ is in the logic $L$, we denote it by $\vdash_L \phi$ and if it is clear from the context which logic we are talking about, we simply write $\vdash \phi$.
It is worth noting that the more faithful formalization of logical inference is a consequence relation, namely a relation on the formulas in the language. However, for this paper the simpler notion, i.e., identifying a logic with a set of formulas (namely its tautologies), suffices.

The logics that occur in these notes (in Sections~\ref{sec:exiseqcal} and \ref{sec:positive}) are either intermediate (i.e., extensions of the intuitionistic propositional logic), normal modal, intuitionistic modal, non-normal modal, conditional, substructural, or are extensions of substructural logics by modal principles. 
The proof systems mostly used in Universal Proof Theory, and thus in these notes, are sequent calculi, which are introduced in Section~\ref{sec:seqcal}. To put this kind of proof systems in context and to be able to appreciate their special qualities we also introduce in Sections~\ref{sec: Hilbert} and \ref{sec:nd} two other kinds of proof systems, natural deduction and Hilbert systems. Together with sequent calculi, these are three of the main types of proof systems used in Proof Theory today. We introduce natural deduction proof systems and sequent calculi for Classical Propositional Logic, \CPC, and Intuitionistic Propositional Logic, \IPC, and their predicate versions Classical Predicate Logic, \CQC, and Intuitionistic Predicate Logic, \IQC. For reasons of space, we introduce Hilbert systems only for the propositional logics \CPC\ and \IPC. After the introduction of these proof systems we will, in Section~\ref{sec:exiseqcal}, turn to the \emph{existence problem} of Universal Proof Theory which, as mentioned in the introduction, is the main topic of these notes. 

\section{Hilbert Systems}
 \label{sec: Hilbert}
Considering systems of formal deduction, there are three well-known types:
Hilbert system (Hilbert calculus), natural deduction, and sequent calculus (or Gentzen's calculus). We start with introducing Hilbert systems briefly. 

A Hilbert system is an axiomatization with axioms and rules of inference.  The Hilbert system $\mathsf{HJ}$ for $\mathsf{IPC}$ has the following axioms:
    \begin{enumerate}
        \item 
        $\phi \rightarrow(\psi \rightarrow \phi)$
        \item
        $(\phi \rightarrow(\psi \rightarrow \theta)) \rightarrow((\phi \rightarrow \psi) \rightarrow(\phi \rightarrow \theta))$
        \item
        $\phi \rightarrow \phi \vee \psi$
        \item
        $\psi \rightarrow \phi \vee \psi$
        \item
        $(\phi \rightarrow \theta) \rightarrow((\psi \rightarrow \theta) \rightarrow(\phi \vee \psi \rightarrow \theta))$
        \item
        $\phi \wedge \psi \rightarrow \phi$
        \item
        $\phi \wedge \psi \rightarrow \psi$
        \item
        $\phi \rightarrow(\psi \rightarrow(\phi \wedge \psi))$
        \item
        $\bot \to \phi$
    \end{enumerate}
    and the modus ponens rule
  \AxiomC{$\phi$}
 \AxiomC{$\phi \to \psi$}
 \RightLabel{$(mp)$}
 \BinaryInfC{$\psi$}
 \DisplayProof .
 
In most Hilbert systems, there is an asymmetry between the number of axioms and rules of inference, namely, there are several axioms and few rules of inference. Hilbert system $\mathsf{HK}$ for $\mathsf{CPC}$ is obtained by adding \emph{the law of double negation} $\neg \neg \phi \to \phi$, or equivalently \emph{the law of excluded middle} $\phi \vee \neg \phi$ to $\mathsf{HJ}$. 

By a \textit{proof} of $\phi$ from a set of assumptions $\Gamma$, we mean a sequence of formulas $\phi_1, \ldots, \phi_n$ such that $\phi_n=\phi$ and each $\phi_i$ is either an element of $\Gamma$, or is an instance of an axiom, or is derived from $\phi_j$ and $\phi_k$ for $j,k <i$ by the modus ponens rule.

  \begin{example}
Consider the following axioms:
\begin{center}
    $A \to (B \to A) \; (H1)$ \quad
    $(A \rightarrow(B \rightarrow C)) \rightarrow((A \rightarrow B) \rightarrow(A \rightarrow C)) \; (H2)$
\end{center}
The following is a proof of $A \to A$ using the axioms $(H1)$ and $(H2)$:
 
\begin{enumerate}
\item $A \to ((A \to A) \to A) \hspace{180pt}(H1)$
    \item
    $A \to ((A \to A) \to A) \to ((A \to (A \to A))\to (A \to A)) \hspace{35pt} (H2)$
    \item
    $(A \to (A \to A))\to (A \to A) \hspace{140pt} 1,2, mp$
    \item
    $A \to (A \to A) \hspace{215pt} (H1)$
    \item
    $A \to A \hspace{240pt} 3,4, mp$
\end{enumerate}
\end{example} 

Hilbert systems are not useful for proof search as explained below: Suppose we want to show that $\bot$ cannot be derived in $\mathsf{HJ}$. In other words, we want to show that $\mathsf{HJ}$ is consistent. Let $\bot$ be provable and $\phi_1, \ldots, \phi_n$ be its proof. Thus, $\phi_n=\bot$. As $\bot$ is not an axiom, the last step of the proof is $(mp)$. It means that there is a formula $\phi$ such that both $\phi$ and $\phi \to \bot$ are proved. Now, either $\phi$ is an axiom and we analyze the proof of $\phi \to \bot$, or $\phi$ is derived from $\psi$ and $\psi \to \phi$ using $(mp)$. Either $\psi$ is an axiom or it is derived via $(mp)$ and so on. In a backward proof search, each time that we reach $(mp)$, a new formula appears. 

It is worth mentioning that the Weakness of Hilbert systems is not on the positive side, i.e., in the sense of proving theorems. After proving some, and not many meta theorems, which is the hard part, we can prove theorems rather quickly. 
Some common meta theorems are:
\begin{itemize}
    \item 
     The deduction theorem: 
    $\Gamma, \phi \vdash \psi$ if and only if $\Gamma \vdash \phi \to \psi$
    \item
     Contraposition: If  $\Gamma, \phi \vdash \psi$ then $\Gamma, \neg \psi \vdash \neg \phi$
\end{itemize}

The problem appears when we want to show a formula is not provable. Then you have to consider all the proofs, and as we observed, guessing the structure of the proof is a very combinatorially complicated task.
In this sense, Gentzen observed that Hilbert systems are not useful in achieving the goal of proving consistency. Therefore, he introduced two other proof systems, natural deduction, and sequent calculi, to study proofs systematically. Being in a transparent context, they are regarded as elegant systems.
\section{Natural Deduction}
 \label{sec:nd}

When it comes to choosing proof systems, two main strategies exist depending on the trade-off between logical axioms and rules of inference. Hilbert-style proof systems include many schemes as logical axioms and as few rules of inference as possible. Natural deduction systems and sequent calculi take the opposite strategy, including many inference rules and few or no logical axioms.
Gentzen developed the proof systems {\it Natural Deduction} (\NDq) as a faithful representation of the logical reasoning used in mathematics. As opposed to Hilbert-style proof systems, in Natural Deduction, logical reasoning is close to ``natural" reasoning: starting from assumptions and reaching conclusions using inference rules. Like in ``real'' mathematics, derivations in \NDq\ contain assumptions and case distinctions. 

A {\it deduction} (or \emph{derivation}) in \NDq, usually denoted by $\cald$ or $\cald'$, is a tree whose nodes are labeled with formulas. A deduction is defined inductively as stated in the next paragraph. The formulas at the leaves of the tree (i.e., nodes at the top of a branch) are {\it assumptions} and the formula at the root (i.e., node at the bottom of the tree) is the \emph{conclusion}. Assumptions can be either {\it open} or {\it closed}. An assumption $\phi$ that is closed is depicted as $[\phi]$, usually superscripted by a symbol $a, b, \dots$ or by a natural number. The closure is indicated by writing the superscript of the formula at the inference rule.  When all assumptions in a deduction are closed, we call the deduction a  {\it proof} and the deduction tree a  {\it proof tree}.  

\emph{Deductions} are inductively defined: A single node with label $\phi$ is a deduction, where $\phi$ is an open assumption and there is no closed assumption. A given deduction can be extended according to the rules given in Figure~\ref{fig:ND}. In this way the systems \NDq\ for \CQC\ and \NDqi\ for \IQC\ are defined. Note the closure of assumptions by the rules $(I \to)$, $(E \vee)$, $(E \exists)$, and $(E_c\bot)$ in Figure~\ref{fig:ND}. The rule $(E_i\bot)$ is called the \emph{intuitionistic absurdity} rule.
The rule $(E_c\bot)$, sometimes also denoted by (RAA) in the literature, is called the \emph{classic absurdity} rule (reductio ad absurdum). This rule embodies proofs by contradiction: if
by assuming $\phi$ is false we can derive a contradiction, then $\phi$ must be true.

Recall that $\neg\psi$ is defined as $\psi\imp\bot$. The definition of deduction implies that the following are valid deductions:
\[ 
  \AxiomC{$\cald$}
 \noLine
 \UnaryInfC{$\neg \phi$}
  \AxiomC{$\cald'$}
 \noLine
 \UnaryInfC{$\phi$}
 \RightLabel{\footnotesize $E\!\imp$}
 \BinaryInfC{$\bot$}
 \DisplayProof
 \ \ \ \ 
 \AxiomC{$[\phi]^a$} \noLine
 \UnaryInfC{$\cald$} \noLine
 \UnaryInfC{$\bot$} 
 \RightLabel{\footnotesize $I\!\imp$}
 \LeftLabel{\footnotesize $a$}
 \UnaryInfC{$\neg\phi$}
 \DisplayProof
\]
When negation is considered as primitive in the language, then the two rules above are added to the proof system. 

To appreciate the somewhat complicated handling of assumptions and disjunction elimination, consider the following derivation in \NDq\ of the law of excluded middle:
\[
 \AxiomC{$[\neg (\phi \of \neg \phi)]^a$}
 \AxiomC{$[\phi]^b$}
 \RightLabel{\footnotesize $I\of$}
 \UnaryInfC{$\phi \of \neg \phi$}
 \RightLabel{\footnotesize $E\!\imp$}
 \UnaryInfC{$\bot$}
 \RightLabel{\footnotesize $I\!\imp$}
 \LeftLabel{\footnotesize $b$}
 \UnaryInfC{$\neg \phi$}
 \RightLabel{\footnotesize $I\of$}
 \UnaryInfC{$\phi \of \neg \phi$}
 \RightLabel{\footnotesize $E\!\imp$}
 \BinaryInfC{$\bot$}
 \RightLabel{\footnotesize $E_c\bot$}
 \LeftLabel{\footnotesize $a$}
 \UnaryInfC{$\phi \of \neg \phi$}
 \DisplayProof
\]
Let us explain how this proof tree provides a proof of the excluded middle: For a given formula $\phi$, we want to prove $\phi \vee \neg \phi$. We start by assuming $\phi$ and $\neg (\phi \vee \neg \phi)$. Using the assumption $\phi$ and following the rules of Figure \ref{fig:ND} we reach $\phi \vee \neg \phi$. Now, applying the rule $E \to$ on this formula and the assumption $\neg (\phi \vee \neg \phi)$ we reach the contradiction. Finally, applying the rule $E_c\bot$ we get $\phi \vee \neg \phi$, as required. Note that the assumptions $\phi$ and $\neg (\phi \vee \neg \phi)$ are closed when the rules $I \to$ and $E_c\bot$ are applied, respectively.

Note that the derivation of the law of excluded middle does not go through for \NDqi, as it should, since it is not valid in intuitionistic logic. 

Let us revisit Hilbert systems and compare the two proof systems we have introduced so far. As mentioned in Section \ref{sec: Hilbert}, Hilbert systems are not useful for proof search and it is difficult to use
them for reasoning about reasoning. Moreover, they do not convey the meaning of the logical symbols. On the other hand, in natural deduction, the introduction and elimination rules are in harmony and the rules convey the meaning of the logical symbols. However, proof search is still difficult as there may be many possibilities. For instance, there are different ways to prove $\phi \to \phi$ in \NDq:
\begin{center}
 \begin{tabular}{c c} 
 \AxiomC{$[\phi]^{a}$}
  \AxiomC{$[\phi]^{a}$}
   \RightLabel{\footnotesize $I\wedge$}
   \BinaryInfC{$\phi \wedge \phi$}
     \RightLabel{\footnotesize $E\wedge$}
 \UnaryInfC{$\phi$}
 \RightLabel{\footnotesize $I \! \to$}
   \LeftLabel{\footnotesize $a$}
  \normalsize \UnaryInfC{$\phi \to \phi$}
 \DisplayProof
 & \hspace{20pt}
  \AxiomC{$[\phi]^{a}$}
  \LeftLabel{\footnotesize $a$}
  \RightLabel{\footnotesize $I\! \to$} 
  \normalsize  \UnaryInfC{$\phi \to \phi$}
 \DisplayProof
\end{tabular}
\end{center}
The goal of introducing natural deduction was to prove the consistency of propositional logic. Let us analyze the potential scenarios: Suppose $\bot$ is proved in $\mathsf{NJ}$. Considering the last rule applied in the proof, there are several possibilities: the last rule may be $(E\to)$ and $\phi$ and $\phi \to \bot$ are proved. Or the last rule can be $(E\wedge)$ and $\phi \wedge \bot$ is proved, and many more options. This is more complicated than the situation in Hilbert systems. There we only had to deal with $(mp)$, while in natural deduction there are several possibilities. To overcome this problem Gentzen proposed to avoid redundancies or detours in a proof, which led to \emph{normalization}.

\subsection{Normalization}
Although natural deduction indeed seems to capture part of the logical reasoning in mathematical proofs, as a proof system it has some drawbacks, as observed earlier. One is that it allows for derivations with unnecessary detours, derivations in which formulas that are introduced are eliminated afterward. For example, it allows derivations that  contain the following detours\footnote{We restrict ourselves to the propositional part of \NDq, but the observations apply to the quantifiers as well.}:
\[\small 
 \begin{array}{ccc}
 \AxiomC{$\cald_1$} \noLine
 \UnaryInfC{$\phi_1$} 
 \AxiomC{$\cald_2$} \noLine
 \UnaryInfC{$\phi_2$} 
 \RightLabel{\footnotesize $I\en$}
 \BinaryInfC{$\phi_1 \en \phi_2$}
 \RightLabel{\footnotesize $E\en$}
 \UnaryInfC{$\phi_i$}
 \DisplayProof
  & 
 \AxiomC{$\cald$} \noLine
 \UnaryInfC{$\phi$} 
 \RightLabel{\footnotesize $I\of$}
 \UnaryInfC{$\phi \of\psi$} 
 \AxiomC{$[\phi]^a$} \noLine
 \UnaryInfC{$\cald_1$} \noLine
 \UnaryInfC{$\upchi$}
 \AxiomC{$[\psi]^b$} \noLine
 \UnaryInfC{$\cald_2$} \noLine
 \UnaryInfC{$\upchi$}
 \LeftLabel{\footnotesize $a,b$}
 \RightLabel{\footnotesize $E\of$}
 \TrinaryInfC{$\upchi$}
 \DisplayProof
 & 
 \AxiomC{$[\phi]^a$} \noLine
 \UnaryInfC{$\cald$} \noLine
 \UnaryInfC{$\psi$}
 \LeftLabel{\footnotesize $a$}
 \RightLabel{\footnotesize $I\!\imp$}
 \UnaryInfC{$\phi \imp \psi$}
 \AxiomC{$\cald'$} \noLine
 \UnaryInfC{$\phi$}
 \BinaryInfC{$\psi$}
 \DisplayProof 
 \\
 \text{conjunction} & \text{disjunction} & \text{implication} 
 \end{array}
\]
Clearly, these derivations can be contracted to derivations without that detour: 
\[\small 
 \begin{array}{ccccc}
 \AxiomC{$\cald_i$} \noLine
 \UnaryInfC{$\phi_i$} 
 \DisplayProof
 & & 
 \AxiomC{$\cald$} \noLine
 \UnaryInfC{$\phi$} \noLine
 \UnaryInfC{$\cald_1$} \noLine
 \UnaryInfC{$\upchi$}
 \DisplayProof
 & &
 \AxiomC{$\cald'$} \noLine
 \UnaryInfC{$\phi$} \noLine
 \UnaryInfC{$\cald$} \noLine
 \UnaryInfC{$\psi$}
 \DisplayProof 
 \\
 \text{conjunction} & & \text{disjunction} & & \text{implication} 
 \end{array}
\]
The detours can be a bit more intricate as well, such as: 
\[\small 
 \AxiomC{$[\phi \of \psi]^a$}
 \AxiomC{$[\phi]^b$}
 \RightLabel{\footnotesize $I\of$}
 \UnaryInfC{$\phi \of \psi$}
 \AxiomC{$[\psi]^c$}
 \RightLabel{\footnotesize $I\of$}
 \UnaryInfC{$\phi \of \psi$}
 \LeftLabel{\footnotesize $b,c$}
 \RightLabel{\footnotesize $E\of$}
 \TrinaryInfC{$\phi \of \psi$}
 \LeftLabel{\footnotesize $a$}
 \RightLabel{\footnotesize $I\!\imp$}
 \UnaryInfC{$\phi \of \psi \imp \phi \of \psi$}
 \DisplayProof
 \ \ \ \ \ 
 \AxiomC{$[\phi \of \psi]^a$}
 \LeftLabel{\footnotesize $a$}
 \RightLabel{\footnotesize $I\!\imp$}
 \UnaryInfC{$\phi \of \psi \imp \phi \of \psi$}
 \DisplayProof
\]
Both are proofs of the formula $\phi \vee \psi \to \phi \vee \psi$, but the leftmost proof has redundancies.

\emph{Normalization} is omitting all the redundancies in a proof and the result is a \emph{normal} proof. It is not trivial to see that this task is possible for any given proof. It may be the case that two introduction elimination rules do not occur right after each other, but in the steps of omitting the redundancies, these two rules may occur after one another. Hence, new redundancies may appear.
However, one can show, in both \NDq\ and \NDqi, that every derivable formula has a proof in {\it normal form}. 

\begin{theorem} \label{thm: normalization}
Every formula provable in \NDq\ (\NDqi) has a proof in \NDq\ (\NDqi) in normal form. 
\end{theorem}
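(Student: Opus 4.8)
The plan is to prove the theorem by defining \emph{reduction steps} that contract the redundancies (detours) exhibited above, and then showing that any sequence of such reductions can be driven to termination, the endpoint being by definition a normal proof. First I would make the notion of redundancy precise. Call a formula occurrence a \emph{maximal formula} if it is simultaneously the conclusion of an introduction rule and the major premise of an elimination rule for its principal connective; these are exactly the detours of conjunction-, disjunction-, and implication-type displayed above (together with the analogous ones for $\A$, $\E$, and $\bot$). Because the rules $(E\of)$ and $(E\E)$ pass a side formula unchanged into their conclusion, I also need the notion of a \emph{maximal segment}: a sequence of occurrences of one and the same formula, all but the last being minor premises of $(E\of)$ or $(E\E)$, which begins with an introduction and ends as the major premise of an elimination. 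A proof is \emph{normal} precisely when it contains no maximal segment.

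Second, I would fix the set of conversions. For the proper detours these are exactly the contractions drawn above: an $I$-rule immediately followed by the matching $E$-rule is replaced by the relevant subderivation (for $\imp$ and $\A$ this involves substituting the subderivation of the minor premise into every discharged-assumption leaf). In addition I need \emph{permutation conversions} for $(E\of)$, $(E\E)$ (and $(E_i\bot)$), which push an elimination standing below such a rule up into its branches; these are what allow a hidden detour, created only after a disjunction elimination has been permuted away, to surface.

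Third—and this is the heart—I would prove termination by a measure argument. Assign to each maximal segment its \emph{degree}, the number of connectives in its formula, and to a proof $\cald$ the pair $(d,n)$, where $d$ is the highest degree of a maximal segment in $\cald$ and $n$ is the number of maximal segments of that degree; order these pairs lexicographically, which is well-founded. The key lemma is that a non-normal proof always admits a conversion strictly lowering $(d,n)$. To obtain it I would select a maximal segment of degree $d$ that is \emph{suitable}, i.e.\ such that no maximal segment of degree $d$ lies above it, and convert that one. A proper reduction then deletes a segment of degree $d$, while any newly created maximal formula has a proper subformula of the converted formula as its type, hence degree $< d$; the suitability condition guarantees that the duplication inherent in the $\imp$- and $\A$-conversions copies only material free of degree-$d$ segments, so $n$ strictly decreases (or $d$ drops). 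Permutation conversions are handled by a companion measure (e.g.\ segment length) so that they too cannot loop. Well-founded induction on $(d,n)$ then yields a normal proof, and the argument is uniform for \NDq\ and \NDqi, the only difference being that $(E_c\bot)$ is present in the former and absent in the latter.

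The step I expect to be the main obstacle is exactly this measure-decrease lemma in the presence of the $\imp$- and $\A$-conversions: contracting such a detour substitutes the subderivation of the minor premise into possibly many leaves, which can \emph{multiply} the maximal segments occurring in that subderivation and, a priori, create new detours of high degree—precisely the phenomenon flagged before the theorem, that contracting one redundancy may spawn others. The whole weight of the proof rests on choosing the redex so that this duplication is harmless, namely the topmost maximal segment of highest degree, together with the bookkeeping showing that permutation conversions interleave with the proper ones without resetting the measure. If one wanted the stronger statement that \emph{every} reduction sequence terminates, I would instead run the Tait--Prawitz method of computability/reducibility predicates; but for the existence of a normal form claimed here the measure argument suffices.
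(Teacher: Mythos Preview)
Your sketch is essentially the Prawitz strategy, and for \NDqi\ it is correct and complete in outline. Note, however, that the paper does not give a self-contained proof of this theorem at all: it states it and then, in the following paragraph, points to the literature---for \NDqi\ via the translation into \LJ\ together with cut-elimination (Gentzen's own route), and for \NDq\ via Prawitz's direct normalization. So your proposal is not a different approach so much as a fleshing-out of one of the two references the paper invokes.

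There is one genuine soft spot. You say the argument is ``uniform for \NDq\ and \NDqi, the only difference being that $(E_c\bot)$ is present in the former and absent in the latter.'' This understates the issue. Your list of conversions covers (i) introduction--elimination detours and (ii) permutation conversions for $(E\of)$, $(E\E)$, $(E_i\bot)$; but $(E_c\bot)$ fits into neither category. When $(E_c\bot)$ concludes a compound formula that is then eliminated, you need a separate family of reductions (pushing $(E_c\bot)$ down to atomic conclusions), and these interact nontrivially with the permutation conversions for $\of$ and $\E$. The paper itself flags exactly this: Prawitz's 1965 proof for \NDq\ is carried out only for the language \emph{without} disjunction and the existential quantifier. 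Normalization for full classical \NDq\ with $\of$ and $\E$ came later and requires more delicate bookkeeping. So for \NDqi\ your plan is fine; for \NDq\ you should either restrict the language as Prawitz did, or say explicitly which $(E_c\bot)$-reductions you add and why your measure still decreases under them.
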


Suppose we have proved the normalization theorem, Theorem \ref{thm: normalization}, i.e., it is possible
to make any proof normal. So, we are sure that if a formula is
provable, there exists a finite and algorithmic process that takes a
proof and makes it normal.
Is it possible to prove the consistency now? The following theorem helps us find out:
\begin{theorem}
    Every normal proof, without any open assumptions, ends with an introduction rule.
\end{theorem}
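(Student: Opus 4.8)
The plan is to read off the last rule of the proof from the shape of the \emph{principal branch}, i.e.\ the branch of the proof tree whose bottom formula is the conclusion, and to exploit the rigid structure that normality forces on such branches.

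First I would isolate the structural fact underlying the whole normalization story: along any branch of a normal derivation---a sequence of formula occurrences descending through the tree, always passing through the \emph{major} premise whenever the rule below it is an elimination---the eliminations all come before the introductions. Concretely, the branch splits into an initial \emph{elimination part}, in which every step is an elimination rule with the branch formula as major premise, followed by an \emph{introduction part} consisting only of introduction rules. This is immediate from the definition of a normal proof: a formula that is simultaneously the conclusion of an introduction and the major premise of an elimination is exactly one of the detours removed in the proof of Theorem~\ref{thm: normalization}, so in a normal proof an introduction can never be immediately followed, on a branch, by an elimination taking its conclusion as major premise.

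Next I would apply this to the principal branch, whose final step is precisely the last rule of the proof. If its introduction part is nonempty, that last step is an introduction rule and we are done. So suppose toward a contradiction that the principal branch lies entirely in its elimination part, i.e.\ the last rule is an elimination. Tracing the branch upward through the major premises of these eliminations, I reach its topmost formula occurrence, which, being a leaf, is an assumption, say $A_0$. Since the proof has no open assumptions, $A_0$ must be discharged somewhere below it. But discharge is performed only by $(I\imp)$---an introduction rule, hence impossible on an all-elimination branch---or by $(E\of)$ and $(E\E)$, which discharge the assumptions standing at the tops of the tracks feeding their \emph{minor} premises. As $A_0$ reaches the conclusion along major premises only, its occurrence never enters a minor-premise subderivation, so it cannot be discharged by $(E\of)$ or $(E\E)$ either. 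Hence $A_0$ is open, contradicting the hypothesis, and the last rule must be an introduction. Taking the conclusion to be $\bot$, for which no introduction rule exists, then yields the consistency of $\mathsf{NJ}$ promised above.

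The main obstacle is the classical absurdity rule $(E_c\bot)$. Unlike the intuitionistic rules it both discharges an assumption and delivers an \emph{arbitrary} conclusion that may then serve as the major premise of an elimination, so it does not respect the elimination-before-introduction split; indeed the normal derivation of $\phi\of\neg\phi$ displayed earlier ends in $(E_c\bot)$, showing that the theorem as stated fails for the full classical system $\NDq$. The usual remedy, which I would adopt, is to pass to the refined classical normal form in which $(E_c\bot)$ is restricted to atomic conclusions (and to $\bot$); one then re-examines whether the top assumption of an all-elimination principal branch can be discharged by such a restricted instance, and verifies in particular that the case needed for consistency, conclusion $\bot$, is unaffected. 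For the purely intuitionistic system $\NDqi$ the rule is absent and the argument above applies verbatim.
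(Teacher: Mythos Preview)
Your argument is correct for the intuitionistic system \NDqi, and you are right that the literal statement fails for \NDq\ because of $(E_c\bot)$---the paper's sketch is tacitly about $\mathsf{NJ}$, as the surrounding text makes clear. But your route is genuinely different from the paper's. The paper argues by minimal counterexample: take a \emph{shortest} closed normal proof ending in an elimination; the subderivation of its major premise is still closed (the rules $E\en$, $E\!\imp$, $E\A$, $E_i\bot$ discharge nothing, and $E\of$, $E\E$ discharge only in their minor premises), so by minimality that subderivation ends in an introduction, which is precisely a detour---contradiction. You instead run the Prawitz-style track analysis: follow the principal branch upward through major premises, observe that normality forces every step on it to be an elimination, and then argue that the assumption sitting at the top cannot be discharged by any rule on that (all-elimination, all-major-premise) path. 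The paper's argument is shorter and needs only the local ``no maximum formula'' condition; yours invokes the global elimination-then-introduction shape of tracks, but in return it makes that structural lemma explicit, which is exactly what one wants for the subformula property and similar corollaries. Your diagnosis of the $(E_c\bot)$ obstruction and the standard repair (restricting it to atomic conclusions) is a worthwhile addition that the paper's sketch leaves implicit.
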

\begin{proof} (Proof sketch)
Suppose on the contrary that there are normal proofs ending with an elimination rule. Take the shortest such proof. We investigate all the possibilities of elimination rules in the last step. For instance, if it is $(E \wedge)$, then as the proof above, leading to this formula is shorter, it must end with an introduction rule. However, this is not possible because it violates the proof of being normal (i.e., an elimination rule occurring right after an introduction rule).
\end{proof}

Proof of consistency, as well as the proof of the disjunction property in intuitionistic logic, is an immediate consequence of the above theorem. 

Without defining the notion of normalization in detail, let us quote \cite{prawitz71}, page 249: {\it A normal derivation has quite a perspicuous form: it contains two parts, one analytical part in which the assumptions are broken down in their components by use of the elimination rules, and one synthetical part in which the final components obtained in the analytical part are put together by use of the introduction rules.} 

For \NDqi\ normalization follows from the translations between \NDqi\ and Gentzen's \LJ\ and the cut-elimination theorem for the latter. In fact, Gentzen's development of the sequent calculus is a result of his search for a proof of normalization, for both \NDq\ and \NDqi. For \NDq\ it was proved only after Gentzen. By now there have appeared various proofs of normalization for \NDq, the constructive normalization method by \cite{prawitz65}, who considered the language without disjunction and existential quantifier, is often used in the literature. 

What is remarkable is that what initially seemed a by-product, the sequent calculus, has become one of the most studied and applied class of proof systems in logic.

\subsection{The Curry-Howard Isomorphism}
The Curry-Howard isomorphism (also known as the Curry–Howard correspondence, or the proofs-as-programs and propositions-as-types interpretation) stands as a noteworthy connection between two seemingly disparate realms: intuitionistic natural deduction and type theory. Its essence is the observation that every derivation in the implicational fragment of intuitionistic natural deduction (\NDqi) corresponds precisely to a term in the simply typed lambda calculus, and vice versa. This profound connection, initially highlighted by the mathematician Haskell Curry and the logician William Alvin Howard, underscores the functional nature of implication. Moreover, it supports the idea that intuitionistic logic captures constructive reasoning. Here,  an implication is considered to be a construction or a function that maps proofs of the antecedent to proofs of the conclusion. The isomorphism has been extended to other logical operators as well. 

In essence, this correspondence immerses intuitionistic logic with a computational flavor, as it aligns logical derivations with program constructions. As a consequence, the Curry-Howard isomorphism enriches our understanding of logic and enhances our ability to reason about programs. This deep insight has led to advances in a range of disciplines, from type theory and programming languages to formal verification and proof theory. For a gentle introduction to the topic, see  \citep{sorensen&urzyczun06}.

\section{Sequent Calculi}
 \label{sec:seqcal}

\subsection{Sequents}
 \label{sec:seq}
Consider a language $\lang$ as described in Section~\ref{sec:logics}, thus for propositional or propositional modal logics. 
A {\it sequent} in this language is an expression $\Ga\seq\De$, where $\Ga$ and $\De$ are finite multisets of formulas in $\lang$. The {\it interpretation} $I(\Ga\seq\De)$ of a sequent $\Ga\seq\De$ is defined as the formula $\ben\Ga\imp\bof\De$, where $\ben\varnothing$ and $\bof\varnothing$ are defined as $\top$ and $\bot$, respectively. Given a sequent $S=(\Ga\seq\De)$, $\Ga$ is the {\it antecedent} of $S$, denoted by $S^a$, and  $\De$ is the {\it succedent} of $S$, denoted by $S^s$. The sequent $S$ is {\it single-conclusion} if $S^s$ is empty or consists of a single formula. Otherwise, $S$ is {\it multi-conclusion}. When working with sequents with a superscript, such as $S^i$, then $S^{ia}$ is short for $(S^i)^a$, and similarly for $S^{is}$.
For a multiset $\Ga$ and sequent $S$:
\[
 \sig\Ga \defn \{\sig \phi \mid \phi\in\Ga\} \; \text{(considered as a multiset)}
 \ \ \ \ 
 \sig S \defn (\sig S^a \seq \sig S^s).
\]
We define \emph{Meta-sequents} similarly as sequents, except that in the antecedent and/or succedent we also allow $\Gamma$ and $\Box \Gamma$, where $\Gamma$ is a multiset variable. A \emph{substitution} of a meta-sequent is defined similarly to the substitution of sequents, mapping formulas to formulas and multiset variables to multisets of formulas.
A {\it partitioned} sequent is an expression of the form $\Ga;\Pi \seq \De;\Sig$ in case of multi-conclusion sequents and of the form $\Ga;\Pi \seq \De$ for single-conclusion sequents. The use of partitioned sequents will become clear in the sections on interpolation. 

\subsubsection{Complexity Measures}
 \label{sec:complmeas}
The {\em degree} of a formula $\phi$ is inductively defined as $d(\bot)=d(\top)=0$, $d(p)=1$, $d(\mdl \phi)=d(\phi)+1$ for $\mdl \in \{\Box, \Diamond\}$, and $d(\phi\circ \psi)= d(\phi)+d(\psi)+1$ for $\circ \in\{\en,\of,\imp\}$.
In the setting of intuitionistic logic we need another order on formulas, the {\em weight function} $w(\cdot)$ from \citep{dyckhoff92} extended to modalities: the weight of an atom and the constants $\bot$  and $\top$ is 1, $w(\mdl \phi)=w(\phi)+1$, and $w(\phi \circ \psi) = w(\phi)+w(\psi)+i$, where $i=1$ in case $\circ \in \{\of,\imp\}$ and $i=2$ otherwise. 

We use the following orderings on sequents: $S_0 \sml_d S_1$  ($S_0 \sml_w S_1$) if and only if 
$S_0^a\cup S_0^s \sml S_1^a\cup S_1^s$, where $\sml$ is the order on multisets determined by degree (weight) as in \citep{dershowitz&manna79}: for multisets $\Ga,\De$ we have $\De \sml \Ga$ if $\De$ is the result of replacing one or more formulas in $\Ga$ by zero or more formulas of lower degree (weight).

\subsection{Sequent Calculi}
 \label{sec:seqcaldef}
A {\it sequent calculus} (or {\it calculus} for short) is a finite set of (sequent) axioms and (sequent) rules, where an axiom is a single meta-sequent and a rule is an expression of the form 
\[\small 
  \AxiomC{$S_1 \dots S_n$}
  \UnaryInfC{$S_0$}
  \DisplayProof 
\]
where $S_0,\dots,S_n$ are meta-sequents. In each rule, the \emph{principal}
or \emph{main} formulas are defined as the formulas in the
conclusion of the rule which are not in the context.

A calculus is  {\it single-conclusion} if every meta-sequent in its axioms and rules is single-conclusion. Meta-sequents above the line are called the \emph{premises} and the meta-sequent below the line is called the \emph{conclusion} of the rule.

We define $\sig S$ to be the {\it $\sig$-instance} of $S$ and $\sig S_1\dots \sig S_n/\sig S_0$ to be the {\it $\sig$-instance} of rule  $S_1\dots S_n/S_0$. 
Recall the definition of substitution from Section~\ref{sec:logics}. 

A {\it derivation} (or \emph{proof}) of $S$ in a calculus \G\ is a tree labelled with sequents in such a way that the root has the label $S$ and there is a substitution $\sig$ such that all leaves of the tree are $\sig$-instances of axioms of \G\ and for every non-leaf node in the tree with label $S'$ and immediate successors labelled $S_1,\dots,S_n$, there is a rule in \G\ such that $S_1\dots S_n/S'$ is a $\sig$-instance of that rule. A sequent $S$ is {\it derivable} (or \emph{provable}) in \G\ if it has a derivation in \G, in which case we write $\af_\G S$. The {\it depth} of a derivation is the length of its longest branch. We write $\af_mS$ to denote that $S$ has a derivation of depth at most $m$.  A rule $R$ is \emph{admissible} in $\G$, if for any $\sig$-instance $S_1\dots S_n/S_0$ of $R$, if for any $1 \leq i \leq n$ we have $\af_\G S_i$ then $\af_\G S_0$.
The rule $R$ is \emph{depth-preserving admissible} in $\G$, if for all $m$ and $1 \leq i \leq n$ we have $\G\af_m S_i$ then $\G\af_m S_0$.


We say that a calculus \G\ is a {\it calculus for logic \lgc} or \emph{\lgc\ is the logic of \G} if the formula interpretation of the sequents provable in \G\ are derivable in \lgc:
\[
\af_\G (\Ga \seq \De)
\text{ if and only if } \af_\lgc (\bigwedge \Ga \to \bigvee \De).
\]
Specifically, \G\ derives the sequent version of all formulas that are derivable in \lgc:
\[
 \af_\lgc \phi \text{ if and only if } \af_\G (\ \seq \phi).
\]

Let $G$ and $H$ be two sequent calculi. We say $H$ is an \emph{extension} of $G$ when $G \subseteq H$.

\subsection{A Sequent Calculus for Classical Propositional Logic}
One of the standard sequent calculi for classical propositional logic \CPC\ is the calculus \Gone\ given in Figure~\ref{fig:Gone}.
It is not hard to see that all the axioms and rules of this calculus are sound for \CPC: clearly, the interpretations of the axioms, which by definition are the formulas $p\imp p$ and $\bot\imp\bot$, hold in \CPC, and if the interpretation of the premises of a rule hold in \CPC, then so does the conclusion. For example, if the interpretations of the premises of the rule $(L\!\imp)$ are valid, which means $\ben\Ga\imp\phi \of\bof\De$ and $\ben\Ga\en\psi \imp \bof\De$ are, then so is $\ben\Ga\en (\phi \imp \psi)\imp\bof\De$, which is the interpretation of the rule's conclusion, i.e., $\Ga, \phi\imp\psi \seq \De$. 

The following are two examples of derivations in \Gone, where $\phi$ and $\psi$ are atomic, of the law of excluded middle (the left derivation) and of Peirce's law (the right one): 
\[\small 
 \AxiomC{$\phi \seq \phi$}
 \RightLabel{{\scriptsize{$RW$}}}
 \UnaryInfC{$\phi\seq \phi,\bot$}
 \RightLabel{{\scriptsize $R\!\imp$}}
 \UnaryInfC{$\seq \phi,\neg\phi$}
 \RightLabel{{\scriptsize $R\of$}}
 \UnaryInfC{$\seq \phi \of \neg\phi$}
 \DisplayProof  
 \ \ \ \ \ \ 
 \AxiomC{$\phi\seq \phi$}
 \RightLabel{{\scriptsize $RW$}}
 \UnaryInfC{$\phi \seq \psi,\phi$}
 \RightLabel{{\scriptsize $R\!\imp$}}
 \UnaryInfC{$\ \seq \phi \imp \psi,\phi $}
 \AxiomC{$\phi\seq \phi$}
 \RightLabel{{\scriptsize $L\!\imp$}}
 \BinaryInfC{$(\phi \imp \psi) \imp \phi \seq \phi$}
 \DisplayProof 
\]
Note how carefully the occurrences of formulas in proofs are documented because of the use of multisets and the structural rules, more on this in the next section.

\subsection{A Sequent Calculus for Intuitionistic Propositional Logic}
 \label{sec:gonei}
A beautiful fact in Proof Theory is the fact that a sequent calculus \Gonei\ for intuitionistic propositional logic \IPC\ can be obtained by {\it restricting the sequents in \Gone\ to single-conclusion ones}, see Figure~\ref{fig:Gonei}.

Note that the right contraction rule $(RC)$ disappears, as sequents cannot have more than one formula on the right. Moreover, the right disjunction rule $(R\of)$ disappears for the same reason and note its reformulation in Figure~\ref{fig:Gonei}. 

Since \Gonei\ is a calculus for \IPC\ it should not prove, for instance, the sequent versions of the law of excluded middle $\ \seq \phi \of \neg\phi$ and Peirce's law $\ \seq ((\phi \imp \psi) \imp \phi) \imp \phi$. We do not have the tools yet to prove this fact, these will follow in Subsection~\ref{sec:cuteli}. But we can at least observe that the two derivations in \Gone\ given in the previous section are indeed not valid in \Gonei\ because not all sequents in them are single-conclusion.

\subsection{Predicate Logic}
 \label{sec:predlog}
In these notes, we will be mostly concerned with propositional logics, but for completeness sake, we briefly discuss the extension of \Gone\ to its predicate version here. The language of predicate logic is an extension of the propositional language $\lang$ with the quantifiers $\exists$ and $\forall$, countably infinite variables $x, y, \ldots$, and $n$-ary relation symbols and $n$-ary function symbols for any natural number $n$. We sometimes call $0$-ary relation symbols as \emph{proposition variables} and $0$-ary function symbols as \emph{constants}. The terms and formulas in the predicate language are defined as usual.  
The sequent calculus \Goneq\ and \Goneqi\ consist of \Gone\ and \Gonei, respectively, and the following quantifier rules:
\[\small 
 \begin{array}{ll}
 \AxiomC{$\Ga,\phi(y) \seq \De$}
 \RightLabel{{\footnotesize $L\E$}}
 \UnaryInfC{$\Ga,\E x \phi(x) \seq \De$}
 \DisplayProof & 
 \AxiomC{$\Ga \seq \phi(t),\De$}
 \RightLabel{{\footnotesize $R\E$}}
 \UnaryInfC{$\Ga \seq \E x \phi(x),\De$}
 \DisplayProof \\
 \\
 \AxiomC{$\Ga,\phi(t)\seq \De$}
 \RightLabel{{\footnotesize $L\A$}}
 \UnaryInfC{$\Ga,\A x\phi(x) \seq \De$}
 \DisplayProof & 
 \AxiomC{$\Ga \seq\phi(y),\De$}
 \RightLabel{{\footnotesize $R\A$}}
 \UnaryInfC{$\Ga \seq \A x \phi(x),\De$}
 \DisplayProof \\
 \end{array}
\]
In $(L\E)$ and $(R\A)$, $y$ is not free in $\Ga,\De, \phi$ and is called an \emph{eigenvariable}. In \Goneqi, $\De=\varnothing$ in $(R\E)$ and $(R\A)$.

The elegant symmetry of the propositional rules is again present in the quantifier rules and the difficulty of the elimination of existential quantifiers in natural deduction is no longer there.

\subsection{Structural Rules}
 \label{sec:structural}
The sequent calculi introduced by Gentzen differ from the ones in the previous section in two ways. First, Gentzen's sequents consist of \emph{sequences} rather than multisets of formulas. The difference between a sequence of formulas and a multiset of formulas is that in a sequence the order of formulas matters. Gentzen's calculus\footnote{{\it Logistische Kalk\"ul} in German, whence the name \LK.} \LK\ consists of \Goneq\ (but then with $\Ga$ and $\De$ standing for sequences of formulas) plus the following two rules of {\it Exchange}: 
\[\small 
 \AxiomC{$\Ga,\phi,\psi,\Pi \seq \De$}
 \RightLabel{{\footnotesize $LE$}}
 \UnaryInfC{$\Ga,\psi,\phi,\Pi \seq \De$}
 \DisplayProof 
 \ \ \ \  
 \AxiomC{$\Ga \seq \De,\phi,\psi,\Sig$}
 \RightLabel{{\footnotesize $RE$}}
 \UnaryInfC{$\Ga \seq \De,\psi,\phi,\Sig$}
 \DisplayProof  
\]
Nowadays, it is more common to use sequents that consist of multisets or even sets, and in this paper, we consider the multiset version most of the time. 

Second, besides the weakening, contraction, and exchange rules, \LK\ contains one other structural rule, the {\it Cut rule}:
\[\small 
 \AxiomC{$\Ga \seq \De, \phi$}
 \AxiomC{$\phi, \Sig \seq \Pi$}
 \RightLabel{\footnotesize Cut}
 \BinaryInfC{$\Ga, \Sig \seq \De, \Pi$}
 \DisplayProof 
\]

The formula $\phi$ is the {\it cutformula} of the inference. The Cut rule is structural in that no connective is introduced or appears. But it is special in another sense too: there is a formula, namely $\phi$, that ``disappears'' in the conclusion. For all the other rules in the calculus $\LK$, and thus for all the rules of \Goneq, any formula that occurs in the premises of a rule occurs as a subformula in the conclusion, where we consider $\phi(t)$ to be a subformula of $\E x \phi(x)$ and $\A x \phi(x)$. This property is the {\it subformula property}, which \Goneq\ thus satisfies and \LK\ does not.

Note that, therefore, in any derivation in \Goneq\ of a sequent $S$, any predicate that occurs in the proof occurs in $S$. One could see this as a form of {\it purity of proof}, a notion in (the philosophy of) mathematics, where a proof of a theorem is {\it pure} if it does not use notions alien to the theorem. An example of a proof that is not pure is the topological proof of the infinitude of primes. For more on the notion of purity, see e.g. \citep{arana&detlefsen11,iemhoff17a,kahle&pulcini18}.

Thus, the absence of the Cut rule has interesting consequences. But so does having the Cut rule in a sequent calculus. On a very general level, the Cut rule is considered to capture the use of lemmas in mathematics: here, the cut formula $\phi$ is the lemma, and if the mathematician shows that assumptions $\Ga$ imply the lemma and that the lemma implies $\psi$, then the mathematician is justified to conclude that $\psi$ follows from assumptions $\Ga$:
\[
 \AxiomC{$\Ga \seq \text{lemma}$}
 \AxiomC{$\Ga,\text{lemma} \seq \psi$}
 \RightLabel{\footnotesize Cut}
 \BinaryInfC{$\Ga\seq \psi$}
 \DisplayProof 
\]
The beautiful and extremely useful fact about the Cut rule is that it can be eliminated from proofs in \LK, as shown by \cite{gentzen35}. If $\LKm$ denotes $\LK$ minus the Cut rule, then 

\begin{theorem}{(Gentzen)} \label{thm: cut elim}
If $\af_{\LK} S$, then $\af_{\LKm} S$.
\end{theorem}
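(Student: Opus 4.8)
The plan is to prove cut elimination by a double induction on the complexity of the cut formula and the size of the derivation producing the cut, following Gentzen's original strategy. First I would introduce the notion of a \emph{topmost cut}: any derivation in $\LK$ containing applications of Cut has an uppermost such application, i.e.\ a Cut whose two premise subderivations are already cut-free. It suffices to show how to eliminate such a topmost cut while possibly introducing new cuts, but only of strictly smaller cutformula degree, and then iterate. To make the induction precise, I would define the \emph{cut-rank} of a derivation as the supremum of the degrees $d(\phi)$ of cutformulas $\phi$ appearing in it, and argue that we can reduce the cut-rank to $0$ (no cuts) by repeatedly eliminating the highest-degree cuts.

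The core is a reduction lemma: given a derivation whose final rule is a topmost Cut on cutformula $\phi$ with cut-free premises, I would produce a derivation of the same endsequent in which all cutformulas have degree strictly less than $d(\phi)$. The argument splits into cases according to how $\phi$ is generated in the two premises. The routine cases are where $\phi$ is \emph{not} principal in at least one premise: then the last rule of that premise can be permuted below the Cut (a \emph{commutative} conversion), pushing the Cut upward toward the leaves and eventually reaching a situation where $\phi$ is principal on both sides, or reaching an axiom. In the axiom case, where one premise is an instance of $p \seq p$ or $\bot \seq \bot$, the Cut is simply removed or absorbed into weakening. The interesting cases are the \emph{principal} conversions, where $\phi$ is the main formula introduced by the last rule in both premises simultaneously; here one replaces the single cut on $\phi$ by one or two cuts on the immediate subformulas of $\phi$, each of strictly smaller degree. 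For instance, a cut on $\phi_1 \en \phi_2$, introduced by $(R\en)$ on the left and $(L\en)$ on the right, is replaced by a cut on $\phi_i$; a cut on $\phi \imp \psi$ is replaced by cuts on $\phi$ and on $\psi$; and the quantifier cases use the substitution instances $\phi(t)$ and the eigenvariable condition.

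Throughout, the structural rules require care: the weakening and contraction rules interact with Cut nontrivially, and handling them is where most of the bookkeeping lives. In particular, when the cutformula is introduced by contraction in one premise, a naive permutation duplicates the Cut; the standard device is to prove a stronger reduction that cuts against an arbitrary \emph{multiset} of copies of $\phi$ at once (a \emph{mix} rule, in Gentzen's terminology), so that the contraction can be absorbed without multiplying the rank. I would therefore either replace Cut by Mix and eliminate Mix, or build the contraction-handling directly into the induction hypothesis.

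\textbf{The main obstacle} is precisely this contraction case: because a cutformula may have been contracted an unbounded number of times in either premise, a single Cut can correspond to arbitrarily many occurrences, and a careless reduction fails to decrease any obvious measure. Controlling this — via the Mix rule or an equivalent strengthening of the induction hypothesis — is the crux of the proof, and it is what forces the induction to be on the pair (degree of cutformula, sum of the heights of the two premise derivations) ordered lexicographically, rather than on height alone. All the other cases are then routine permutations and reductions that strictly decrease this measure.
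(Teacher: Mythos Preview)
Your proposal is correct and follows the same strategy that the paper itself attributes to Gentzen: the paper does not give a detailed proof but only remarks that Gentzen's algorithm rests on local transformations that permute rules upward and replace a cut on a compound formula by cuts on its immediate subformulas, together with some global transformations. Your sketch is a faithful and considerably more detailed elaboration of exactly this approach, including the standard Mix-rule device for handling contraction, which the paper leaves implicit.
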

Theorem \ref{thm: cut elim}, as stated above, shows the admissibility of cut in \LK. Gentzen indeed proved something stronger, namely the \emph{cut-elimination} theorem. He provided an algorithm to transform any proof $\pi$ of a sequent $S$ in \LK\ into a proof $\pi'$ of $S$ in $\LKm$, which means that the cut rule is not used in $\pi'$. His proof was based on certain global transformations as well as some local transformation steps, including the permutation of a rule upwards or replacing a cut on a formula with some cuts on its immediate subformulas.

In the systems \Goneq,\Goneqi, and their propositional parts \Gone,\Gonei, the same holds, but since the cut rule is not a rule in these systems, one speaks of {\it cut-admissibility} rather than cut-elimination ({\sf \G+Cut} denotes the sequent calculus \G\ to which the Cut rule is added):

\begin{theorem}
For any $\G \in \{\Goneq,\Goneqi,\Gone,\Gonei\}$: 
$\af_{\sf \G+Cut} S$ implies $\af_{\sf \G} S$.  
\end{theorem}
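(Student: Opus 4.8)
The plan is to prove cut-admissibility uniformly for all four calculi by a double induction, adapting Gentzen's strategy to the multiset formulation with explicit structural rules. It suffices to eliminate a single \emph{topmost} application of Cut, i.e.\ one whose two premises are already derivable in $\G$ without Cut; iterating this reduction from the leaves of a derivation in ${\sf \G+Cut}$ downwards removes every cut. So I fix such a cut with cut formula $\phi$, with a $\G$-derivation of the left premise $\Ga \seq \De, \phi$ and a $\G$-derivation of the right premise $\phi, \Sig \seq \Pi$, and aim to construct a $\G$-derivation of $\Ga, \Sig \seq \De, \Pi$. The induction proceeds on the pair $(d(\phi), h)$ ordered lexicographically, where $d(\phi)$ is the degree of the cut formula and $h$ is the sum of the depths of the two subderivations.

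Before the main argument I would establish as auxiliary facts that weakening and contraction are depth-preserving admissible in each $\G$; both are routine inductions on derivation depth, and they will be invoked repeatedly in the reduction steps below. The case analysis then splits according to whether $\phi$ is principal in the last rule of both premises. In the \emph{permutation case}, where $\phi$ is not principal in the last rule of (say) the left premise, that rule acts on some other formula and $\phi$ sits in its context; I would push the cut upward to the premise(s) of that rule and reapply the rule below. The resulting cut or cuts have the same cut formula $\phi$ but a strictly smaller combined height, so the inner induction hypothesis applies. If instead $\phi$ was introduced into the left (or right) premise by weakening, the cut is simply discarded and the conclusion obtained directly by weakening.

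In the \emph{principal case}, where $\phi$ is principal in both premises, its outermost connective or quantifier must match, and I would replace the cut on $\phi$ by one or more cuts on the immediate subformulas of $\phi$. For $\phi$ of the form $\psi_1 \en \psi_2$, $\psi_1 \of \psi_2$, or $\psi_1 \imp \psi_2$ these are the standard connective reductions, recombining the subderivations so that only subformulas of $\phi$ are cut. For $\phi$ of the form $\A x\, \psi(x)$ or $\E x\, \psi(x)$ in $\Goneq$ and $\Goneqi$, the reduction cuts on $\psi(t)$, using the eigenvariable condition to substitute $t$ for the eigenvariable throughout the relevant subderivation. In each such case the new cut formulas have strictly smaller degree, so the outer induction hypothesis applies irrespective of the new heights. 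For the single-conclusion calculi $\Gonei$ and $\Goneqi$ the very same reductions work, the only difference being that succedents are singletons, which if anything simplifies the bookkeeping.

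The \textbf{main obstacle} is the contraction case: when $\phi$ becomes principal only after a preceding contraction on $\phi$, a naive upward permutation of the cut leaves two copies of $\phi$ in a premise and decreases neither induction measure. The two standard remedies are, first, to run the entire argument for Gentzen's Mix rule, which cuts simultaneously on all copies of the cut formula and thereby absorbs the contraction, and second, to invoke the depth-preserving admissibility of contraction proved at the outset so that the duplicated copies can be merged before the reduction without increasing the height. Either route closes the gap; I would take the depth-preserving-contraction route, as it keeps the statement phrased directly in terms of Cut rather than forcing a detour through Mix.
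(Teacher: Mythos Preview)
The paper does not actually prove this theorem: it is stated immediately after Gentzen's cut-elimination for $\LK$ as the corresponding fact for the $\Gone$-family, with no argument given. Your sketch is therefore not to be compared against a proof in the paper but against Gentzen's original strategy, which it follows faithfully in outline.

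There is, however, a real gap in your preferred handling of the contraction obstacle. You propose to establish depth-preserving admissibility of contraction in each $\G$ as a ``routine induction on derivation depth'' and then use it to shrink the height before cutting. That argument is the one that works for the $\Gth$-family (see Lemma~\ref{lem:structuralrules}), and it works there precisely because the logical rules are invertible in a depth-preserving way (Lemma~\ref{lem:inversion}) and there are no explicit structural rules. In the $\Gone$-family neither condition holds. The axioms are context-free ($p\seq p$ and $\bot\seq$), so contexts are built up by explicit weakenings and contractions, and the principal case of your induction on depth requires inversion: e.g.\ if $\Ga,\phi\of\psi,\phi\of\psi\seq\De$ ends in $L\of$ with $\phi\of\psi$ principal, you must invert the remaining copy of $\phi\of\psi$ in the premises, but depth-preserving inversion for $L\of$ in $\Gone$ itself runs into the contraction case (when the derivation ends in $LC$ on $\phi\of\psi$), giving a circularity. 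For $\Gonei$ the situation is worse: $R\of$ is genuinely non-invertible, so the $\Gthi$-style argument does not even get off the ground on the right. In short, depth-preserving contraction is \emph{not} routine for the $\Gone$-calculi, and I do not believe it can be established independently of cut-elimination itself.

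The fix is the one you list first and then set aside: run the whole argument for the Mix (multicut) rule, as Gentzen did. Mix absorbs all copies of the cut formula on both sides, so the contraction case simply does not arise; the permutation and principal-formula reductions go through as you describe, and at the end a single Cut is a special case of Mix. So your overall plan is sound, but for these particular calculi you should commit to the Mix detour rather than the depth-preserving-contraction route.
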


A derivation without cuts is {\it cut-free}. 

Because of the admissibility of Cut we can use or omit it whenever the setting requires so. When we easily want to combine derivations to form new ones, Cut comes in handy: when we have shown that $\phi \seq \psi$ and $\neg\phi \seq \psi$, then we have a derivation of $(\ \seq\psi)$ from a derivation of $(\ \seq \phi\of\neg\phi)$, an application of $(L\of)$ and Cut. It may well be that a cut-free derivation of $\psi$ has a very different form and cannot be obtained easily from the given derivations. 

On the other hand, suppose we want to show the consistency of \CQC\ by using \Goneq+{\sf Cut}. Then, the structural rules and Cut, in particular, are a nuisance because they spoil bottom-up proof search in the calculus. This becomes clear when it is shown that the calculus that is introduced next, \Gth, has good computational properties. For now, let us say that having a calculus with many of the good properties of \Goneq\ but without the structural rules is highly desirable. This brings us to the sequent calculus \Gthq\ given in Figure~\ref{fig:Gthq}. Its intuitionistic variant \Gthqi\ is given in Figure~\ref{fig:Gthqi}, and is essentially nothing but \Gthq\ with the requirement that all sequents are single-conclusion, and a reformulation of $(R\of)$ and $(L\!\imp)$. The first one needs no argument, and the form of the second one will become clear in Lemma~\ref{lem:inversion}. 

\Gth\ (\Gthi) is the propositional version of \Gthq\ (\Gthqi), so for the propositional language and without the rules for the quantifiers. For the curious reader, there are also sequent calculi $\Gtw$ and $\Gtwi$, which are obtained from $\Gone$ and $\Gonei$, respectively, by considering a generalized version of their axioms and leaving out the weakening rules. There are a few things to note here. Indeed, \Gthq\ contains no structural rules. The axioms have changed in that they are generalized to have arbitrary multiset variables on both sides. The rules for the connectives remain unchanged except for $(R\of)$. For \Gthqi, there are two more differences with \Goneqi, and that is the duplicate of the principal formula in the premise of $(R\E)$ and $(L\A)$, which causes a {\it hidden contraction}. In the proof of the admissibility of the structural rules in \Gth\ and \Gthq, Lemma~\ref{lem:structuralrules}, we will see the reason for these adjustments. But first, we need the inversion lemma.

\begin{lemma} ({\it Inversion Lemma})\\
 \label{lem:inversion}
Let $\af$ denote derivability in \Gthq\ (for $\af_n$ see Subsection~\ref{sec:seqcaldef}). Then, 
\begin{description}
\item[$(L\en)$] $\af_n \Ga, \phi\en\psi \seq \De$ implies $\af_n \Ga, \phi,\psi \seq \De$; 
\item[$(R\en)$] $\af_n \Ga \seq \phi\en\psi,\De$ implies $\af_n \Ga \seq \phi,\De$ and $\af_n \Ga \seq \psi,\De$; 
\item[$(L\of)$] $\af_n \Ga, \phi\of\psi \seq \De$ implies $\af_n \Ga, \phi\seq \De$ and $\af_n \Ga, \psi\seq \De$; 
\item[$(R\of)$] $\af_n \Ga \seq \phi \of \psi,\De$ implies $\af_n \Ga \seq \phi,\psi,\De$; 
\item[$(L\!\imp)$] $\af_n \Ga, \phi\imp\psi \seq \De$ implies $\af_n \Ga\seq \phi,\De$ and $\af_n \Ga,\psi\seq \De$; 
\item[$(R\!\imp)$] $\af_n \Ga \seq \phi\imp\psi,\De$ implies $\af_n \Ga, \phi \seq \psi,\De$; 
\item[$(L\E)$] $\af_n \Ga, \E x \phi(x)\seq \De$ implies $\af_n \Ga, \phi(y)\seq \De$ for all $y$ not in $\Ga,\De, \phi$; 
\item[$(R\A)$] $\af_n \Ga \seq \A x \phi(x),\De$ implies $\af_n \Ga \seq \phi(y),\De$ for all $y$ not in $\Ga,\De, \phi$. 
\end{description}
The same holds for \Gthi, in which case sequents are single-conclusion, which means that in \Gthi\, $(R \vee)$ does not hold, and in the case $(L\!\imp)$ only $\af_n \Ga,\psi\seq \De$ can be concluded from $\af_n \Ga, \phi\imp\psi \seq \De$.
\end{lemma}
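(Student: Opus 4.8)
The plan is to prove each clause of the lemma by induction on the depth $n$ of the given derivation, using a single uniform strategy and treating the connectives one at a time. A point worth stressing at the outset is that the admissibility of weakening and contraction (Lemma~\ref{lem:structuralrules}) is only established \emph{afterwards}, so the argument may not appeal to the structural rules: the whole design of $\Gthq$ and $\Gthi$ is arranged so that inversion is depth-preserving and derivable directly from the rules. I illustrate with $(L\en)$: assuming $\af_n \Ga, \phi\en\psi \seq \De$, I want $\af_n \Ga, \phi,\psi \seq \De$. For the base case, when the derivation is a single axiom, the key observation is that the compound formula being inverted — here $\phi\en\psi$, and analogously $\phi\of\psi$, $\phi\imp\psi$, $\E x\,\phi(x)$, $\A x\,\phi(x)$ in the other clauses — is neither an atom nor $\bot$, hence is never the formula witnessing axiomhood. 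Thus the atom or $\bot$ making $\Ga,\phi\en\psi\seq\De$ an axiom still occurs after we delete $\phi\en\psi$ and insert $\phi,\psi$, so the transformed sequent is again an axiom, derivable in depth $0$.

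For the inductive step I do a case analysis on the last rule $R$ of the derivation. If the inverted compound formula is the principal formula of $R$, then $R$ is exactly the rule being inverted and its premise or premises are precisely the desired sequents, already of depth $\le n-1$; for instance the premise of $(L\en)$ on $\phi\en\psi$ is $\Ga,\phi,\psi\seq\De$, while for $(L\!\imp)$ the two premises $\Ga\seq\phi,\De$ and $\Ga,\psi\seq\De$ give both stated conclusions, and similarly for $(R\en)$ and $(L\of)$. If instead the compound formula lies in the context of $R$, then each premise of $R$ still contains it in the same position, has depth $\le n-1$, and I apply the induction hypothesis (for the same clause) to each premise and then re-apply $R$, which does not touch the inverted formula, obtaining the transformed conclusion at depth $\le n$. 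This pattern dispatches every propositional clause.

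The main obstacle lies in the quantifier clauses $(L\E)$ and $(R\A)$, because of the eigenvariable conditions. When the last rule is $(L\E)$ with principal $\E x\,\phi(x)$, its premise is $\Ga,\phi(z)\seq\De$ for the \emph{specific} eigenvariable $z$, whereas I must produce $\Ga,\phi(y)\seq\De$ for an \emph{arbitrary} $y$ fresh for $\Ga,\De,\phi$. This forces an auxiliary depth-preserving variable-renaming lemma: if $\af_n S$ and $y$ does not occur in $S$, then $\af_n S[y/z]$. Granting it, one substitutes $y$ for $z$ (legitimate since $z$ does not occur in $\Ga,\De,\phi$) to obtain the required premise at the same depth. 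The very same renaming lemma is needed in the context subcase to avoid eigenvariable clashes: before re-applying a quantifier rule whose eigenvariable $w$ might coincide with the fresh $y$, I first rename $w$ to a genuinely fresh variable so that its eigenvariable condition survives the insertion of $\phi(y)$.

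Finally, for the intuitionistic calculi $\Gthi$ and $\Gthqi$ the single-conclusion constraint accounts for the two differences noted in the statement. It voids the $(R\of)$ clause outright, since a succedent cannot carry two formulas. And because the reformulated $(L\!\imp)$ retains its principal implication in the left premise $\Ga,\phi\imp\psi\seq\phi$ (the hidden contraction) rather than discharging it, the principal case no longer supplies a premise of the form $\Ga\seq\phi$; hence only $\af_n\Ga,\psi\seq\De$ can be recovered, exactly as stated. All remaining clauses go through by the identical induction, now with sequents kept single-conclusion throughout.
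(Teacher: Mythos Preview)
Your proposal is correct and follows the standard depth-preserving induction that the reference cited in the paper (Proposition~3.5.4 of \citep{troelstra&schwichtenberg00}) carries out; since the paper itself leaves the proof to the reader, there is nothing further to compare. Your explicit isolation of the variable-renaming lemma for the quantifier clauses, and your care not to invoke weakening or contraction (which the paper proves only \emph{after} the Inversion Lemma), are exactly the points one must get right.
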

\begin{proof}
Left to the reader, or see Proposition 3.5.4 in \citep{troelstra&schwichtenberg00}. 
\end{proof}

\begin{lemma} ({\it Weakening and Contraction Lemma})
 \label{lem:structuralrules}\\
Weakening and Contraction are depth-preserving admissible in \Gth\ and \Gthq, and in \Gthi\ and \Gthqi.
\end{lemma}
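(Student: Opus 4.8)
The plan is to prove both statements by induction on the depth $n$ of the given derivation, treating the four calculi uniformly and dealing separately with weakening and with contraction. In each induction the feature that is exploited is that the G3-style axioms carry arbitrary multiset contexts on both sides and that every logical rule acts on a schematic context; this is precisely the design that makes the structural rules absorbable.

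First I would settle \emph{weakening}, i.e.\ that $\af_n \Ga \seq \De$ implies $\af_n \Ga,\phi \seq \De$ and $\af_n \Ga \seq \phi,\De$ (in the single-conclusion calculi \Gthi\ and \Gthqi\ the right version is read as passing from an empty succedent to $\phi$). For the base case, an axiom stays an axiom after a formula is added to its context, since the atomic matching pair (or the left $\bot$) survives. For the inductive step, the weakening formula $\phi$ is simply threaded through the context of the last rule: one applies the induction hypothesis to each premise and reapplies the same rule, with no change in depth. The only delicate point is a quantifier rule $(L\E)$ or $(R\A)$ whose eigenvariable happens to occur in $\phi$; here I would first invoke depth-preserving admissibility of eigenvariable renaming (a routine substitution lemma) to make the eigenvariable fresh for $\phi$, and then weaken.

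Next, \emph{contraction}: $\af_n \Ga,\phi,\phi \seq \De$ implies $\af_n \Ga,\phi \seq \De$, and dually on the right in \Gth\ and \Gthq\ (right contraction being vacuous in the single-conclusion calculi). Again the base case is immediate, since deleting one copy of a duplicated atom or of $\bot$ leaves an axiom. In the inductive step, if neither occurrence of the contracted formula is principal in the last rule, both copies lie in the context, so the induction hypothesis applied to the premises followed by reapplication of the rule does the job. The essential case is when one occurrence is principal. Here I would use the Inversion Lemma (Lemma~\ref{lem:inversion}), which is depth-preserving, to decompose the surviving copy as well. For instance, if the last rule is $(L\en)$ with premise $\af_{n-1}\Ga,\phi_1,\phi_2,\phi_1\en\phi_2 \seq \De$, inversion on the remaining $\phi_1\en\phi_2$ yields $\af_{n-1}\Ga,\phi_1,\phi_2,\phi_1,\phi_2 \seq \De$; contracting $\phi_1$ and $\phi_2$ by the induction hypothesis (their derivations have depth $<n$) gives $\af_{n-1}\Ga,\phi_1,\phi_2 \seq \De$, and a final $(L\en)$ restores depth $n$. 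All other connective and quantifier cases follow the same pattern.

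The main obstacle is exactly this principal case of contraction, and it is what forces the order of the lemmas: without the \emph{depth-preserving} Inversion Lemma one could not both remove the duplicated subformulas and keep the depth bound. The rules built with a \emph{hidden contraction} --- the repeated principal formula in $(R\E)$ and $(L\A)$ in the quantified systems, and the repeated $\phi\imp\psi$ in the intuitionistic $(L\!\imp)$ --- are precisely the cases that would fail for a naive rule set, and here they are handled by combining inversion on the retained copy with the induction hypothesis on the genuinely smaller subderivations, the attendant eigenvariable conditions being managed by the renaming lemma noted above. Since every appeal to inversion preserves depth and every appeal to the induction hypothesis is to a strictly shorter derivation, the bound $n$ is maintained throughout, which yields depth-preserving admissibility.
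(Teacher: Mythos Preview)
Your approach matches the paper's: induction on derivation depth, with weakening handled by threading the extra formula through the context (plus eigenvariable renaming), and contraction handled via the depth-preserving Inversion Lemma when the contracted formula is principal.

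One point to sharpen: your final paragraph says that the hidden-contraction rules $(R\E)$, $(L\A)$, and the intuitionistic $(L\!\imp)$ are ``handled by combining inversion on the retained copy with the induction hypothesis.'' For $(R\E)$ and $(L\A)$ this is not quite right, and in fact cannot be right: the Inversion Lemma does \emph{not} cover $(R\E)$ or $(L\A)$ (it only gives inversion for $(L\E)$ and $(R\A)$). The whole point of copying the principal formula into the premise of these rules is that inversion becomes unnecessary. If the conclusion is, say, $\Ga \seq \E x\phi(x),\E x\phi(x),\De$, the premise already contains $\Ga \seq \phi(t),\E x\phi(x),\E x\phi(x),\De$; applying the induction hypothesis directly contracts one $\E x\phi(x)$, and a single $(R\E)$ finishes. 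The paper's proof and its subsequent Remark make exactly this point. The intuitionistic $(L\!\imp)$ is the genuinely mixed case: the left premise is handled by the built-in copy plus IH, while the right premise needs the (one-sided) inversion that \emph{is} available there. So your instinct about why the hidden contractions are engineered in is correct; just don't invoke inversion where none exists.
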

\begin{proof}
We treat \Gthq, the other three calculi can be treated in the same way. $\af$ stands for derivability in \Gthq, and $\af_n S$ means that $S$ has a derivation of depth at most $n$ in \Gthq. 

{\it Weakening} We have to show that if $\af_n \Ga\seq\De$, then $\af_n \Ga,\upchi \seq \De$ and $\af_n \Ga \seq \upchi,\De$, for any formula $\upchi$. We prove this by induction on $n$. If $\Ga\seq \De$ is an axiom, then $\Ga,\upchi \seq \De$ and $\Ga \seq \upchi, \De$ are instances of the same axioms, which proves the statement. If $\Ga\seq \De$ is not an axiom, consider the last inference of the derivation. We distinguish by cases according to the rule $R$ of which the last inference is an instance. Suppose $R$ is $L\of$, then the last inference looks as follows, where $\Ga =\Ga',\phi \vee \psi$:
\[\small 
 \AxiomC{$\Ga',\phi \seq \De$}
 \AxiomC{$\Ga',\psi \seq \De$}
 \RightLabel{{\footnotesize $L\of$}}
 \BinaryInfC{$\Ga', \phi\of\psi \seq \De$}
 \DisplayProof 
\]
By the induction hypothesis $\Ga',\phi,\upchi \seq \De$ and $\Ga',\psi, \upchi \seq \De$ have derivations of at most depth $n-1$. An application of $L\of$ to these sequents shows that $\Ga', \phi\of\psi,\upchi \seq \De$ has a derivation of depth at most $n$. The case for right weakening is analogous. The other connectives and the quantifiers can be treated in exactly the same way. Note that for the strong quantifiers (i.e., positive occurrences of the universal quantifier and
negative occurrences of the existential quantifier), with the condition on the variable, the eigenvariables may have to be changed in order to not interfere with the variables in $\upchi$. We leave the details to the reader. 

{\it Contraction} We have to show that $\af_n \Ga, \upchi,\upchi \seq \De$ implies $\af_n \Ga,\upchi \seq\De$, and that $\af_n \Ga \seq \upchi,\upchi,\De$ implies $\af_n \Ga \seq \upchi,\De$. We treat the second case, contraction on the right, and use induction on $n$. If $\Ga\seq \upchi,\upchi,\De$ is an axiom, then $\Ga \seq \upchi,\De$ is an instance of the same axiom, which proves the statement. If $\Ga\seq \upchi,\upchi,\De$ is not an axiom, consider the last inference of the derivation. We distinguish by cases according to the rule $R$ of which the last inference is an instance. Suppose $R$ is $R\of$, then there are two possibilities for the last inference, either $\upchi$ is the main formula or not:
\[\small 
 \AxiomC{$\Ga\seq \phi,\psi, \upchi,\upchi,\De'$}
 \RightLabel{{\footnotesize $R\of$}}
 \UnaryInfC{$\Ga \seq \phi\of\psi,\upchi,\upchi,\De'$}
 \DisplayProof 
 \ \ \ \ \ 
 \AxiomC{$\Ga\seq \phi,\psi, \phi \of \psi,\De'$}
 \RightLabel{{\footnotesize $R\of$}}
 \UnaryInfC{$\Ga \seq \phi\of\psi,\phi \of \psi,\De'$}
 \DisplayProof 
\]
In the first case, the induction hypothesis applies to the premise, and then an application of $R\of$ gives 
$\af_n\Ga \seq \phi\of\psi,\upchi,\De'$, as desired. In the second case we can apply the Inversion Lemma~\ref{lem:inversion} to the premise $\af_{n-1}\Ga \seq \phi,\psi,\phi\of\psi,\De'$ and obtain $\af_{n-1}\Ga \seq \phi,\psi,\phi,\psi,\De'$. By applying the induction hypothesis twice, once to $\phi$ and once to $\psi$, we obtain 
$\af_{n-1}\Ga \seq \phi,\psi,\De'$. An application of $R\of$ proves $\af_n\Ga \seq \phi\of\psi,\De'$. 

The other connectives can be treated in exactly the same way. 
In the treatment of $L\!\imp$ for $\Gthi$ the use of the main formula in the premise will become clear. 

From the quantifiers, we only treat $R\E$. Thus, the last inference can have the following two forms:
\[\small 
 \AxiomC{$\Ga \seq \phi(t),\E x\phi(x),\upchi,\upchi,\De'$}
 \RightLabel{{\footnotesize $R\E$}}
 \UnaryInfC{$\Ga \seq \E x\phi(x),\upchi,\upchi,\De'$}
 \DisplayProof 
 \ \ \ \ \ 
 \AxiomC{$\Ga \seq \phi(t),\E x\phi(x),\E x\phi(x),\De'$}
 \RightLabel{{\footnotesize $R\E$}}
 \UnaryInfC{$\Ga \seq \E x\phi(x),\E x\phi(x),\De'$}
 \DisplayProof 
\]
In both cases, we can apply the induction hypothesis to the premise to remove a $\upchi$ or $\E x\phi(x)$, respectively, and then apply $R\E$ to obtain the desired conclusion. 
\end{proof}

\begin{remark}
Note that in the proof of Lemma~\ref{lem:structuralrules} it is essential that in $R\E$ the main formula is copied in the premise. The same remark holds for $L\A$. 
\end{remark}

\subsection{Cut-elimination}
 \label{sec:cuteli}
We have seen that weakening and contraction are admissible in the four calculi \Gth, \Gthq, \Gthi, and \Gthqi. 
Also, the other admissible rule, Cut, is admissible in them, but that is far harder to prove. We do not give the proof here, there are many in the literature, for example in \citep{troelstra&schwichtenberg00}.

\begin{theorem} ({\it Cut-elimination})
 \label{thm:cutelimination}\\
For {\sf X} being any of the calculi \Gth, \Gthq, \Gthi, \Gthqi, and for all sequents $S$: 
\[
\af_{\sf X+Cut} S \ \text{ implies }\af_{\sf X} S.
\]
In other words, Cut is admissible in these calculi. Moreover, there is an algorithm that given a proof of $S$ in $\sf X+Cut$ provides a proof for $S$ in $\sf X$.
\end{theorem}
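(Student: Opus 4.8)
The plan is to prove cut-elimination by a double induction familiar from the Gentzen-style literature, adapted to the four calculi $\Gth$, $\Gthq$, $\Gthi$, $\Gthqi$ simultaneously, since they differ only in the single- versus multi-conclusion restriction and the minor reformulations of $(R\of)$, $(R\E)$, and $(L\A)$. The central quantity is the \emph{cut rank}: I would measure each application of Cut by the pair $\langle c, h\rangle$, where $c$ is the degree (or in the intuitionistic case, the weight $w(\cdot)$ from Subsection~\ref{sec:complmeas}) of the cutformula $\phi$, and $h$ is the sum of the depths of the two derivations feeding into the cut. The argument proceeds by a principal induction on $c$ and a subsidiary induction on $h$, pushing any given topmost cut either upward past the last rules applied to its premises, or reducing it to cuts on proper subformulas of $\phi$ (which have strictly smaller $c$). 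It suffices to treat a single uppermost cut in a derivation that is otherwise cut-free, and then iterate.

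First I would isolate the uppermost cut, so both premises $\Ga\seq\De,\phi$ and $\phi,\Sig\seq\Pi$ arrive via cut-free derivations. The case split is on whether $\phi$ is principal in the last inference of \emph{both} premises. If $\phi$ is \emph{not} principal on at least one side, the cut permutes upward: the last rule on that side acts on formulas other than $\phi$, so I can commute the cut above it, strictly decreasing $h$ while leaving $c$ unchanged, and close by the subsidiary induction hypothesis. Here the depth-preserving admissibility of weakening and contraction (Lemma~\ref{lem:structuralrules}) is exactly what keeps the bookkeeping clean when $\phi$ must be reintroduced or duplicated into a context. If $\phi$ \emph{is} principal on both sides, I would run the standard \emph{key cases} according to the top connective of $\phi$: for instance a cut on $\phi_1\en\phi_2$ produced by $(R\en)$ and consumed by $(L\en)$ reduces, after an appeal to the Inversion Lemma~\ref{lem:inversion}, to a cut on one of $\phi_1,\phi_2$, which has smaller degree (weight); the $\of$, $\imp$, $\E$, and $\A$ cases are analogous, with the quantifier cases substituting an eigenvariable by a term.

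The step I expect to be the main obstacle is the implication/contraction interaction together with the quantifier rules carrying the hidden contraction. Concretely, when the cutformula is $\phi\imp\psi$ principal on both sides, eliminating the cut generates two new cuts, one on $\phi$ and one on $\psi$, and in the process the context $\Ga$ (or $\Sig$) may be duplicated, so one needs contraction to restore a single copy; the same phenomenon arises from the copied principal formula in $(R\E)$ and $(L\A)$ in the $\Gthq$/$\Gthqi$ systems. Making sure each such reduction strictly decreases the induction measure --- rather than merely producing smaller-degree cuts at the cost of blowing up the depth in a way the subsidiary induction cannot absorb --- is the delicate point, and it is precisely where the choice of measure and the depth-preserving form of Lemma~\ref{lem:structuralrules} must be deployed carefully. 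For the intuitionistic calculi, switching from degree to weight $w(\cdot)$ and respecting the single-conclusion restriction on $(R\of)$ and $(L\!\imp)$ is what makes the same schema go through. Since every reduction step is effective and the induction is well-founded, the procedure terminates and yields the claimed algorithm transforming any $\sf X+Cut$ proof into an $\sf X$ proof; for the detailed verification of all cases I would refer to \citep{troelstra&schwichtenberg00}.
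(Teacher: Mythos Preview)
The paper does not give its own proof of this theorem; it explicitly defers to the literature, citing \citep{troelstra&schwichtenberg00}. Your sketch is precisely the standard Gentzen--Troelstra--Schwichtenberg approach one finds there: a topmost-cut reduction by double induction on the complexity of the cutformula and the combined heights of the premise derivations, using depth-preserving admissibility of weakening and contraction (Lemma~\ref{lem:structuralrules}) and inversion (Lemma~\ref{lem:inversion}) to handle the permutation and principal--principal cases. In that sense you are fully aligned with what the paper intends, and your closing reference matches the paper's own.

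One correction worth making: there is no need to switch from degree $d(\cdot)$ to the weight function $w(\cdot)$ for the intuitionistic calculi $\Gthi$ and $\Gthqi$. The weight function of Subsection~\ref{sec:complmeas} is specifically Dyckhoff's device to make $\Gdyc$ terminating (it is engineered so that, for example, $w(\phi\imp(\psi\imp\gamma)) < w((\phi\en\psi)\imp\gamma)$, which degree does not give), and it plays no role in cut-elimination for the $\mathsf{G3}$-systems. Ordinary degree suffices uniformly across all four calculi: the single-conclusion restriction and the copied principal formula in the $\Gthi$ version of $(L\!\imp)$ are handled by the subsidiary height induction exactly as you describe, with no change of complexity measure required. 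This is a harmless slip (weight also decreases on subformulas, so your induction would still be well-founded), but it suggests a conflation of the termination argument for $\Gdyc$ with the cut-elimination argument for $\Gthi$, which are distinct.
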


Thus, with the {\sf G3}-calculi we have proof systems without structural rules but with the power of these rules. A striking example is \Gth, in which proof search is terminating without any side conditions on the search (for a definition of termination, see Subsection \ref{Sec: Terminating}). Proof search in \Gthi\ is also terminating, but under an additional side condition that we will not dwell on here. 

The cut-elimination has numerous consequences, among which are the following two. Theorem~\ref{thm:dec}  illustrates the usefulness of sequent calculi to prove properties about a logic, a phenomenon we also encounter in the sections on interpolation. Theorem~\ref{thm:equal} shows that for \CPC\ as well as \IPC\ there exist two sequent calculi, each of which has properties that make it useful in a given context, as explained in Subsection~\ref{sec:structural}.

\begin{theorem}
 \label{thm:dec}
Derivability in \Gth\ and \Gthi\ is decidable. Thus, \CPC\ and \IPC\ are decidable. 
\end{theorem}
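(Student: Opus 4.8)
The plan is to decide $\af_\Gth S$ by \emph{root-first} (backward) proof search: starting from $S$ at the root, one systematically tries every way of reading a rule of \Gth\ bottom-up, thereby generating the finitely-branching tree of all candidate derivations, and declares $S$ derivable exactly when one such tree is a genuine derivation (all its leaves being axioms). The whole issue is to guarantee that this search is finite, so that it can be inspected effectively. Two ingredients tame the search space. First, since \Gth\ contains no Cut rule and all of its rules merely decompose the principal formula, every sequent occurring anywhere above $S$ is built solely from subformulas of formulas in $S$, of which there are only finitely many; by Theorem~\ref{thm:cutelimination} restricting to such cut-free searches loses no derivability. Second, by the depth-preserving admissibility of Contraction (Lemma~\ref{lem:structuralrules}) I would restrict attention to sequents in which each subformula occurs with bounded multiplicity, equivalently treating antecedents and succedents as \emph{sets}, so that only finitely many distinct sequents can ever appear.

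For \Gth\ termination is then immediate from the degree ordering $\sml_d$ of Subsection~\ref{sec:complmeas}. Reading any logical rule of \Gth\ bottom-up, the principal compound formula disappears from the conclusion and is replaced in each premise by its immediate subformulas, which have strictly smaller degree; crucially, in \Gth\ no principal formula is copied into a premise. Hence every premise sits strictly $\sml_d$-below its conclusion. Since $\sml_d$ is a well-founded multiset order \citep{dershowitz&manna79} and the branching is finite, König's Lemma shows that every search tree is finite, so one can enumerate all candidate derivations and decide $\af_\Gth S$.

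For \Gthi\ the same degree argument breaks down at exactly one rule, $(L\!\imp)$, whose left premise retains the principal implication $\phi\imp\psi$ in the antecedent (this is the single-conclusion form of $(L\!\imp)$; cf.\ Lemma~\ref{lem:inversion}); consequently the degree need not decrease and naive search may loop. This is the ``additional side condition'' alluded to in the text, and I would resolve it by \emph{loop-checking}: a branch is abandoned as soon as it would repeat a sequent already occurring below it on that branch. This is sound, since a repetition can be spliced out to yield a shorter derivation of the same sequent, and, because the finitely many subformulas of $S$ (with bounded multiplicity) yield only finitely many distinct sequents, every loop-free branch has bounded length and the search tree is again finite. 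Decidability of the logics then follows from the defining property of a calculus for a logic: $\af_\CPC \phi$ iff $\af_\Gth (\ \seq \phi)$ and $\af_\IPC \phi$ iff $\af_\Gthi (\ \seq \phi)$, and both right-hand sides are decidable by the above.

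The hard part will be the intuitionistic case, namely controlling the non-decreasing $(L\!\imp)$ rule. Everything else is bookkeeping with the subformula property and the admissibility lemmas already in hand; the genuine content is the argument that, despite the recopied implication, the reachable search space stays finite, so that loop-checking (or, equivalently, working over the finite set of set-sequents built from subformulas of $S$) terminates.
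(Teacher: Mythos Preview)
Your proposal is correct and follows exactly the standard route that the paper defers to: the paper's own ``proof'' is merely a reference to Theorem~4.2.6 in \citep{troelstra&schwichtenberg00} for \Gthi\ and the remark that \Gth\ is easier, and what you have written is precisely that argument spelled out---root-first search, termination by $\sml_d$ for \Gth, and loop-checking over the finite set of subformula (set-)sequents for \Gthi\ to handle the recopied implication in $L\!\imp$. One small point to tighten: when you pass to set-sequents you should say explicitly that a minimal-depth \Gthi-derivation of a set-sequent can be taken to consist entirely of set-sequents (this is where depth-preserving contraction is actually used), since backward applications of rules like $L\en$ can otherwise raise multiplicities; with that made precise your argument is complete.
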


\begin{proof}
For the decidability of \IPC\ via a bottom-up proof search in \Gthi\ see Theorem 4.2.6 in \citep{troelstra&schwichtenberg00}. The result for \Gth\ and \CPC\ is similar and in fact easier.
\end{proof}

\begin{theorem}
 \label{thm:equal}
The sequent calculi \Gone (\Goneq) and \Gth (\Gthq) prove the same sequents. So do \Gonei (\Goneqi) and \Gthi (\Gthqi). 
\end{theorem}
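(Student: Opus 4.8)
The plan is to prove the two inclusions separately, each by a routine induction on the height of derivations. All four pairs in the statement (classical/intuitionistic $\times$ propositional/predicate) are treated uniformly, so I will describe the argument for $\Gone$ versus $\Gth$ and indicate the minor adjustments for the predicate and intuitionistic variants. No semantic input and, crucially, no appeal to cut is needed: the two directions are pure simulations.

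For the inclusion $\af_{\Gone} S \Rightarrow \af_{\Gth} S$, induct on a $\Gone$-derivation of $S$. Every axiom of $\Gone$ (such as $p\seq p$ and the $\bot$-axiom) is the empty-context instance of the corresponding generalized $\Gth$-axiom, so the base case is immediate. For a logical rule, the premises are $\Gth$-derivable by the induction hypothesis, and the $\Gth$-rule for the same connective (or quantifier) applies; the only connective rule that is shaped differently, $(R\of)$, is recovered in $\Gth$ from the other form by an application of the same rule together with a contraction, which is available since contraction is admissible in $\Gth$. The remaining cases are exactly the structural rules of $\Gone$, weakening and contraction, and these are precisely the rules shown to be depth-preserving admissible in $\Gth$ in Lemma~\ref{lem:structuralrules}. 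Hence each step of the $\Gone$-derivation is mimicked in $\Gth$.

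For the converse $\af_{\Gth} S \Rightarrow \af_{\Gone} S$, induct on a $\Gth$-derivation of $S$. A generalized axiom $\Ga, p \seq p, \De$ is obtained in $\Gone$ from the atomic axiom $p\seq p$ by weakening $\Ga$ on the left and $\De$ on the right, and likewise for the $\bot$-axiom, so all $\Gth$-axioms are $\Gone$-derivable. For a logical rule whose $\Gth$-form agrees with its $\Gone$-form one simply applies the induction hypothesis and the same rule. The rules requiring attention are those carrying a \emph{hidden contraction}: in $\Gthq$ and $\Gthqi$ the rules $(R\E)$ and $(L\A)$, which copy the principal formula into the premise, and in $\Gthi$ the rule $(L\!\imp)$, whose left premise is $\Ga,\phi\imp\psi\seq\phi$. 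Each such step is simulated in $\Gone$ (resp.\ $\Gonei$) by applying the structure-free $\Gone$-rule, after a weakening to restore the principal formula where the $\Gone$-rule does not retain it, and then a single contraction to delete the resulting duplicate. For instance, from the $\Gthq$-premise $\Ga\seq\phi(t),\E x\phi(x),\De$ one applies the $\Goneq$-rule $(R\E)$ to obtain $\Ga\seq\E x\phi(x),\E x\phi(x),\De$ and contracts; the $\Gthi$-rule $(L\!\imp)$ is handled from $\Ga,\phi\imp\psi\seq\phi$ and $\Ga,\psi\seq\De$ by weakening the latter to $\Ga,\phi\imp\psi,\psi\seq\De$, applying the $\Gonei$-rule, and contracting $\phi\imp\psi$.

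The argument is thus essentially bookkeeping, with all the content residing in Lemma~\ref{lem:structuralrules} (forward direction) and in the observation that the distinctive features of the $\sf G3$-calculi, namely generalized axioms and hidden contractions, are exactly what the explicit structural rules of the $\sf G1$-calculi supply (backward direction). The main place to be careful, and the closest thing to an obstacle, is keeping the simulation well-formed in the single-conclusion intuitionistic setting, where succedents must stay of size at most one throughout the $(L\!\imp)$ simulation, and in the predicate setting, where the eigenvariable side conditions of $(L\E)$ and $(R\A)$ must be preserved; since these conditions are literally the same in both calculi, they create no real difficulty. One could alternatively obtain the statement non-constructively from the fact that both calculi are sound and complete for the same logic, but the direct simulation above is self-contained and yields an explicit translation of proofs.
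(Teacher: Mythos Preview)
Your argument is correct and is the standard simulation proof: admissibility of weakening and contraction in the {\sf G3}-systems (Lemma~\ref{lem:structuralrules}) absorbs the structural rules in the forward direction, and explicit weakening plus contraction in the {\sf G1}-systems handles the generalized axioms and hidden contractions in the backward direction. The paper does not give its own proof at all but simply refers to Proposition~3.5.9 of Troelstra and Schwichtenberg, which carries out essentially the same two-way simulation you describe; so your approach coincides with the intended one, just spelled out in full.

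One small inaccuracy: in the formulations used in this paper, the rule $R\of$ has \emph{the same} shape in \Gone\ and \Gth\ (and likewise in \Gonei\ and \Gthi), so no contraction step is needed there. The genuine differences in the logical rules are exactly the ones you identify later: the hidden contractions in $L\A$, $R\E$ (predicate case) and in $L\!\imp$ (intuitionistic case). Removing the $R\of$ remark would make the write-up cleaner.
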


\begin{proof}
    See Proposition 3.5.9 in \citep{troelstra&schwichtenberg00}.
\end{proof}

Theorem~\ref{thm:equal} implies that \Gonei\ does not derive the law of excluded middle and Peirce's law, as predicted in Subsection~\ref{sec:gonei}. This follows from the fact that \Gthi\ does not derive these principles, a fact that is not hard to establish, given that \Gthi\ does not contain structural rules: a proof of $(\ \seq p\of\neg p)$ in \Gthi\ necessarily has to end with an application of $R\of$, but then the premise would be $(\ \seq p)$ or $(\ \seq \neg p)$, two clearly underivable sequents. The argument for Peirce's law is similar.

\subsection{Sequent Calculi for Other Logics}
 \label{sec:seqcalother}

We have seen several elegant sequent calculi for classical and intuitionistic logic. What is a beautiful fact is that there are many logics, be it intermediate, modal, conditional, or substructural, that have sequent calculi that are extensions of or are similar to the calculi we have seen. These calculi are as well-behaved as the ones discussed above, and therefore useful in the study of the mentioned logics. 

The discussion below describes sequent calculi for intermediate, modal, and conditional logics. Because of the lack of space, the usual calculi for standard substructural logics are not given. They are as elegant and useful as the calculi that are discussed here, pointers to the literature are given below and see \citep{Ono}.

\section{Existence of Sequent Calculi}
 \label{sec:exiseqcal} 
We have introduced proof systems for \IPC\ and \CPC, namely the sequent calculi \Gone, \Gth, \Gonei, \Gthi, and argued that they are good proof systems, at least in a given context. 
Thus, the question arises: which other logics have sequent calculi with certain desirable properties, such as the absence but admissibility of structural rules, or termination? 
A positive answer to that question for a given logic is often provided by developing a proof system for that logic and proving that it has the desirable properties one wishes it to have. But for a negative answer, in case the logic does not have good sequent calculi, it is not as clear how one should obtain such a result. How to prove that no sequent calculus from the class of calculi considered ``good'' can be a calculus for the logic? 

Although the answer to that question naturally depends on the precise definition of ``good'' that one wishes to use, the general methodology to approach this question as described in this section applies to various interpretations of ``good''. However, because the method can be better explained when applied to a concrete case, we consider a calculus good when it is terminating, and comment on the general case later on.

\subsection{Terminating Calculi} \label{Sec: Terminating}

A given calculus \G\ for a (modal) propositional logic is {\it terminating}\footnote{In \citep{iemhoff17b}  the notion is defined slightly differently.} if it satisfies the following properties: 

\begin{description}
\item[finite] 
\G\ consists of finitely many axioms and rules;
\item[instance finite] 
for every sequent $S$ there are at most finitely many instances of rules in \G\ of which $S$ is the conclusion;
\item[well-ordered] there is a well-order $\sml$ on sequents such that 
 \begin{itemize}
  \itm every proper subsequent comes before the sequent in $\sml$;
  \itm for every rule in \G\ the premises come before the conclusion in $\sml$. 
  \itm
  any sequent of the form $(\Gamma, \Pi \Rightarrow \Delta, \Lambda)$, where $\Pi \cup \Lambda$ is nonempty comes before a sequent of the form $(\Gamma, \Box \Pi \Rightarrow \Delta, \Box \Lambda)$.
 \end{itemize} 
\end{description}

Thus, we arrive at the question of which logics have a terminating calculus, and in particular at the problem of how to establish, for logics that do not have such calculi, the fact that they do not. The latter results, stating that a certain logic does not have a terminating calculus, we call {\it negative results}. 

\begin{table}[ht!]
\centering
\footnotesize{\begin{tabular}{ |c| c| c| c| }
\hline
name of axiom & axiom & name of logic & logic\\
\hline
$\mathsf{k}$ & $\Box (p \to p) \to (\Box p \to \Box q)$ & \K & \CPC $+ (\mathsf{k})$ \\
\hline
$\mathsf{d}$ & $\Box p \to \Diamond p$ &  \KD & \K $+ (\mathsf{d})$ \\
\hline
$\mathsf{4}$ & $\Box p \to \Box \Box p$ & \Kf & \K $+ (\mathsf{4})$ \\
\hline
$\mathsf{t}$ & $\Box p \to p$ & \Sf & \Kf $+ (\mathsf{t})$ \\
\hline
\textsf{l\"{o}b} & $\Box(\Box p \to p) \to \Box p$ & \GL & \K $+$ (\textsf{l\"{o}b}) \\
\hline
\textsf{Grz} & $\Box(\Box(\phi \to \Box \phi) \to \phi) \to \phi$ & \Grz & \K $+$ (\textsf{Grz}) \\
\hline
\end{tabular}}
\caption{\small{Some modal axioms and normal modal logics.}}\label{tableAxiom}
\end{table}

In the literature on the proof theory of modal logics, there are several sequent calculi (defined in Subsection~\ref{sec:intmodal}) that are indeed terminating with respect to the order $\sml_d$ on sequents defined in Subsection~\ref{sec:complmeas}: $\Gth$, $\GthK$, and $\GthKD$.  
Thus, for the corresponding logics \CPC, \K, and \KD, the question has been answered positively in the literature. For definitions of these logics, see Table \ref{tableAxiom}. On the other hand, $\GthSf$ and $\GthGL$ are not terminating, at least not with the order $\sml_d$, because these calculi are not well-ordered in this order. In fact, we do not know whether there exist terminating calculi for these logics; the method that we are going to develop does not apply to them (yet). For more on proof theory of modal logic, see \cite{negri11}.

When turning to logics based on intuitionistic logic one naturally turns to the calculus \Gthi\ for \IPC\ and tries to extend it to a calculus for the logic one is considering, it being intermediate or an intuitionistic modal logic. But the fact that \Gthi\ is well-ordered neither in $\sml_d$ nor $\sml_w$ blocks that approach. There is, however, a variant of \Gthi\ that is terminating with respect to $\sml_w$, the calculus \Gdyc\ introduced by \cite{dyckhoff92} that is given in Figure~\ref{fig:Gdyc}. This calculus will be the basis for the calculi for the intuitionistic modal and intermediate logics treated in this paper. It is not hard to see that \Gdyc\ is indeed terminating. For a proof of its equivalence to \Gthi\ we refer the reader to Dyckhoff's paper. Thus, \IPC\ has a terminating calculus. It is worth mentioning that the terminating calculus for IPC was developed independently by \cite{dyckhoff92} and \cite{hudelmaier88,hudelmaier92,hudelmaier93}.

\subsection{Method Towards Negative Results}
 \label{sec:method}
How can one prove negative results, results stating that certain logics do not have terminating calculi? Here we introduce a possible method, one that is easy to explain but requires ingenuity to apply. It can be applied not only to the class of terminating sequent calculi, but to any class of sequent calculi $\SC$ that one is interested in, and we will formulate it in this general form. 

Consider a class of sequent calculi $\SC$ (e.g.\ the terminating sequent calculi) and a class of logics $\CL$ (e.g.\ intermediate, modal, etc.). Suppose that there is a property of logics $\PL$ and a subset $\SCL$ of $\CL$ such that one can show the following:
\begin{description}
\item[\rm (I)] Any logic in $\CL$ that has a sequent calculus in $\SC$ has property $\PL$; 
\item[\rm (II)] No logic in $\SCL$ has property $\PL$. 
\end{description}
Then, one can conclude by simple contraposition:  
\begin{description}
\item[\rm (III)] No logic in $\SCL$ has a sequent calculus in $\SC$  (conclusion).
\end{description}
If the property $\PL$ is rare, meaning that not many logics in $\CL$ have $\PL$, and the class of logics $\SCL$ and the class of calculi $\SC$ are large, then the conclusion states that there is a large class of logics, namely $\SCL$, such that for no logic in that class can there be a sequent calculus in the large class $\SC$. Exactly the kind of negative results we are looking for.  
Thus, the general aim is to find $\SC$ and $\SCL$ that are large and a property $\PL$ that is relatively rare among the logics in $\CL$.  

In this paper, the property $\PL$ that we are mostly considering is (Craig, uniform, Lyndon) interpolation, defined in Subsections~\ref{sec:inter} and \ref{sec:uip} and Section~\ref{sec:positive}. The class of sequent calculi $\SC$ is, in most settings, a subclass of the (terminating) calculi, usually the largest class of (terminating)  calculi to which the method has been successfully applied, but not ruling out that there are more calculi for which (I) holds. The results of (II) we mainly obtain from the literature. These are the results stating that certain logics do not have (Craig, uniform, Lyndon) interpolation. Since these results are often proved with tools from algebraic logic, the whole method shows a nice interplay between that area and proof theory.

Uniform (Lyndon) interpolation is rarer than interpolation and therefore the better choice for the property $\PL$. However, for this exposition, we have chosen to illustrate the method for the case that $\PL$ is the property of interpolation. The advantage is that this case still shows the spirit of the method, but without the technical complexities and conceptual difficulties that come with the study of uniform interpolation. References to papers on the full method will be given during the exposition.

\subsection{Intermediate Logics}
 \label{sec:inter}
We describe the details of the above method, introduced in Subsection~\ref{sec:method}, to obtain negative results for the class of intermediate logics ($\CL$). The class of calculi $\SC$ that we use is a subclass of the class of terminating sequent calculi, to be defined below. As the property $\PL$ we use interpolation. 

Let us remind the reader that a propositional (modal) logic $\lgc$  has {\it Craig Interpolation Property} (CIP), also called {\em interpolation}, if for every implication $\phi \imp\psi$ that is derivable in $\lgc$ there exists a formula $\alpha$ (the {\it interpolant}) such that both $\phi \imp\alpha$ and $\alpha \imp\psi$ are derivable in $\lgc$ and $\alpha$ is in the {\it common language} of $\phi$ and $\psi$, meaning that every atom that occurs in $\alpha$ occurs in both $\phi$ and $\psi$.

\subsubsection{Sequent Interpolation for \Gthi\ }
We start with a proof-theoretic proof of interpolation for \IPC. The proof uses the calculus \Gthi\ and the following sequent version from which the standard interpolation property follows. 
A calculus \G\ has {\it sequent interpolation} if for every partitioned sequent $\Ga;\Pi\seq \De$ (partitions are defined in Subsection~\ref{sec:seq}) there is a formula $\alpha$ in the common language of $(\Ga\seq \ )$ and $(\Pi\seq \De)$ such that $\Ga \seq \alpha$ and $\Pi,\alpha\seq\De$ are derivable in \G. Here the common language of $\Ga;\Pi\seq \De$ is by definition the common language of $\Ga \seq \ $ and $\Pi\seq\De$, which means the atoms that occur in some formula in $\Ga$ as well as in some formula of $\Pi$ or $\De$. 

The proof of Theorem~\ref{thm:dycint}, which states that \Gthi\ has sequent interpolation, uses two lemmas that we prove first.

\begin{lemma}
 \label{lem:axint} 
Every partition of every instance of an axiom of \Gthi\ has sequent interpolation. 
\end{lemma}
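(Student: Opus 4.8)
The plan is a direct case analysis, since the axioms of \Gthi\ are simple enough that an interpolant can be written down explicitly for every partition. Recall that an instance of an axiom of \Gthi\ has one of two shapes: an \emph{identity} axiom $\phi,\Sig\seq\phi$ (an instance of $p,\Ga\seq p$) or a $\bot$-axiom $\bot,\Sig\seq\De$, where in both cases the succedent is single-conclusion. I would fix an arbitrary partition $\Ga;\Pi\seq\De$ of such an instance and note that the only real freedom lies in whether the active formula of the axiom---the distinguished $\phi$, respectively the $\bot$---occurs in the left block $\Ga$ or only in the right block $\Pi$. For each placement I would exhibit a candidate $\alpha$ and then check the two things required by the definition of sequent interpolation: that both $\Ga\seq\alpha$ and $\Pi,\alpha\seq\De$ are again (instances of) axioms of \Gthi, and that $\alpha$ lies in the common language of $(\Ga\seq\ )$ and $(\Pi\seq\De)$.

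First, suppose the active formula occurs only in the right block $\Pi$. Then I would take $\alpha=\top$. As $\top$ contains no atom it is vacuously in the common language, and $\Ga\seq\top$ is derivable. Moreover $\Pi,\top\seq\De$ is itself an axiom instance: if the original axiom is the identity axiom then $\De=\phi$ with $\phi\in\Pi$, and if it is the $\bot$-axiom then $\bot\in\Pi$, so in either case the active formula still witnesses the axiom in $\Pi,\top\seq\De$.

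Next, suppose the active formula occurs in the left block $\Ga$. For the identity axiom I would take $\alpha=\phi$: then $\Ga\seq\phi$ is an identity axiom instance (since $\phi\in\Ga$) and $\Pi,\phi\seq\phi$ is an identity axiom instance as well; and $\alpha$ is in the common language because every atom of $\phi$ occurs in $\Ga$ (as $\phi\in\Ga$) and in $\De$ (as $\De=\phi$). For the $\bot$-axiom I would instead take $\alpha=\bot$: then $\Ga\seq\bot$ is a $\bot$-axiom instance (since $\bot\in\Ga$), $\Pi,\bot\seq\De$ is a $\bot$-axiom instance, and $\bot$ is trivially in the common language. The case of an empty succedent $\De$ in the $\bot$-axiom is handled verbatim, and these placements exhaust the possibilities.

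There is essentially no obstacle here; the lemma is genuinely routine, and the only points needing care are bookkeeping ones: correctly enumerating the axiom shapes of \Gthi, allowing for extra copies of the active formula in the opposite block, treating the empty-succedent $\bot$-axiom, and---most importantly---verifying the common-language side condition and not merely the derivability of the two interpolating sequents. The single idea that makes every case go through is the use of the atom-free constants $\top$ and $\bot$ as ``neutral'' interpolants whenever the shared formula sits on the side that would otherwise force a non-common atom into $\alpha$. The sequent $\Ga\seq\top$ needed in the first case is itself an axiom (or, if $\top$ is treated as defined, immediately derivable), so this causes no difficulty.
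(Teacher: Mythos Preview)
Your proposal is correct and follows essentially the same approach as the paper's own proof: a direct case split on which side of the partition contains the active formula, using $\top$ when it sits in $\Pi$ and using the active formula itself ($p$, respectively $\bot$) when it sits in $\Ga$. The paper's proof is terser and keeps the identity axiom at the atomic level $p$ rather than a substituted formula $\phi$, but the argument is identical in substance.
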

\begin{proof}
Let $S=(\Ga;\Pi\seq \De)$ be a partition of an instance of an axiom of \Gthi. 
In case $S$ is an instance of $L\bot$, then the interpolant can be $\top$ in case $\bot\in\Pi$ and $\bot$ otherwise. In case $S$ is an instance of the other axiom, there are several cases to consider: if $p\in \Pi\cap\De$, then $\top$ is an interpolant, and if $p\in \Ga\cap\De$, then $p$ is in the common language and an interpolant. 
\end{proof}

\begin{lemma}
 \label{lem:dycint} 
For every instance of a rule $\rsch$ in \Gthi, for every partition $\Ga;\Pi\seq \De$ of its conclusion there are  partitions $\Ga_i;\Pi_i \seq \De_i$ of its $n$ premises and a formula $\gamma(p_1,\dots,p_n)$ such that $\gamma(\alpha_1,\dots,\alpha_n)$ is an interpolant of $\Ga;\Pi\seq \De$ whenever $\alpha_i$ is an interpolant of $\Ga_i;\Pi_i \seq \De_i$ for every $i=1,\dots,n$. 
\end{lemma}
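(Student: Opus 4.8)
The plan is to prove the lemma by a case analysis on the rule $\rsch$, and, inside each rule, a secondary split according to which block of the conclusion's partition the principal formula is assigned to. Recall that in a partitioned sequent $\Ga;\Pi\seq\De$ the succedent $\De$ is by convention kept on the $\Pi$-side, so a partition of the conclusion amounts to sorting each antecedent formula into $\Ga$ or into $\Pi$; this choice, together with the shape of $\rsch$, dictates how I partition the premises. For each case I exhibit the induced premise partitions and a fixed propositional combining formula $\gamma(p_1,\dots,p_n)$, and then check the two derivability conditions $\Ga\seq\gamma(\alpha_1,\dots,\alpha_n)$ and $\Pi,\gamma(\alpha_1,\dots,\alpha_n)\seq\De$ (derivable in \Gthi), together with the requirement that $\gamma(\alpha_1,\dots,\alpha_n)$ lie in the common language. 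Weakening is used freely via Lemma~\ref{lem:structuralrules}, and no cut is needed.

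For the rules that decompose a single formula the combining formula is almost forced. When $\rsch$ is $(L\en)$ (in either block), $(R\of_1)$, $(R\of_2)$, or $(R\!\imp)$, one keeps the premise interpolant unchanged, $\gamma(p_1)=p_1$, and recovers the two conditions by a single application of $\rsch$ itself (for $(R\!\imp)$ one partitions the premise $\Ga',\phi\seq\psi$ by placing $\phi$ on the $\Pi$-side). When $\rsch$ is $(R\en)$, or $(L\of)$ with principal formula in $\Pi$, one takes $\gamma(p_1,p_2)=p_1\en p_2$; when $\rsch$ is $(L\of)$ with principal formula in $\Ga$, one takes $\gamma(p_1,p_2)=p_1\of p_2$. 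In each of these the left condition is obtained by a right rule applied to the premise interpolants and the right condition by the matching left rule, and the common-language condition is immediate since the atoms of the principal formula include those of its immediate subformulas.

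The genuinely delicate rule, which I expect to be the main obstacle, is $(L\!\imp)$, whose \Gthi-instance has premises $\Ga',\phi\imp\psi\seq\phi$ and $\Ga',\psi\seq\De$ and conclusion $\Ga',\phi\imp\psi\seq\De$ (note the copied principal formula). If $\phi\imp\psi$ is assigned to $\Pi$, write $\Pi=\Pi_0,\phi\imp\psi$ and partition the premises as $\Ga;(\Pi_0,\phi\imp\psi)\seq\phi$ and $\Ga;(\Pi_0,\psi)\seq\De$; taking $\gamma(p_1,p_2)=p_1\en p_2$, the left condition follows by $(R\en)$ and the right condition $\Pi_0,\phi\imp\psi,\alpha_1\en\alpha_2\seq\De$ by one application of $(L\!\imp)$ whose two premises come, after weakening and $(L\en)$, from the two premise interpolants. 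The hard subcase is $\phi\imp\psi\in\Ga$, where $\phi\imp\psi$ no longer occurs on the $\Pi$-side, so deriving $\phi$ there is impossible and the tempting choice $\alpha_1\imp\alpha_2$ fails outright. The remedy is to partition the \emph{first} premise in swapped form, taking $\Pi_0$ as the left block and $\Ga_0,\phi\imp\psi$ as the right block, which yields an interpolant $\alpha_1$ with $\Pi_0\seq\alpha_1$ and $\alpha_1,\Ga_0,\phi\imp\psi\seq\phi$; the second premise is partitioned normally as $(\Ga_0,\psi);\Pi_0\seq\De$, giving $\alpha_2$ with $\Ga_0,\psi\seq\alpha_2$ and $\alpha_2,\Pi_0\seq\De$. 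Now $\gamma(p_1,p_2)=p_1\imp p_2$ succeeds: for the left condition $\Ga_0,\phi\imp\psi\seq\alpha_1\imp\alpha_2$, apply $(R\!\imp)$ and then $(L\!\imp)$ on $\phi\imp\psi$, whose left premise $\alpha_1,\Ga_0,\phi\imp\psi\seq\phi$ is at hand and whose right premise $\Ga_0,\psi,\alpha_1\seq\alpha_2$ comes from $\alpha_2$ by weakening; for the right condition $\alpha_1\imp\alpha_2,\Pi_0\seq\De$, apply $(L\!\imp)$ whose left premise follows from $\Pi_0\seq\alpha_1$ by weakening and whose right premise is $\alpha_2,\Pi_0\seq\De$.

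Finally I would verify the common-language condition in this swapped case, the only place it is not entirely transparent: every atom of $\alpha_1$ occurs both in $\Pi_0$ and in $\Ga_0,\phi\imp\psi$, and every atom of $\alpha_2$ occurs both in $\Ga_0,\psi$ and in $\Pi_0,\De$, so all atoms of $\alpha_1\imp\alpha_2$ occur in $\Ga_0,\phi\imp\psi$ as well as in $\Pi_0,\De$, i.e.\ in the common language of the conclusion's partition. Since the lemma only asserts the existence of \emph{some} partitions of the premises, the swapped choice for the first premise is legitimate, which completes the case analysis and hence the lemma.
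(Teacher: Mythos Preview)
Your proof is correct and follows the same case-analysis strategy as the paper. The paper's own proof of this lemma only writes out the three right rules $R\of$, $R\en$, and $R\!\imp$, leaving the remaining cases (in particular the left rules) to the reader, whereas you work through all of them, including the genuinely delicate $L\!\imp$ case. Your ``swapped'' partition of the first premise when $\phi\imp\psi$ sits in $\Ga$ is exactly the right move and is the same idea the paper deploys later, in the proof of Theorem~\ref{thm:G4ip}, for the $L\!\imp\imp$ rule of \Gdyc; there too the partition of the left premise is reversed so that the interpolant becomes an implication $\alpha_1\imp\alpha_2$. So your treatment is not a different route but rather a fuller execution of the intended one.
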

\begin{proof}
We treat the case that $\rsch$ is $R\of$, $R\en$, and $R\!\imp$. In the first case, the last inference is  
\[\small 
 \AxiomC{$\Ga,\Pi \seq \phi$}
 \RightLabel{{\footnotesize $R\of$}}
 \UnaryInfC{$\Ga,\Pi \seq \phi\of\psi$}
 \DisplayProof 
\]
Let $\Ga;\Pi \seq \phi \vee \psi$ be the partition of the conclusion. Consider the partition $\Ga;\Pi \seq \phi$ of the premise and suppose $\beta$ is its interpolant. Thus, $\Ga\seq\beta$ and $\Pi,\beta\seq\phi$ are derivable, which implies that so is $\Pi,\beta\seq\phi \of\psi$. Hence, 
$\beta$ satisfies the conditions of being an interpolant for the conclusion and let $\gamma(p_1)$ be $p_1$.  
In the second case, the last inference is  
\[\small 
 \AxiomC{$\Ga,\Pi \seq \phi_1$}
 \AxiomC{$\Ga,\Pi \seq \phi_2$}
 \RightLabel{{\footnotesize $R\en$}}
 \BinaryInfC{$\Ga,\Pi \seq \phi_1\en\phi_2$}
 \DisplayProof 
\]
Let $\Ga;\Pi \seq \phi_1 \wedge \phi_2$ be the partition of the conclusion. Consider the partitions $\Ga;\Pi \seq \phi_i$ of the premises and suppose the $\alpha_i$ are their interpolants. Thus, $\Ga\seq\alpha_i$ and $\Pi,\alpha_i\seq\phi$ are derivable, which implies that so are $\Ga\seq\alpha_1\en\alpha_2$ and $\Pi,\alpha_1\en\alpha_2\seq\phi \en\psi$. Hence, 
$\alpha_1\en\alpha_2$ satisfies the conditions of being an interpolant for the conclusion and let $\gamma(p_1,p_2)$ be $p_1\en p_2$.

In the third case, $R\!\imp$, the last inference is of the form 
\[\small 
 \AxiomC{$\Ga,\Pi,\phi \seq \psi$}
 \RightLabel{{\footnotesize $R\!\imp$}}
 \UnaryInfC{$\Ga,\Pi \seq \phi \imp \psi$}
 \DisplayProof 
\]
Let $\Ga;\Pi \seq \phi \to \psi$ be the partition of the conclusion. Consider the partition $\Ga;\Pi,\phi \seq \psi$ of the premise and let $\beta$ be its interpolant. Hence, we have derivable sequents $\Ga \seq \beta$ and $\Pi,\phi,\beta \seq \psi$. Thus, 
$\beta$ satisfies the conditions of being an interpolant for the conclusion, as $\beta$ is in the common language of the partition of the premise,  and let $\gamma(p_1)$ be $p_1$.
\end{proof}

\begin{theorem}
 \label{thm:dycint}
\Gthi\ has sequent interpolation: For every partitioned sequent $\Ga;\Pi\seq \De$ derivable in \Gthi\ there is a formula $\alpha$ in the common language of $(\Ga\seq \ )$ and $(\Pi\seq \De)$ such that $\Ga \seq \alpha$ and $\Pi,\alpha\seq\De$ are derivable in $\Gthi$. 
\end{theorem}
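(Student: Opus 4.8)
The plan is to prove the statement by straightforward induction on the height of the \Gthi-derivation of the underlying sequent, with the partition held fixed. A \emph{partitioned} sequent $\Ga;\Pi\seq\De$ carries both the data of a derivable sequent $\Ga,\Pi\seq\De$ and a choice of how to split its antecedent; so I would fix an arbitrary partition and induct on the derivation of $\Ga,\Pi\seq\De$ in \Gthi. The two preceding lemmas are tailored precisely to supply the base case and the inductive step, so the only real work is to assemble them correctly: Lemma~\ref{lem:axint} handles leaves, and Lemma~\ref{lem:dycint} propagates interpolants across a single rule application.

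For the base case, when the derivation is a single node, $\Ga,\Pi\seq\De$ is an instance of an axiom of \Gthi. Lemma~\ref{lem:axint} states that every partition of every axiom instance has sequent interpolation, so the desired interpolant $\alpha$ for the fixed partition exists immediately. For the inductive step, suppose $\Ga,\Pi\seq\De$ is the conclusion of an instance of some rule $\rsch$ of \Gthi\ with premises $S_1,\dots,S_n$, each derivable by a strictly shorter derivation. Applying Lemma~\ref{lem:dycint} to this rule instance and to the fixed partition of the conclusion yields partitions $\Ga_i;\Pi_i\seq\De_i$ of the premises together with a combining formula $\gamma(p_1,\dots,p_n)$. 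By the induction hypothesis, applied to each premise partition (legitimate because each premise is itself derivable, with a shorter derivation), there are interpolants $\alpha_i$ of $\Ga_i;\Pi_i\seq\De_i$. Lemma~\ref{lem:dycint} then guarantees that $\gamma(\alpha_1,\dots,\alpha_n)$ is an interpolant of $\Ga;\Pi\seq\De$, which closes the induction.

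The one point requiring care, rather than a genuine obstacle, is matching the data: the premise partitions are not freely chosen but are exactly the ones dictated by Lemma~\ref{lem:dycint}, and the induction hypothesis must be invoked on precisely those partitions. Since the ``interpolant'' property already bundles together derivability of $\Ga\seq\alpha$ and $\Pi,\alpha\seq\De$ with membership in the common language, the conclusion $\gamma(\alpha_1,\dots,\alpha_n)$ automatically lands in the correct common language and satisfies the two derivability requirements; no separate language-tracking argument is needed here because it was already discharged inside Lemma~\ref{lem:dycint}. The induction covers all cases provided Lemma~\ref{lem:dycint} was established for \emph{every} rule of \Gthi; the representative cases $R\of$, $R\en$, and $R\!\imp$ are treated there explicitly, and the remaining rules are handled analogously. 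Thus essentially all the interpolation-theoretic content lives in the two lemmas, and the theorem follows by combining them inductively.
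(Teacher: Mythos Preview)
Your proposal is correct and follows essentially the same approach as the paper: induction on the depth of the \Gthi-derivation, with Lemma~\ref{lem:axint} supplying the base case and Lemma~\ref{lem:dycint} the inductive step. The paper's proof is simply a two-sentence version of what you spell out in more detail.
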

\begin{proof}
We use induction on the depth of the derivation of $S =(\Ga;\Pi\seq \De)$. The case that $S$ is an instance of an axiom is covered by Lemma~\ref{lem:axint}. If $S$ is not an axiom and the conclusion of an application of a rule in \Gthi, then Lemma~\ref{lem:dycint} applies, which completes the proof. 
\end{proof}

The above is a constructive proof of the following well-known theorem for \IPC. 


\begin{corollary}
\IPC\ has Craig interpolation. 
\end{corollary}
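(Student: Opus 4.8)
The plan is to derive Craig interpolation for \IPC\ directly from the sequent interpolation of \Gthi\ established in Theorem~\ref{thm:dycint}, by translating between formula-derivability in \IPC\ and sequent-derivability in \Gthi. No further proof-theoretic work is required: the argument is essentially bookkeeping, exploiting the fact that \Gthi\ is a calculus for \IPC. First I would suppose $\phi\imp\psi$ is derivable in \IPC. Since \Gthi\ is a calculus for \IPC, the correspondence $\af_\Gthi(\Ga\seq\De)$ iff $\af_\IPC(\ben\Ga\imp\bof\De)$, applied with $\Ga=\{\phi\}$ and $\De=\{\psi\}$, yields that the sequent $\phi\seq\psi$ is derivable in \Gthi. (Alternatively, one may start from $\af_\Gthi(\ \seq\phi\imp\psi)$ and invert $(R\!\imp)$ using the Inversion Lemma~\ref{lem:inversion} to reach the same sequent.)

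Next I would view this as the partitioned sequent $\phi\,;\seq\psi$, i.e.\ with $\Ga=\{\phi\}$, $\Pi=\varnothing$, and $\De=\{\psi\}$, and apply Theorem~\ref{thm:dycint}. This produces a formula $\alpha$ in the common language of $(\phi\seq\ )$ and $(\seq\psi)$ such that both $\phi\seq\alpha$ and $\alpha\seq\psi$ are derivable in \Gthi. By the definition of the common language of a partition, every atom of $\alpha$ occurs both in $\phi$ and in $\psi$, which is exactly the condition that $\alpha$ lie in the common language of $\phi$ and $\psi$ in the sense of CIP.

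Finally, translating back through the same calculus-for-logic correspondence, $\af_\Gthi(\phi\seq\alpha)$ gives $\af_\IPC(\phi\imp\alpha)$ and $\af_\Gthi(\alpha\seq\psi)$ gives $\af_\IPC(\alpha\imp\psi)$. Together with the language condition, this exhibits $\alpha$ as a Craig interpolant for $\phi\imp\psi$, establishing CIP for \IPC.

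There is no genuinely hard step here; the only place that merits care is checking that the notion of common language used in sequent interpolation coincides, for the chosen partition, with the common language of $\phi$ and $\psi$ required by CIP, which is immediate once the definitions are unfolded. It is also worth noting the degenerate case in which $\phi$ and $\psi$ share no atom: then $\alpha$ must be built from $\top$ and $\bot$ alone, which is consistent with the underlying proof, where such constant interpolants already arise at the axioms (Lemma~\ref{lem:axint}).
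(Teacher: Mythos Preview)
Your proposal is correct and matches the paper's intended argument: the paper simply remarks that Theorem~\ref{thm:dycint} constitutes a constructive proof of the corollary, leaving the translation between sequent interpolation for \Gthi\ and Craig interpolation for \IPC\ implicit, and you have spelled out precisely that translation.
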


\subsubsection{Generalization of Sequent Interpolation for \Gdyc}
A similar result as in Theorem \ref{thm:dycint} can be proved for the terminating sequent calculus \Gdyc.
\begin{theorem}
 \label{thm:G4ip}
\Gdyc\ has sequent interpolation: For every partitioned sequent $\Ga;\Pi\seq \De$ derivable in \Gdyc\ there is a formula $\alpha$ in the common language of $(\Ga\seq \ )$ and $(\Pi\seq \De)$ such that $\Ga \seq \alpha$ and $\Pi,\alpha\seq\De$ are derivable in $\Gdyc$. 
\end{theorem}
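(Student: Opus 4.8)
The plan is to prove Theorem~\ref{thm:G4ip} by mirroring the inductive argument behind Theorem~\ref{thm:dycint}, now for the calculus \Gdyc\ of Figure~\ref{fig:Gdyc}. I would run an induction on the depth of the derivation of the partitioned sequent $\Ga;\Pi\seq\De$ in \Gdyc, supported by two lemmas: first, an analogue of Lemma~\ref{lem:axint} stating that every partition of every axiom instance of \Gdyc\ has sequent interpolation; and second, an analogue of Lemma~\ref{lem:dycint} stating that each rule of \Gdyc\ propagates interpolants, i.e.\ for every partition of the conclusion there are partitions of the premises and a combining formula $\gamma(p_1,\dots,p_n)$ such that $\gamma(\alpha_1,\dots,\alpha_n)$ interpolates the conclusion whenever each $\alpha_i$ interpolates the corresponding premise. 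Since \Gdyc\ and \Gthi\ have the same axioms, the first lemma is literally Lemma~\ref{lem:axint}, so all the work sits in the second.

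For the rule-propagation lemma I would reuse the constructions of Lemma~\ref{lem:dycint} for the right rules and for the connective rules shared with \Gthi: the right rules take $\gamma=p_1$ or $\gamma=p_1\en p_2$, and one-premise left rules are handled likewise. The rules that are genuinely new in \Gdyc\ are the left rules for an implication with an atomic or compound antecedent. For those that merely rewrite the principal implication into equivalent hypotheses on the left while keeping $\De$ on the right (the atomic case and the cases with antecedents $A\en B$ and $A\of B$), the premise has the same succedent and the principal stays in the same partition block, so the interpolant carries over with $\gamma=p_1$; the common-language bookkeeping goes through because the atoms of $A,B,C$ all occur in the principal formula, whence the common language of each premise sits inside that of the conclusion.

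The main obstacle is the two-premise left rule, namely the one whose principal formula has the form $(A\imp B)\imp C$, and within it the case where the principal sits in the \emph{left} block $\Ga$ of the partition. The difficulty is that one premise of this rule carries a subformula of the principal into the \emph{succedent}, so a formula whose atoms belong to the left language appears on the right-hand side of the partition; the naive conjunction/disjunction combiner used for the other left rules then fails. I expect the correct combiner to be an implication, $\gamma=\beta_1\imp\beta_2$ with $\beta_1,\beta_2$ the premise interpolants, and verifying the two conditions $\Ga\seq\gamma$ and $\Pi,\gamma\seq\De$ should require re-deriving through the same two-premise rule, together with the depth-preserving weakening of Lemma~\ref{lem:structuralrules} and the inversion of Lemma~\ref{lem:inversion}. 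This is exactly the kind of case that was left to the reader for $(L\!\imp)$ in Lemma~\ref{lem:dycint}, and it is where the care is needed; the complementary sub-case (principal in $\Pi$) is easier, since there both subformulas stay in the right language and $\gamma=\beta_1\en\beta_2$ works.

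Finally, it is worth noting a shortcut that sidesteps the rule-by-rule analysis altogether: because \Gdyc\ and \Gthi\ prove exactly the same sequents (Dyckhoff's equivalence), sequent interpolation for \Gdyc\ follows at once from Theorem~\ref{thm:dycint}. Indeed, if $\Ga;\Pi\seq\De$ is derivable in \Gdyc\ then $\Ga,\Pi\seq\De$ is derivable in \Gthi, so Theorem~\ref{thm:dycint} supplies an interpolant $\alpha$ in the common language with $\Ga\seq\alpha$ and $\Pi,\alpha\seq\De$ derivable in \Gthi, and these two sequents are then derivable in \Gdyc\ again by equivalence. I would nonetheless carry out the direct proof as well, since it is this rule-by-rule template, and not the appeal to \Gthi, that generalizes to the modal and intermediate calculi built on \Gdyc\ and thereby feeds the negative-results method of Subsection~\ref{sec:method}.
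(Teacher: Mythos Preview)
Your overall plan matches the paper's: induction on depth, reuse Lemma~\ref{lem:axint} for axioms, and prove an analogue of Lemma~\ref{lem:dycint} for the \Gdyc\ rules. You also correctly isolate $L\!\imp\!\imp$ as the delicate case and guess the right combiners ($\alpha_1\en\alpha_2$ when the principal is in $\Pi$, $\alpha_1\imp\alpha_2$ when it is in $\Ga$). The shortcut via the \Gdyc--\Gthi\ equivalence is valid; the paper does not take it, for the reason you give.

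There is, however, a genuine gap in your treatment of $Lp\!\imp$. You say the principal stays in one block and the interpolant carries over with $\gamma=p_1$, but in $Lp\!\imp$ \emph{two} formulas are displayed on the left, $p$ and $p\imp\phi$, and a partition can separate them. If $p\in\Ga$ and $p\imp\phi\in\Pi$, your premise interpolant $\beta$ gives $\Pi',\phi,\beta\seq\De$, and you cannot apply $Lp\!\imp$ to obtain $\Pi',p\imp\phi,\beta\seq\De$ because the required atom $p$ is not in $\Pi'$. The paper fixes this by taking $\beta\en p$ (and dually $p\imp\beta$ when $p\imp\phi\in\Ga$, $p\in\Pi$), using that $p$ lies in the common language of the conclusion. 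So $\gamma=p_1$ is wrong in these cross cases.

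For $L\!\imp\!\imp$ with the principal in $\Ga$ you are on the right track but miss the key manoeuvre: the paper \emph{swaps} the roles of $\Ga$ and $\Pi$ when partitioning the left premise, taking $\Pi';\phi_2\imp\phi_3,\Ga'\seq\phi_1\imp\phi_2$ so that the succedent $\phi_1\imp\phi_2$ (whose atoms live in the $\Ga$-language) sits on the $\Ga$-side. Without this swap the language condition on $\alpha_1$ fails. Inversion is not used here; what is needed is one application of $L\!\imp\!\imp$ (plus weakening) to get $\Ga',(\phi_1\imp\phi_2)\imp\phi_3\seq\alpha_1\imp\alpha_2$, and then the admissibility of the ordinary $L\!\imp$ in \Gdyc\ to combine $\Pi'\seq\alpha_1$ and $\alpha_2,\Pi'\seq\De$ into $\Pi',\alpha_1\imp\alpha_2\seq\De$.
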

\begin{proof}
We use the analog of Lemma \ref{lem:dycint} for \Gdyc. 
The new cases for the rule $\rsch$, compared to the proof of Lemma \ref{lem:dycint}, are the rules $Lp\!\imp$ and $L\!\imp\imp$.

In the case $Lp\!\imp$, the last inference is of the form 
\[\small 
 \AxiomC{$\Ga',\Pi',p,\phi \seq \De$}
 \RightLabel{{\footnotesize $Lp\!\imp$}}
 \UnaryInfC{$\Ga',\Pi', p, p\imp\phi\seq \De$}
 \DisplayProof 
\]
In case $p,p\imp\phi$ belongs to the same partition, the interpolant of the conclusion is the same for the premise. We consider the two remaining cases. First suppose the partition of $S$ is $\Ga',p;\Pi',p\imp \phi \seq \De$ and let $\beta$ be the interpolant of the premise which is partitioned as $\Ga',p;\Pi',\phi \seq \De$. Since $p$ belongs to the common language of the partitioned $S$, 
$\beta\en p$ satisfies the conditions of being an interpolant for the conclusion and we take 
$\gamma(p_1)$ to be $p_1\en p$. In case the partition of $S$ is $\Ga',p\imp\phi;\Pi',p \seq \De$, partition the premise as $\Ga',\phi;\Pi',p \seq \De$ and let $\beta$ be its interpolant. Since $p$ belongs to the common language of $S$, 
$p\imp \beta$ satisfies the conditions of being an interpolant for the conclusion and we take $\gamma(p_1)$ to be $p\imp p_1$.

In the case $L\!\imp\imp$, the last inference is of the form 
\[\small 
 \AxiomC{$\Ga',\Pi',\phi_2 \imp \phi_3 \seq \phi_1\imp\phi_2$}
 \AxiomC{$\Ga',\Pi',\phi_3\seq \De$}
 \RightLabel{{\footnotesize $L\!\imp\imp$}}
 \BinaryInfC{$\Ga',\Pi', (\phi_1 \imp \phi_2) \imp \phi_3 \seq \De$}
 \DisplayProof 
\]
First, suppose the partition of $S$ is $\Ga';(\phi_1 \imp \phi_2) \imp \phi_3,\Pi' \seq \De$ and let $\alpha_1,\alpha_2$ be the interpolants of the premises, which are partitioned as $\Ga';\phi_2 \imp \phi_3,\Pi' \seq \phi_1\imp\phi_2$ and $\Ga';\phi_3,\Pi'\seq \De$. 
Thus, $\alpha_1\en\alpha_2$ satisfies the conditions of being an interpolant for the conclusion and we take $\gamma(p_1,p_2)$ to be $p_1\en p_2$, noting that $\alpha$ indeed belongs to the common language of the partitioned conclusion. 

Second, suppose the partition of $S$ is $\Ga',(\phi_1 \imp \phi_2) \imp \phi_3;\Pi \seq \De$. Then, we consider 
the partition  $\Pi';\phi_2 \imp \phi_3,\Ga'\seq \phi_1\imp\phi_2$ of the left premise with interpolant $\alpha_1$. For the right premise we consider the partition $\Ga',\phi_3;\Pi'\seq \De$ and its interpolant $\alpha_2$.  Thus, the following sequents are all derivable. 
\[\small 
 \begin{array}{llll}
 (1) & \Pi'\seq \alpha_1 & (2) & \alpha_1,\phi_2 \imp \phi_3,\Ga'\seq \phi_1\imp\phi_2 \\
 (3) & \Ga',\phi_3\seq \alpha_2 & (4) & \alpha_2,\Pi'\seq \De
 \end{array}
\]
From this, follows the derivability of the following sequents:
\[\small
 \begin{array}{lll}
 (5)  & \Ga',\alpha_1,(\phi_1 \imp \phi_2) \imp \phi_3 \seq \alpha_2     & \text{by }(2)\ \&\ (3)\\
 (6)  & \Ga',(\phi_1 \imp \phi_2) \imp \phi_3 \seq \alpha_1 \imp\alpha_2 & \text{by }(5) \\                                                  
 (7) & \Pi',\alpha_1\imp\alpha_2 \seq \De                                & \text{by }(1)\ \& \ (4)
  \end{array}
\]
Thus, 
$\alpha_1\imp\alpha_2 $ satisfies the conditions of being an interpolant for the conclusion and we leave it to the reader to verify that the formula is indeed in the common language of the partitioned $S$. We let $\gamma(p_1,p_2)$ be $p_1\imp p_2$.
\end{proof}

If one considers the proof of sequent interpolation for \Gdyc\ (Theorem~\ref{thm:G4ip}) then the cases $Lp\!\imp$ and $L\!\imp\imp$ seem specific and the others general. 
This brings us to define a large and general class of rules, the semi-analytic rules. 

\begin{definition}
 \label{def:semi-ana}
In the following definitions, expression $\lngl\lngl\Ga_i,\ov{\phi}_{ik}\seq\upchi_{ik}\rngl_{k=1}^{a_i}\rngl_{i=1}^m$ is short for the sets of premises 
\[
 \lngl\Ga_1,\ov{\phi}_{1k}\seq\upchi_{1k}\rngl_{k=1}^{a_1} \ \ \dots \ \ 
 \lngl\Ga_m,\ov{\phi}_{mk}\seq\upchi_{mk}\rngl_{k=1}^{a_m},
\]
where $\lngl\Ga_i,\ov{\phi}_{ik}\seq\upchi_{ik}\rngl_{k=1}^{a_i}$ is short for the set of premises 
\[
 \Ga_i, \ov{\phi}_{i1} \seq \upchi_{i1} \ \ 
 \Ga_i, \ov{\phi}_{i2} \seq \upchi_{i2} \ \ \dots  \ \ 
 \Ga_i, \ov{\phi}_{ia_i} \seq \upchi_{ia_i}. 
\]
Likewise for expressions $\lngl\lngl\Ga_i,\ov{\phi}_{ik}\seq\De_i\rngl_{k=1}^{a_i}\rngl_{i=1}^m$. By $\ov{\psi}$, we mean a multiset of formulas $\{\psi_1, \ldots, \psi_n\}$.

A sequent rule $\rsch$ is {\em left semi-analytic} if it is of the following form:
\begin{equation}
 \label{eq:leftsar}
 \AxiomC{$\lngl\lngl\Ga_i,\ov{\phi}_{ik}\seq\De_i\rngl_{k=1}^{a_i}\rngl_{i=1}^m$}
 \AxiomC{$\lngl\lngl\Pi_j,\ov{\psi}_{jl}\seq \upchi_{jl}\rngl_{l=1}^{b_j}\rngl_{j=1}^n$}
 \RightLabel{$\rsch$}
 \BinaryInfC{$\Ga_1,\dots,\Ga_m, \Pi_1,\dots, \Pi_n,\phi \seq \De_1, \dots, \De_m$}
 \DisplayProof
\end{equation}
A {\em right semi-analytic} rule is of the following form:
\begin{equation}
 \label{eq:rightsar}
 \AxiomC{$\lngl\lngl\Ga_i,\ov{\phi}_{ik}\seq \upchi_{ik}\rngl_{k=1}^{a_i}\rngl_{i=1}^m$}
 \RightLabel{$\rsch$}
 \UnaryInfC{$\Ga_1,\dots,\Ga_m \seq \phi$}
 \DisplayProof
\end{equation}
Here the $\Ga_i, \Pi_j,\De_i$ are multiset variables, and we have the following respective variable conditions for the left and the right rules:
\[
 \bigcup_{ik} V(\ov{\phi}_{ik}) \cup \bigcup_{jl} V(\ov{\psi}_{jl}) \cup \bigcup_{jl} V(\upchi_{jl}) 
  \subseteq V(\phi)
 \ , \ 
 \bigcup_{ik} V(\ov{\phi}_{ik}) \cup \bigcup_{ik} V(\upchi_{ik}) \subseteq V(\phi).
\]
Recall that we consider single-conclusion sequents, so that the $\De_1, \dots, \De_m$ are empty or consist of one formula, meaning that at most one of the $\De_i$ can be nonempty. A left semi-analytic rule as above is {\it context-sharing} if $m=n$ and $\Ga_i=\Pi_i$ for all $i$. 
A rule is {\it semi-analytic} if it is either right or left semi-analytic. 
\end{definition}

Note that the left and right conjunction and disjunction rules and the right implication rule of \Gthi\ (and thus of \Gdyc) are semi-analytic rules. The rules $L\!\imp$ of \Gthi\ and $L\!\imp\imp$ of \Gdyc\ are context-sharing semi-analytic rules. Regarding the other left implication rules of \Gdyc, the rule $Lp\!\imp$ is not semi-analytic, but the rules $L\en\!\imp$ and $L\of\!\imp$ are. 

In the following two lemmas we see how for semi-analytic rules, interpolants for the conclusion can be obtained from interpolants of the premises in a uniform way.

\begin{lemma}
 \label{lem:rightsemi}
For any intermediate logic $\lgc$, any right semi-analytic rule \eqref{eq:rightsar} and any partition $\Pi_1,\dots,\Pi_m; \Pi_1',\dots,\Pi_m'\seq \phi$ of its conclusion: if $\alpha_{ik}$ is an interpolant for premise $\Pi_i;\Pi_i',\ov{\phi}_{ik}\seq \upchi_{i}$, then $\alpha=\ben_{ik}\alpha_{ik}$ is an interpolant for the partitioned conclusion. 
\end{lemma}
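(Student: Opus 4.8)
The plan is to verify directly the three defining conditions of an interpolant for the partitioned conclusion $\Pi_1,\dots,\Pi_m;\Pi_1',\dots,\Pi_m'\seq\phi$, assembling each of them from the premise interpolants. First I would unpack the hypothesis: for each $i,k$ the formula $\alpha_{ik}$ being an interpolant of $\Pi_i;\Pi_i',\ov{\phi}_{ik}\seq\upchi_{ik}$ means that (a) $\Pi_i\seq\alpha_{ik}$ is derivable, (b) $\Pi_i',\ov{\phi}_{ik},\alpha_{ik}\seq\upchi_{ik}$ is derivable, and (c) every atom of $\alpha_{ik}$ occurs both in $\Pi_i$ and in $\Pi_i'\cup\ov{\phi}_{ik}\cup\{\upchi_{ik}\}$. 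The goal is then to establish the corresponding three facts for $\alpha=\ben_{ik}\alpha_{ik}$ against the conclusion's partition.

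For the left requirement I would start from (a): since $\Pi_i$ is a subcontext of $\Pi_1,\dots,\Pi_m$, weakening turns each $\Pi_i\seq\alpha_{ik}$ into $\Pi_1,\dots,\Pi_m\seq\alpha_{ik}$, and conjoining all of these with $R\en$ gives $\Pi_1,\dots,\Pi_m\seq\alpha$, i.e.\ $\af_\lgc\ben(\Pi_1,\dots,\Pi_m)\imp\alpha$.

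The right requirement is where the shape of a right semi-analytic rule does the real work, and it is the step I expect to be the main obstacle. From (b), together with the derivability of $\alpha\seq\alpha_{ik}$ (conjunction elimination), I would first replace $\alpha_{ik}$ by $\alpha$ in each premise to obtain $\Pi_i',\ov{\phi}_{ik},\alpha\seq\upchi_{ik}$. These are exactly the premises of the instance of $\rsch$ obtained by substituting the context variable $\Ga_i$ by $\Pi_i',\alpha$ while leaving the schematic formulas $\ov{\phi}_{ik},\upchi_{ik},\phi$ fixed, so a single application of $\rsch$ produces $\Pi_1',\alpha,\dots,\Pi_m',\alpha\seq\phi$. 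Contracting the $m$ copies of $\alpha$ down to one then yields $\Pi_1',\dots,\Pi_m',\alpha\seq\phi$, as required. The delicate point is that the \emph{same} $\alpha$ must be pushed uniformly into each of the $m$ context slots so that one rule application still reproduces $\phi$; this is legitimate precisely because a right semi-analytic rule carries its contexts additively into the conclusion and the duplicated $\alpha$'s are then absorbed by contraction. If one prefers not to rely on admissibility of contraction in an abstract calculus, the same conclusion follows from soundness of $\rsch$ for $\lgc$: the premises $\alpha\en\ben\Pi_i'\en\ben\ov{\phi}_{ik}\imp\upchi_{ik}$ are $\lgc$-valid, hence so is the conclusion under $\Ga_i:=\Pi_i',\alpha$, and idempotence of $\en$ collapses the repeated $\alpha$.

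Finally, for the common-language condition I would combine (c) with the rule's variable restriction. Each atom of $\alpha$ comes from some $\alpha_{ik}$, so by (c) it occurs in $\Pi_i$ and hence in the left part $\Pi_1,\dots,\Pi_m$; by (c) it also occurs in $\Pi_i'\cup\ov{\phi}_{ik}\cup\{\upchi_{ik}\}$, and since the variable condition $\bigcup_{ik}V(\ov{\phi}_{ik})\cup\bigcup_{ik}V(\upchi_{ik})\subseteq V(\phi)$ forces every atom of $\ov{\phi}_{ik}$ and $\upchi_{ik}$ into $\phi$, that atom lies in $\Pi_i'\cup V(\phi)$ and hence in the right part $\Pi_1',\dots,\Pi_m',\phi$. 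Thus $\alpha$ is in the common language of the partitioned conclusion, completing the three checks.
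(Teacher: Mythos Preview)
Your proof is correct and follows essentially the same route as the paper: derive $\Pi_1,\dots,\Pi_m\seq\alpha$ by weakening and $R\en$, push $\alpha$ uniformly into each context slot $\Ga_i:=\Pi_i',\alpha$ to fire $\rsch$ and then contract the $m$ copies of $\alpha$, and finish the variable condition via the semi-analytic restriction $V(\ov{\phi}_{ik})\cup V(\upchi_{ik})\subseteq V(\phi)$. Your write-up is in fact a bit more explicit than the paper's (you spell out the weakening and the cut on $\alpha\seq\alpha_{ik}$, and offer a semantic alternative to contraction), but the argument is the same.
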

\begin{proof}
By assumption, for all $\alpha_{ik}$ with $1\leq i\leq m$ and $1 \leq k \leq m_i$:
\[
 \af_{\lgc} \Pi_i \seq \alpha_{ik} \ \ \ \ \af_{\lgc} \alpha_{ik},\Pi_i',\ov{\phi}_{ik}\seq \upchi_{i}
\]
\[ 
 V(\alpha_{ik}) \subseteq V(\Pi_i) \cap \big(V(\Pi_i') \cup V(\ov{\phi}_{ik}) \cup V(\upchi_{i})  \big).
\]
This implies the derivability of $\Pi_1,\dots,\Pi_m \seq \alpha$ and of $\alpha,\Pi_i',\ov{\phi}_{ik}\seq \upchi_{i}$. The form of the semi-analytic rule $\rsch$ implies that the following is an instance of it:
\[
 \AxiomC{$\lngl\lngl\alpha,\Pi_i',\ov{\phi}_{ik}\seq \upchi_{i}\rngl_{k=1}^{m_i}\rngl_{i=1}^m$}
 \RightLabel{$\rsch$}
 \UnaryInfC{$\alpha^m,\Pi_1',\dots,\Pi_m' \seq \phi$}
 \DisplayProof
\]
Hence, $\af_{\lgc}\alpha,\Pi_1',\dots,\Pi_m' \seq \phi$. 
As $\rsch$ is semi-analytic, $V(\ov{\phi}_{ik}) \cup V(\upchi_{i}) \subseteq V(\phi)$. Thus, the variable conditions on $\alpha_{ik}$ above imply that 
\[
 V(\alpha)\subseteq \bigcup_I V(\Pi_i) \cap \big(V(\Pi_i') \cup V(\phi) \big).
\]
Hence, $\alpha$ is the desired interpolant. 
\end{proof}

\begin{lemma}
 \label{lem:leftsemi}
For any intermediate logic $\lgc$, any left semi-analytic rule \eqref{eq:leftsar}, and all partitions $\Ga_i';\Ga_i''$ of $\Ga_i$ and $\Pi_i';\Pi_i''$ of $\Pi_i$:  
\begin{enumerate}
\item For partition $\ov{\Ga}_i',\ov{\Pi}_i'; \phi, \ov{\Ga}_i'',\ov{\Pi}_i''\seq \De_1, \dots, \De_m$, if $\alpha_{ik}$ and $\beta_{jl}$ are interpolants of the partitioned premises $\Ga_i';\ov{\phi}_{ik},\Ga_i'' \seq\De_i$ and $\Pi_j';\ov{\psi}_{jl},\Pi_j'' \seq \upchi_{j}$, respectively, then $\ben_{ik}\alpha_{ik} \en \ben_{jl}\beta_{jl}$ is an interpolant for the partitioned conclusion. 
\item 
For partition $\ov{\Ga}_i',\ov{\Pi}_i',\phi;\ov{\Ga}_i'',\ov{\Pi}_i''\seq \De_1, \dots, \De_m$, as it is single-conclusion, w.l.o.g. suppose for $i \neq 1$, $\Delta_i =\emptyset$.
If $\alpha_{ik}$ and $\beta_{jl}$ are interpolants of the partitioned premises $\Ga_i',\ov{\phi}_{ik};\Ga_i'' \seq\De_i$ and $\Pi_j',\ov{\psi}_{jl};\Pi_j'' \seq \upchi_{j}$, respectively, then $(\ben_{ik, i \geq 2}\alpha_{ik} \en \ben_{jl}\beta_{jl}) \to \bigvee_{k}\alpha_{1k}$ is an interpolant for the partitioned conclusion.
\end{enumerate}
\end{lemma}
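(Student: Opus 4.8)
The plan is, for each of the two partitions, to check the three conditions that make $\alpha$ a sequent interpolant of the partitioned conclusion $\Sigma;\Theta\seq\Delta_1$: derivability of $\Sigma\seq\alpha$ and of $\alpha,\Theta\seq\Delta_1$ in $\lgc$, and the variable condition $V(\alpha)\subseteq V(\Sigma)\cap(V(\Theta)\cup V(\Delta_1))$. I write $\Delta_1$ for the single (possibly empty) succedent formula, using the single-conclusion hypothesis together with the w.l.o.g.\ assumption $\De_i=\varnothing$ for $i\neq 1$, and I abbreviate the two context-components of the conclusion by $\mathcal L=\ov\Ga_i',\ov\Pi_i'$ and $\mathcal R=\ov\Ga_i'',\ov\Pi_i''$. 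Throughout I use that $\lgc$ is an intermediate logic, so weakening, cut and the derived $\af_\lgc$-reasoning for $\en,\of,\imp$ are available, and that $\rsch$ is sound for $\lgc$, so that applying $\rsch$ to $\af_\lgc$-derivable premises yields an $\af_\lgc$-derivable conclusion. Every variable condition will be discharged from the variable conditions on the premise interpolants together with the rule condition $\bigcup_{ik}V(\ov\phi_{ik})\cup\bigcup_{jl}V(\ov\psi_{jl})\cup\bigcup_{jl}V(\upchi_{jl})\subseteq V(\phi)$.

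For item (1), where $\phi$ lies in the right-hand component, the argument is the exact mirror of Lemma~\ref{lem:rightsemi}, so I would proceed as there. With $\alpha=\ben_{ik}\alpha_{ik}\en\ben_{jl}\beta_{jl}$, the derivation of $\mathcal L\seq\alpha$ is obtained from the left halves $\Ga_i'\seq\alpha_{ik}$ and $\Pi_j'\seq\beta_{jl}$ by weakening and repeated $R\en$. For the other half I feed the right halves $\alpha_{ik},\ov\phi_{ik},\Ga_i''\seq\De_i$ and $\beta_{jl},\ov\psi_{jl},\Pi_j''\seq\upchi_{jl}$ (available since $\alpha\af_\lgc\alpha_{ik}$ and $\alpha\af_\lgc\beta_{jl}$) into $\rsch$ with contexts instantiated as $\Ga_i\mapsto\alpha,\Ga_i''$ and $\Pi_j\mapsto\alpha,\Pi_j''$; the resulting conclusion is $\alpha,\ov\Ga_i'',\ov\Pi_i'',\phi\seq\Delta_1$ once the repeated copies of $\alpha$ are contracted. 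The variable condition is immediate because $\phi$, and hence all of $\ov\phi_{ik},\ov\psi_{jl},\upchi_{jl}$, lies in the right-hand component.

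Item (2), where $\phi$ lies in the left-hand component and $\alpha=A\imp B$ with $A=\ben_{ik,\,i\geq2}\alpha_{ik}\en\ben_{jl}\beta_{jl}$ and $B=\bof_k\alpha_{1k}$, is the substantial case, and I would model it on the $L\!\imp\imp$ step in the proof of Theorem~\ref{thm:G4ip}. The idea is that $B$ gathers exactly the premise-interpolants that are derivable on the $\mathcal L,\phi$-side and that, with the $\mathcal R$-side, yield $\Delta_1$ (namely $\alpha_{1k}$, via $\Ga_1',\ov\phi_{1k}\seq\alpha_{1k}$ and $\alpha_{1k},\Ga_1''\seq\Delta_1$), while $A$ gathers the interpolants that are supplied by the $\mathcal R$-side. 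For the left condition $\mathcal L,\phi\seq A\imp B$ I would apply $\rsch$ with $A$ adjoined to every context and with the succedent of the principal block set to $B$: the $i=1$ premises become $\ldots\seq B$ by $R\of$ from $\Ga_1',\ov\phi_{1k}\seq\alpha_{1k}$, and the remaining premises (the right group and the $i\geq2$ blocks) are supplied by the conjuncts of $A$ together with the relevant halves of the corresponding premise interpolants; contracting and then applying $R\!\imp$ yields $\mathcal L,\phi\seq A\imp B$. For the right condition $A\imp B,\mathcal R\seq\Delta_1$ I would first derive $\mathcal R\seq A$ conjunct by conjunct, then use $L\!\imp$ on $A\imp B$ to obtain $B$ in the antecedent, and finally discharge $B=\bof_k\alpha_{1k}$ by $L\of$ using $\alpha_{1k},\Ga_1''\seq\Delta_1$ and weakening.

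The main obstacle, and the place where real care is needed, is the bookkeeping in item (2): one must decide, for each premise, which half of its interpolant is used on the $\mathcal L,\phi$-side and which on the $\mathcal R$-side, so that both the single application of $\rsch$ really closes up to the intended intermediate sequent after contraction \emph{and} $\mathcal R\seq A$ holds. This is precisely where the empty-succedent blocks $\De_i=\varnothing$ ($i\neq1$) and the single-conclusion hypothesis are indispensable, and where the asymmetry between the disjunctive contribution $B$ of the $i=1$ block and the conjunctive contribution $A$ of the other blocks forces the implicational shape of the interpolant. The most delicate verification is then the variable condition $V(A\imp B)\subseteq(V(\mathcal L)\cup V(\phi))\cap(V(\mathcal R)\cup V(\Delta_1))$, which I would check separately for each $\alpha_{ik}$ and $\beta_{jl}$ by combining the fact that each already lies in the common language of its own partition with $V(\ov\phi_{ik})\cup V(\ov\psi_{jl})\cup V(\upchi_{jl})\subseteq V(\phi)$; it is here that the precise choice of premise partitions must be pinned down.
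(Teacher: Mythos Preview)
Your treatment of part~1 is exactly the paper's argument: derive $\mathcal L\seq\gamma$ from the left halves, then feed the right halves into an instance of $\rsch$ with $\gamma$ adjoined to every context and contract. The paper disposes of part~2 with the single sentence ``The proof of 2.\ is analogous,'' so your sketch there is already more detailed than the source; modelling it on the $L\!\imp\imp$ case of Theorem~\ref{thm:G4ip} and separating the $i=1$ block (contributing the disjunction $B$) from the rest (contributing the conjunction $A$) is the right idea.

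One caution for when you actually carry out part~2. With the premise partitions exactly as written in the statement, the halves you obtain are $\af_\lgc\Pi_j',\ov\psi_{jl}\seq\beta_{jl}$ and $\af_\lgc\beta_{jl},\Pi_j''\seq\upchi_{jl}$, and similarly for the $i\geq 2$ blocks. Your plan for the left condition instantiates $\rsch$ with contexts $A,\Ga_i'$ and $A,\Pi_j'$, so the $\Pi$-premises you need are $A,\Pi_j',\ov\psi_{jl}\seq\upchi_{jl}$, but the half of $\beta_{jl}$ available to you has $\Pi_j''$, not $\Pi_j'$, in the antecedent. The fix is the one you already anticipate in your last paragraph: for the $\Pi$-premises and the $i\geq 2$ blocks one must take the \emph{swapped} partition (with $\Pi_j''$, resp.\ $\Ga_i''$, on the left), exactly as the paper does for the left premise in the $L\!\imp\imp$ case. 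Once that choice is fixed, both derivability conditions and the variable condition go through as you describe.
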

\begin{proof}
1. Let $\gamma$ denote $\ben_{ik}\alpha_{ik} \en \ben_{jl}\beta_{jl}$. By assumption we have
\[
 \af_{\lgc} \Ga_i' \seq \alpha_{ik} \ , \ 
 \af_{\lgc} \alpha_{ik},\ov{\phi}_{ik},\Ga_i'' \seq\De_i \ , \ 
 \af_{\lgc} \Pi_j' \seq \beta_{jl} \ , \ 
 \af_{\lgc}\beta_{jl},\ov{\psi}_{jl},\Pi_j'' \seq \upchi_{j}. 
\]
Hence, $\af_{\lgc} \ov{\Ga}_i',\ov{\Pi}_i' \seq \gamma$. The form of the semi-analytic rule $\rsch$ implies that the following is an instance of it:
\[
 \AxiomC{$\lngl\lngl\gamma,\Ga_i'',\ov{\phi}_{ik}\seq\De_i\rngl_{k=1}^{m_i}\rngl_{i=1}^m$}
 \AxiomC{$\lngl\lngl\gamma,\Pi_j'',\ov{\psi}_{jl}\seq \upchi_{j}\rngl_{l=1}^{n_j}\rngl_{j=1}^n$}
 \RightLabel{$\rsch$}
 \BinaryInfC{$\gamma^x,\Ga_1'',\dots,\Ga_m'', \Pi_1'',\dots, \Pi_n'',\phi \seq \De_1, \dots, \De_m$}
 \DisplayProof
\]
We can conclude that $\gamma$ is the desired interpolant, as the condition on its atoms follows from that of its conjuncts. 

The proof of 2.\ is analogous.
\end{proof}

Similar to the semi-analytic rules we define a class of axioms that includes the axioms of \Gthi\ or \Gdyc\ and such that every partition of a sequent in the class has sequent interpolation. 

\begin{definition}
A sequent is a {\it focused} axiom if it has one of the following forms, where $|
\De|\leq 1$:
\[
 \phi\seq\phi \ \  \ \ \seq \phi \ \  \ \ \phi_1,\dots,\phi_n \seq \ \  \ \ \Ga,\phi_1,\dots,\phi_n \seq \De \ \  \ \ \Ga\seq\phi.
\]
where $V(\phi_i)=V(\phi_j)$ for every $1 \leq i.j \leq n$.
In a classical setting the definition of focused axiom is similar, but there the requirement of $\De$ is lifted.
\end{definition}

\begin{lemma}
 \label{lem:focusax}
Every partition of every focused axiom has sequent interpolation.
\end{lemma}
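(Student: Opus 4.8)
The plan is to extend the proof of Lemma~\ref{lem:axint} from the concrete axioms of \Gthi\ to the five abstract shapes of a focused axiom. I fix a focused axiom $S$ together with a partition $\Ga;\Pi\seq\De$ of it, and for each shape I exhibit an explicit interpolant $\alpha$ and verify the three requirements: (i) $\Ga\seq\alpha$ is derivable, (ii) $\Pi,\alpha\seq\De$ is derivable, and (iii) $V(\alpha)\subseteq V(\Ga)\cap\big(V(\Pi)\cup V(\De)\big)$. Because $S$ is itself derivable and weakening is depth-preserving admissible (Lemma~\ref{lem:structuralrules}), the two halves (i) and (ii) reduce in every case to $S$ together with weakening and the conjunction rules $L\en$, $R\en$; the real content of the lemma is the common-language condition (iii).

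The guiding heuristic for choosing $\alpha$ is: take $\alpha=\top$ when the left block $\Ga$ carries no essential content (so that $(\Pi\seq\De)$ is already $S$ up to weakening), take $\alpha=\bot$ when $\Ga$ is by itself contradictory, and otherwise let $\alpha$ be the conjunction of the relevant formulas sitting in $\Ga$. Since $V(\top)=V(\bot)=\varnothing$, condition (iii) is automatic whenever $\alpha$ is a constant. The three shapes $\phi\seq\phi$, $\seq\phi$ and $\Ga\seq\phi$ are immediate: for $\phi\seq\phi$ put $\alpha=\phi$ if $\phi\in\Ga$ and $\alpha=\top$ if $\phi\in\Pi$; for $\seq\phi$ and $\Ga\seq\phi$ put $\alpha=\top$, so that $\Pi,\top\seq\De$ is the weakened axiom.

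The interesting shapes are $\phi_1,\dots,\phi_n\seq$ and $\Ga,\phi_1,\dots,\phi_n\seq\De$. Here I set $A=\{i:\phi_i\in\Ga\}$ and $B=\{i:\phi_i\in\Pi\}$ and take $\alpha=\ben_{i\in A}\phi_i$ (the empty conjunction being $\top$). For $\phi_1,\dots,\phi_n\seq$ this works as soon as $B\neq\varnothing$; when $B=\varnothing$ all the $\phi_i$ lie in $\Ga$, which is then contradictory, and I switch to $\alpha=\bot$. For $\Ga,\phi_1,\dots,\phi_n\seq\De$ one first inspects why the sequent is an axiom: either the $\phi_i$ are jointly contradictory (the $L\bot$-type, treated exactly as above, with $\alpha=\bot$ in the degenerate case $B=\varnothing$), or the succedent is a formula $\psi$ already present in the antecedent (the identity-type). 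In the latter case, if $\psi$ is one of the $\phi_i$ then $V(\De)$ covers the variables of the $\phi_i$ and the conjunction $\ben_{i\in A}\phi_i$ still meets (iii); if $\psi$ lies in the context, take $\alpha=\psi$ when $\psi\in\Ga$ and $\alpha=\top$ when $\psi\in\Pi$.

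The step I expect to be the main obstacle is precisely the common-language condition (iii) in the split case $A\neq\varnothing\neq B$, where $\alpha=\ben_{i\in A}\phi_i$ and hence $V(\alpha)=\bigcup_{i\in A}V(\phi_i)$. One must see that these variables also appear on the right of the partition, and this is exactly what the hypothesis $V(\phi_i)=V(\phi_j)$ delivers: choosing any $j\in B$ gives $V(\alpha)=V(\phi_i)=V(\phi_j)\subseteq V(\Pi)$, while $V(\alpha)\subseteq V(\Ga)$ because $A\neq\varnothing$, so $V(\alpha)\subseteq V(\Ga)\cap V(\Pi)$. Without the equal-variable assumption a left formula could carry a variable absent from the right block and (iii) would fail, which is why that hypothesis is built into the definition of focused axiom. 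Finally, the classical variant (where the constraint on $\De$ is dropped and the succedent is itself partitioned) is handled symmetrically, reading $\bof$ for the succedent and dualising the roles of $\top$ and $\bot$.
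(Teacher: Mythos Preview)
Your case analysis is correct and is exactly the approach the paper takes (its own proof is the single line ``Straightforward checking of all cases''). One clarification worth making: in the fourth shape $\Ga,\phi_1,\dots,\phi_n\seq\De$ the symbols $\Ga$ and $\De$ are multiset \emph{variables}, so every sequent whose antecedent contains $\phi_1,\dots,\phi_n$ is an instance of the schema; there is no further question of ``why the sequent is an axiom''. Consequently your ``$L\bot$-type'' argument already covers every partition of every instance of that shape --- when $B\neq\varnothing$ the right half $\Pi,\ben_{i\in A}\phi_i\seq\De$ reduces via $L\en$ to an instance of the very same schema, and when $B=\varnothing$ one instantiates the schema with succedent $\bot$ to get $\Ga\seq\bot$ --- and the ``identity-type'' sub-case is superfluous. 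Your identification of the equal-variable hypothesis $V(\phi_i)=V(\phi_j)$ as the crux of the common-language condition in the split case is exactly right.
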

\begin{proof}
Straightforward checking of all cases. 
\end{proof}

\subsubsection{Positive and Negative Results via Interpolation}
Having defined all the relevant notions we are now ready to present the first negative result. Regarding the general method described in Subsection~\ref{sec:method}: here $\CL=\SCL$ is the class of intermediate logics, $\PL$ is Craig interpolation, and $\SC$ is the class of (context-sharing) semi-analytic rules and focused axioms. And Theorem~\ref{thm:dycinter} and its Corollary~\ref{cor:dycinter} form part (I) of the method, Theorem~\ref{thm:maksimova} is part (II), and Corollary~\ref{cor:negresconcrete} is part (III), the conclusion, the negative result we are aiming for. The following theorem is obtained in \citep{jalali&tabatabai18a} and is an extension of a similar result in \citep{iemhoff17b}. The latter considers extensions of \DY\ rather than \Gthi and a set of rules that is a proper subset of the semi-analytic rules.

\begin{theorem} 
 \label{thm:dycinter} \citep{iemhoff17b,jalali&tabatabai18a}
Any sequent calculus $\G$ that is an extension of $\Gthi$ by (context-sharing) semi-analytic rules and focused axioms has sequent interpolation. 
\end{theorem}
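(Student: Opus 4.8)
The plan is to prove the statement by induction on the depth of the derivation of the partitioned sequent $S = (\Ga; \Pi \seq \De)$ in $\G$, following exactly the template of the proof of Theorem~\ref{thm:dycint} but feeding the generic combination lemmas for semi-analytic rules into the inductive step. The two ingredients I would assemble are a base case discharged entirely by the focused-axiom lemma, and an inductive step discharged entirely by the two semi-analytic combination lemmas. Throughout, ``interpolant'' and ``derivable'' are read as referring to $\G$-derivability.

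For the base case, the key observation is that every axiom of $\G$ is a focused axiom. The axioms inherited from $\Gthi$ already fit the focused schema: the identity axiom $p \seq p$ is an instance of $\phi \seq \phi$, and the $L\bot$ axiom $\Ga, \bot \seq \De$ is an instance of $\Ga, \phi_1 \seq \De$; and by hypothesis every axiom added to $\Gthi$ is focused as well. Hence Lemma~\ref{lem:focusax} shows that every partition of every axiom instance has sequent interpolation, settling the leaves of the derivation tree.

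For the inductive step, the crucial point is that every rule of $\G$ is semi-analytic: the rules inherited from $\Gthi$ are semi-analytic (the conjunction and disjunction rules and $R\!\imp$ directly, and $L\!\imp$ as a context-sharing instance), while the added rules are semi-analytic by hypothesis. So suppose the last inference is an instance of a semi-analytic rule $\rsch$ with conclusion $S$, and fix a partition $\Ga; \Pi \seq \De$ of $S$. If $\rsch$ is right semi-analytic, the partition of the conclusion induces partitions of the premises — each context variable $\Ga_i$ splits into the part landing in $\Ga$ and the part landing in $\Pi$ — and Lemma~\ref{lem:rightsemi} supplies the conjunction $\ben_{ik}\alpha_{ik}$ of the premise interpolants (which exist by the induction hypothesis, the premises having strictly smaller depth) as an interpolant for the conclusion. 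If $\rsch$ is left semi-analytic, the same induced partitioning applies, but I must further distinguish according to which cell of the partition the principal formula $\phi$ falls into: this is precisely the dichotomy of Lemma~\ref{lem:leftsemi}, which yields the interpolant $\ben_{ik}\alpha_{ik} \en \ben_{jl}\beta_{jl}$ when $\phi$ lies on the right and $(\ben_{ik,\, i\geq 2}\alpha_{ik} \en \ben_{jl}\beta_{jl}) \to \bigvee_k \alpha_{1k}$ when $\phi$ lies on the left.

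Since the real algebraic work has already been discharged inside the combination lemmas, the induction itself is routine, and I expect the main (modest) obstacle to be the bookkeeping of the inductive step rather than any single computation. Concretely, one must verify that a partition of the conclusion canonically induces partitions of all premises — the context variables $\Ga_i, \Pi_j$ split consistently and the auxiliary formulas $\ov{\phi}_{ik}, \ov{\psi}_{jl}$ follow the principal formula $\phi$ — and correctly read off which clause of Lemma~\ref{lem:leftsemi} applies from the position of $\phi$. A second point deserving a word of care is that Lemmas~\ref{lem:rightsemi} and~\ref{lem:leftsemi} are phrased in terms of derivability in a logic $\lgc$; they transfer verbatim to $\G$-derivability because $\G$ contains the semi-analytic rule instance in question together with all of $\Gthi$, which provides the conjunction introduction and the admissible weakening used to assemble the combined interpolant sequents. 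This interaction between the semi-analytic shape of the rule and the single-conclusion restriction is where the argument is least mechanical.
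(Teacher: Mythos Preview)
Your proposal is correct and follows essentially the same inductive template as the paper's proof: induction on derivation depth, with Lemma~\ref{lem:focusax} handling the axioms and Lemmas~\ref{lem:rightsemi} and~\ref{lem:leftsemi} handling the rules. The one difference is organizational rather than substantive: the paper invokes Lemma~\ref{lem:dycint} for the rules inherited from \Gthi\ and reserves Lemmas~\ref{lem:rightsemi} and~\ref{lem:leftsemi} for the \emph{added} semi-analytic rules, whereas you observe that all \Gthi\ rules are already (context-sharing) semi-analytic and treat everything uniformly through the two combination lemmas. Your streamlining is legitimate---the paper itself notes just before Theorem~\ref{thm:dycinter} that the \Gthi\ rules are semi-analytic---and your explicit remark about transferring the lemmas from $\af_\lgc$ to $\af_\G$ (via the \Gthi\ rules and admissible structural rules present in $\G$) is a point the paper leaves implicit. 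One small inaccuracy: the \Gthi\ identity axiom is $\Ga, p \seq p$ with context, not bare $p \seq p$; it still falls under the focused-axiom schema, but your matching should be adjusted accordingly.
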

\begin{proof}
For $\G$ as in the theorem we have to show that for every partitioned sequent $\Ga;\Pi\seq \De$ derivable in \G\ there is a formula $\alpha$ in the common language of $(\Ga\seq \ )$ and $(\Pi\seq \De)$ such that $\Ga \seq \alpha$ and $\Pi,\alpha\seq\De$ are derivable in $\G$. We use induction on the depth of the derivation of $S =(\Ga;\Pi\seq \De)$. 

The case that $S$ is an instance of an axiom is covered by Lemma~\ref{lem:focusax}. If $S$ is not an axiom and the conclusion of an application of a rule $\rsch$, then Lemma~\ref{lem:dycint} applies in case $\rsch$ belongs to \Gthi\ and Lemmas~\ref{lem:rightsemi} and \ref{lem:leftsemi} in case $\rsch$ is a right or left semi-analytic rule not in \Gthi. 
\end{proof}

\begin{corollary}
 \label{cor:dycinter}
Any intermediate logic $\lgc$ that has a calculus \G\ only consisting of focused axioms and (context-sharing) semi-analytic rules has interpolation. 
\end{corollary}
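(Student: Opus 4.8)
The plan is to first strengthen the hypothesis on $\G$ to the statement that $\G$ has \emph{sequent} interpolation, and then to read off Craig interpolation for $\lgc$ by choosing an appropriate partition. For the first step I would run exactly the induction on derivation depth used in the proof of Theorem~\ref{thm:dycinter}, applied to a partitioned sequent $\Ga;\Pi\seq\De$ derivable in $\G$. The base case, where the sequent is an instance of an axiom, is covered by Lemma~\ref{lem:focusax}, since every axiom of $\G$ is by assumption focused. In the inductive step the last rule is, by assumption, a (context-sharing) semi-analytic rule, hence right or left semi-analytic, and Lemmas~\ref{lem:rightsemi} and \ref{lem:leftsemi} manufacture an interpolant for the partitioned conclusion out of the interpolants given by the induction hypothesis for the premises. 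Unlike in Theorem~\ref{thm:dycinter}, no separate appeal to Lemma~\ref{lem:dycint} for the $\Gthi$-rules is needed: here $\G$ consists \emph{only} of focused axioms and semi-analytic rules, so every case is already subsumed by the semi-analytic lemmas. Thus $\G$ has sequent interpolation.

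For the second step, suppose $\af_\lgc(\phi\imp\psi)$. Since $\G$ is a calculus for $\lgc$, the defining equivalence $\af_\G(\Ga\seq\De)$ iff $\af_\lgc(\ben\Ga\imp\bof\De)$ gives $\af_\G(\phi\seq\psi)$. I would then apply sequent interpolation to the partition $\phi\,;\ \seq\psi$ (that is, $\Ga=\{\phi\}$, $\Pi=\varnothing$, $\De=\{\psi\}$), obtaining a formula $\alpha$ with $\af_\G(\phi\seq\alpha)$ and $\af_\G(\alpha\seq\psi)$; translating back through the same equivalence yields $\af_\lgc(\phi\imp\alpha)$ and $\af_\lgc(\alpha\imp\psi)$. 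The language condition falls out of the bookkeeping: the common language of $(\phi\seq\ )$ and $(\ \seq\psi)$ is exactly the set of atoms occurring both in $\phi$ and in $\psi$, which is precisely the common-language requirement in the definition of Craig interpolation. Hence $\alpha$ is the desired interpolant and $\lgc$ has interpolation.

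I do not expect a genuine obstacle here, as all the real work is already done in the semi-analytic lemmas; the corollary is essentially their packaging together with the sequent-to-formula translation. The one point demanding care is to confirm that, for the particular partition chosen, the sequent-interpolation notion of common language coincides with the Craig common language, and to note that we are free to invoke the sequent-interpolation argument even though $\G$ need not literally be an extension of $\Gthi$ --- the proof of Theorem~\ref{thm:dycinter} uses its base calculus only through the fact that its rules are semi-analytic and its axioms focused.
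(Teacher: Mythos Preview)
Your proposal is correct but takes a different route from the paper. The paper does not rerun the induction of Theorem~\ref{thm:dycinter} on $\G$; instead it first enlarges $\G$ to $\G^* = \G \cup \Gthi$, observes that since $\lgc$ is intermediate (hence contains $\IPC$) the calculus $\G^*$ is still a calculus for $\lgc$, and then applies Theorem~\ref{thm:dycinter} verbatim to $\G^*$, which now literally is an extension of $\Gthi$ by focused axioms and semi-analytic rules. This is a one-line reduction that avoids re-opening the inductive argument.

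Your direct induction on $\G$ also works, and your observation that the $\Gthi$-specific Lemma~\ref{lem:dycint} is dispensable is right (all $\Gthi$ rules are themselves (context-sharing) semi-analytic, so Lemmas~\ref{lem:rightsemi} and~\ref{lem:leftsemi} already cover them). The one point that deserves an extra sentence in your write-up is the passage from ``interpolants derivable in $\lgc$''---which is what Lemmas~\ref{lem:rightsemi} and~\ref{lem:leftsemi} literally deliver, since their proofs use conjunction introduction, contraction, and an application of $\rsch$---to ``interpolants derivable in $\G$''. You implicitly rely on the defining equivalence $\af_\G S \Leftrightarrow \af_\lgc I(S)$ to close that gap; make it explicit. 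The paper's move of adjoining $\Gthi$ sidesteps this bookkeeping entirely, since $\G^*$ then contains the needed propositional rules natively. Either way the second step, reading off Craig interpolation from the partition $\phi;\varnothing\seq\psi$, is exactly as you describe.
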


\begin{proof}
First, extend the calculus \G\ by adding the axioms and rules of \Gthi\ and call it \G$^*$. Then, by the definition of a calculus for a logic \lgc\ (see Subsection \ref{sec:seqcaldef}) it is easy to see that since \lgc\ is an intermediate logic, \G$^*$\ is a calculus for \lgc. Now, we can use Theorem \ref{thm:dycinter} to get the result.
\end{proof}

\begin{corollary}
 \label{cor:negresinter}
If an intermediate logic $\lgc$ does not have interpolation, then it cannot have a sequent calculus only consisting of focused axioms and (context-sharing) semi-analytic rules.
\end{corollary}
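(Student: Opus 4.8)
The plan is to recognize that this corollary is nothing but the contrapositive of Corollary~\ref{cor:dycinter}, so the entire argument collapses to a single logical step once that corollary is in hand. Rather than attempting to argue directly that the \emph{absence} of interpolation rules out such a calculus, I would argue the equivalent implication that the \emph{existence} of such a calculus forces interpolation, which is exactly the content already established.

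First, suppose toward the contrapositive that the intermediate logic $\lgc$ \emph{does} possess a sequent calculus \G\ consisting solely of focused axioms and (context-sharing) semi-analytic rules. Then Corollary~\ref{cor:dycinter} applies verbatim to \G\ and yields that $\lgc$ has the Craig interpolation property. Taking the contrapositive of this implication gives precisely the desired statement: if $\lgc$ lacks interpolation, then no sequent calculus for $\lgc$ of the prescribed shape can exist.

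Since the real work—that a calculus built only from focused axioms and (context-sharing) semi-analytic rules transfers sequent interpolation, and hence Craig interpolation, to its logic—has already been carried out in Theorem~\ref{thm:dycinter} and its Corollary~\ref{cor:dycinter}, there is no genuine obstacle here beyond stating the contraposition explicitly. This corollary is simply the instantiation of the general schema (I)--(III) of Subsection~\ref{sec:method} for the present setting: Corollary~\ref{cor:dycinter} supplies part (I), and the present statement records the contrapositive form of the conclusion for an arbitrary non-interpolating intermediate logic. Its role is to prepare the ground for the concrete negative results, which follow once it is combined with an external part (II)—a classification of which intermediate logics fail to have interpolation.
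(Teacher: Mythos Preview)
Your proposal is correct and matches the paper's approach exactly: the paper states this corollary without proof, treating it as the immediate contrapositive of Corollary~\ref{cor:dycinter}. Your explicit unpacking of the contraposition and its placement in the (I)--(III) schema is accurate and, if anything, more detailed than what the paper provides.
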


\begin{theorem} \citep{maksimova77}
 \label{thm:maksimova}
There are exactly seven intermediate logics with Craig interpolation:
\[
 \IPC, {\sf Sm}, {\sf GSc}, {\sf LC}, {\sf KC}, {\sf Bd_2}, \CPC.
\]
\end{theorem}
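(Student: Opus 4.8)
The plan is to route the entire problem through algebraic semantics, since the statement is really a theorem about the whole lattice of intermediate logics rather than about any single proof system. First I would invoke the dual lattice isomorphism between intermediate logics and varieties of Heyting algebras: to each intermediate logic $\lgc$ associate its variety $V_\lgc$ of Lindenbaum--Tarski (equivalently, $\lgc$-validating Heyting) algebras, so that $\lgc \subseteq \lgc'$ iff $V_{\lgc'} \subseteq V_\lgc$. The seven logics in the statement then correspond to seven specific, well-understood varieties --- generated by small chains or small frames, e.g.\ $\CPC$ by the two-element algebra, ${\sf LC}$ by the chains, ${\sf KC}$ by directed frames, ${\sf Bd_2}$ by depth-two frames --- and the task becomes to classify \emph{all} varieties of Heyting algebras enjoying a certain algebraic property.

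The key bridge theorem I would establish is that an intermediate logic $\lgc$ has the Craig interpolation property exactly when $V_\lgc$ has the \emph{superamalgamation property}: every span $A \hookleftarrow C \hookrightarrow B$ of embeddings in $V_\lgc$ completes to a commuting square into some $D \in V_\lgc$ in such a way that no order relations between the images of $A$ and $B$ hold in $D$ beyond those already mediated by $C$. The translation is standard: read off the diagram from the finitely generated free algebras, let $C$ carry the common language of $\phi$ and $\psi$, and observe that the existence of an interpolant $\alpha$ is precisely the tightness clause of superamalgamation. This direction is conceptually routine once the deduction theorem and the behaviour of filters and congruences in Heyting algebras are in place.

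With the bridge in hand the proof splits. For the positive half I would verify that each of the seven candidate varieties superamalgamates; for those logics admitting a terminating calculus built from focused axioms and (context-sharing) semi-analytic rules this is already delivered by Corollary~\ref{cor:dycinter}, and for the remaining ones it is a direct algebraic check on the relevant finite generating algebras. The genuinely hard half is the converse: that \emph{no other} intermediate logic has CIP, i.e.\ superamalgamation fails outside these seven varieties. Here I would analyse the subdirectly irreducible members of an arbitrary amalgamable variety through their dual Esakia spaces, showing that superamalgamation forces severe structural constraints (bounds on the depth and width of the frames, and closure under gluing of rooted frames), and that these constraints are satisfied by only finitely many --- in fact exactly seven --- varieties.

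The main obstacle is precisely this classification step. The lattice of intermediate logics is uncountable, so one cannot argue candidate by candidate; the argument must instead extract, from the abstract superamalgamation property, enough rigidity in the dual frames to collapse the search space to a finite list, and then match that list against the seven named logics. Controlling amalgamation uniformly across all subdirectly irreducibles --- rather than checking it diagram by diagram --- is where essentially all the difficulty of Maksimova's theorem resides, which is why the excerpt imports the result from \citep{maksimova77} rather than proving it in place.
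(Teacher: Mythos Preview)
The paper does not prove this theorem; it is imported wholesale from \citep{maksimova77} as external input to the method (step~(II) in Subsection~\ref{sec:method}), exactly as you yourself acknowledge in your final sentence. So there is no ``paper's own proof'' to compare against, and your outline is a reasonable high-level sketch of Maksimova's original algebraic argument.

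One small correction worth flagging: Maksimova's bridge theorem for intermediate logics equates Craig interpolation with the \emph{amalgamation property} of the corresponding variety of Heyting algebras, not superamalgamation. For varieties of Heyting algebras the two happen to coincide (the deduction theorem collapses the distinction between local and global consequence, hence between AP and SAP), so nothing in your sketch actually breaks, but if you trace the 1977 paper the reduction runs through AP. Also, invoking Corollary~\ref{cor:dycinter} for the positive half mixes the paper's proof-theoretic machinery into what is, in Maksimova's treatment, a purely algebraic verification; that hybrid is logically fine but is your addition, not hers, and for most of the seven logics it is not established in the present paper that they possess calculi of the required form.
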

The frame conditions of the logics with Craig interpolation:  
\begin{center}
 \begin{tabular}{ll}
 logic        & class of frames \\
 \\
 {\sf Sm}     & frames of at most two nodes \\
 {\sf GSc}    & the fork \\
 {\sf LC}     & linear frames \\
 {\sf KC}     & frames with one maximal node \\
 ${\sf Bd_2}$ & frames of depth at most 2.
 \end{tabular}
\end{center}

\begin{corollary}
 \label{cor:negresconcrete}
If an intermediate logic is not one of the seven in Theorem~\ref{thm:maksimova}, then it does not have a sequent calculus only consisting of focused axioms and (context-sharing) semi-analytic rules.
\end{corollary}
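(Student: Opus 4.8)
The plan is to obtain this statement as instance (III) of the general method of Subsection~\ref{sec:method} by a direct contraposition, feeding in the two ingredients that have already been established. First I would invoke Maksimova's classification (Theorem~\ref{thm:maksimova}), which asserts that the seven listed logics are \emph{exactly} the intermediate logics possessing Craig interpolation. Consequently, for any intermediate logic $\lgc$ outside this list, $\lgc$ fails to have interpolation. This is precisely part (II) of the method: no logic in the relevant class $\SCL$ (here, the intermediate logics not among the seven) has the property $\PL$ of interpolation.

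Next I would apply part (I), already recorded as Corollary~\ref{cor:dycinter}: any intermediate logic admitting a sequent calculus built solely from focused axioms and (context-sharing) semi-analytic rules must have interpolation. Contraposing this statement yields exactly Corollary~\ref{cor:negresinter}, namely that an intermediate logic lacking interpolation cannot have such a calculus. Combining the two ingredients completes the argument: since $\lgc$ is not one of the seven, it lacks interpolation by Theorem~\ref{thm:maksimova}, and therefore by Corollary~\ref{cor:negresinter} it has no sequent calculus consisting only of focused axioms and (context-sharing) semi-analytic rules, which is what we wanted.

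At this level the proof is essentially a one-line contraposition, so there is no genuine obstacle remaining; the contraction to the desired conclusion is routine once the two halves are in place. The real content, the \emph{hard part}, lies upstream in part (I): it is Theorem~\ref{thm:dycinter}, resting on Lemmas~\ref{lem:rightsemi}, \ref{lem:leftsemi}, and \ref{lem:focusax}, which establish that extending \Gthi\ by semi-analytic rules and focused axioms preserves sequent interpolation in a uniform, rule-by-rule fashion, together with Maksimova's nontrivial algebraic classification supplying part (II). The interest of the corollary is thus not in its proof but in what it demonstrates about the method: a single preservation result plus a known interpolation classification jointly rule out well-behaved calculi for an entire infinite family of logics.
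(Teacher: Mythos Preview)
Your proposal is correct and matches the paper's approach exactly: the corollary is stated without proof in the paper precisely because it is the immediate combination of Corollary~\ref{cor:negresinter} (the contrapositive of Corollary~\ref{cor:dycinter}) with Maksimova's classification (Theorem~\ref{thm:maksimova}), as you describe. Your commentary on where the real content lies is also accurate and aligns with the paper's framing of the method in Subsection~\ref{sec:method}.
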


\begin{theorem} \citep{maksimova91}
 \label{thm:modalmaksimova}
Among the normal extensions of \Sf\ there are at most 37 logics with interpolation. Exactly 6 normal extensions of \Grz\ have interpolation.

\end{theorem}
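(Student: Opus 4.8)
The plan is to prove this algebraically rather than proof-theoretically. The key tool is Maksimova's characterization (the same machinery underlying Theorem~\ref{thm:maksimova}, \citep{maksimova77}): a normal modal logic above $\Sf$ has the Craig interpolation property if and only if the corresponding variety of \emph{interior algebras} (closure algebras) enjoys the \emph{super}amalgamation property (SAP). First I would set up this bridge. Extensions of $\Sf$ are in dual correspondence with varieties of interior algebras, and for these algebraizable systems CIP translates exactly into SAP of the algebraic counterpart; for the $\Grz$ clause the relevant variety is that of Grzegorczyk algebras. The whole problem thereby becomes: classify the varieties of (Grzegorczyk) interior algebras that superamalgamate.

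Next I would exploit the finite model property and duality to make the classification tractable. All the logics under consideration have the FMP, and by J\'onsson--Tarski/Esakia duality the finite subdirectly irreducible interior algebras correspond to finite rooted $\Sf$-frames (preorders, or equivalently their cluster-posets); for $\Grz$ the relevant frames are finite partial orders. Since a variety is determined by its finite SI members, SAP can be analysed frame-theoretically: one studies whether spans of finite frames --- connected by the duals of the algebra embeddings --- can be completed by a single frame via p-morphisms in the order-respecting manner that \emph{super}amalgamation demands.

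The core of the argument, and the main obstacle, is the enumeration itself. Since $\Sf$ has continuum many normal extensions, the decisive point is a \emph{rarity} statement: SAP is so restrictive that only finitely many varieties satisfy it. I would isolate a finite list of ``critical'' finite frames and show that whenever a variety contains a subdirectly irreducible algebra lying outside a tightly controlled family, amalgamation fails --- by exhibiting, explicitly, three algebras over a common subalgebra that admit no superamalgam. This cuts the candidates down to a finite set, for each of which one must then \emph{positively} verify SAP, e.g.\ by constructing the amalgam directly on the dual frames. Counting the survivors yields the bound of at most $37$.

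For the $\Grz$ clause I would route through the Blok--Esakia/G\"odel--McKinsey--Tarski correspondence, which gives a lattice isomorphism between superintuitionistic logics and normal extensions of $\Grz$, together with the transfer result relating CIP of a si-logic to CIP of its modal companion. The seven intermediate logics of Theorem~\ref{thm:maksimova} then pin down almost all of the $\Grz$-extensions with interpolation; the remaining work is to check, by the same SAP/frame analysis, which of these companions actually inherit interpolation, accounting for the single discrepancy that brings the count from seven down to exactly six. The hardest and most delicate step throughout is guaranteeing \emph{completeness} of the enumeration: verifying that the explicit amalgamation counterexamples genuinely cover every variety outside the finite list, so that no logic with interpolation is overlooked.
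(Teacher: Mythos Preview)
The paper does not prove this theorem at all: it is stated as a result from the literature, attributed to \citep{maksimova91}, and used as a black box---it plays the role of part~(II) in the method of Subsection~\ref{sec:method}, supplying the external fact that most extensions of \Sf\ and \Grz\ lack interpolation, from which the negative corollary about sequent calculi is then drawn. There is therefore no proof in the paper against which to compare your proposal.

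For what it is worth, your outline is broadly faithful to Maksimova's own approach in the cited work: the bridge between CIP and the superamalgamation property for the associated variety of interior (resp.\ Grzegorczyk) algebras, the reduction to finite subdirectly irreducibles via the finite model property and duality, and the finite enumeration of surviving varieties. One point where your sketch is a bit optimistic is the \Grz\ clause: CIP does not transfer mechanically along the Blok--Esakia correspondence, so the drop from seven to six is not obtained by first listing the seven intermediate logics and then checking their companions; Maksimova's argument for \Grz\ proceeds by a direct amalgamation analysis in the variety of Grzegorczyk algebras. But since the paper itself treats the whole theorem as an imported citation, none of this bears on the comparison you were asked to make.
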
 
 \begin{corollary}
The following do not have sequent calculi that only consist of focused axioms and semi-analytic rules:
\begin{itemize}
    \item 
    all but the 37 (at most) extensions of \Sf\ with interpolation
    \item 
    all but the 6 extensions of \Grz\ that have interpolation.
\end{itemize}
\end{corollary}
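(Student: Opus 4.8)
The plan is to instantiate the three-step method of Subsection~\ref{sec:method}. Take $\CL$ to be the normal extensions of \Sf\ (respectively of \Grz), let $\PL$ be Craig interpolation, and let $\SC$ be the class of calculi built solely from focused axioms and semi-analytic rules; then $\SCL$ is the subclass of $\CL$ consisting of the logics that lack interpolation, which is indeed a subset of $\CL$. Part (II) is supplied outright by Theorem~\ref{thm:modalmaksimova}: since at most $37$ extensions of \Sf\ and exactly $6$ extensions of \Grz\ have interpolation, every logic outside these finite lists belongs to $\SCL$ and fails $\PL$. Granting part (I), the desired statement is then part (III), obtained by exactly the contraposition used in Corollary~\ref{cor:negresinter}: a logic in $\SCL$ cannot possess a calculus in $\SC$. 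Note that in the modal setting ``semi-analytic rule'' is to be read in its appropriate modal generalization, so that the box rules of a modal calculus are admitted into $\SC$; the literal Definition~\ref{def:semi-ana} contains no modality and hence could never yield a complete calculus for an extension of \Sf.

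Thus the content to be established is part (I): the modal analogue of Corollary~\ref{cor:dycinter}, stating that any extension of \Sf\ (resp.\ \Grz) which has a calculus \G\ built only from focused axioms and (modal) semi-analytic rules enjoys Craig interpolation. I would argue exactly as in the intermediate case. First fix a standard base calculus for the logic --- an \Sf-calculus (resp.\ a \Grz-calculus) in the \GthSf\ style --- and set \G$^*$ to be \G\ together with the axioms and rules of that base. Because the logic is a normal modal logic extending the base, \G$^*$ is a sound and complete calculus for it. It then suffices to show that \G$^*$ has \emph{sequent interpolation}, from which Craig interpolation follows precisely as in the passage after Theorem~\ref{thm:dycint}.

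Sequent interpolation for \G$^*$ is proved by induction on the depth of the derivation, as in Theorem~\ref{thm:dycinter}: the axiom case is the focused-axiom lemma (Lemma~\ref{lem:focusax}), and each semi-analytic rule is handled by the uniform interpolant constructions of Lemmas~\ref{lem:rightsemi} and~\ref{lem:leftsemi}. The one genuinely new ingredient is the modal rules of the base calculus, for which one must prove the modal counterpart of Lemma~\ref{lem:dycint}: that a box rule transmits an interpolant of each premise to an interpolant of the conclusion lying in the common language of the partition. Here I expect the interpolant to be produced by prefixing a box, so that a premise interpolant $\beta$ yields a conclusion interpolant $\Box\beta$, with correctness reducing to normality of the modality (distribution of $\Box$ over $\imp$) together with the base calculus's modal rule.

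The main obstacle is precisely this modal step of part (I). The interpolation-preservation lemmas of the excerpt are stated only ``for any intermediate logic'' and use nothing beyond $\en,\of,\imp$; pushing them through the modal rules requires re-verifying the common-language condition once boxes appear across the cut between the two halves of a partition, and confirming that the box-prefixed interpolant still has all its atoms in the common language of the partitioned conclusion. One must also extend the semi-analytic preservation lemmas to the modal versions of those rules so that the inductive step closes for \G. Once these modal analogues of Lemmas~\ref{lem:dycint}--\ref{lem:leftsemi} and of Lemma~\ref{lem:focusax} are in hand, the rest --- the induction establishing part (I), the appeal to Theorem~\ref{thm:modalmaksimova} for part (II), and the contraposition delivering part (III) --- is routine.
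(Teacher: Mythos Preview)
Your proposal is correct and follows the same three-step scheme the paper intends: the corollary is stated without proof and is meant to be read as the modal instance of Corollary~\ref{cor:negresinter} (part~(I), supplied by \citep{jalali&tabatabai18a}) combined with Theorem~\ref{thm:modalmaksimova} (part~(II)), yielding part~(III) by contraposition. Your identification of the missing modal ingredient and the plan to box the premise interpolant are exactly right.

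One simplification worth noting: you choose to adjoin a full \GthSf\ (resp.\ \Grz) base calculus to \G\ and then treat its specific box rule as a separate case. In the cited work the base one adjoins is the \emph{multi-conclusion classical} calculus (playing the role \Gthi\ plays in Corollary~\ref{cor:dycinter}), not a calculus tailored to \Sf\ or \Grz. Since \G\ is already assumed complete for an extension of \Sf, all the modal content is carried by the (modal) semi-analytic rules of \G\ itself; once the multi-conclusion analogues of Lemmas~\ref{lem:rightsemi} and~\ref{lem:leftsemi} are in place (including the $R_K$-style rules, whose interpolant is indeed $\Box\beta$), there is nothing \Sf- or \Grz-specific left to verify. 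Your route still works, but the ``obstacle'' you flag --- proving interpolation-preservation for the particular \Sf/\Grz rule of the adjoined base --- is an artifact of that choice rather than an intrinsic step.
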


As mentioned above, here we have our first negative results: a large (in this case infinite) class of logics, namely all but the seven intermediate logics with CIP, cannot have calculi in a large class of natural sequent calculi, namely, the class of calculi only consisting of focused axioms and (context-sharing) semi-analytic rules.

\subsection{Negative Results via Uniform Interpolation}
Having seen the method at work for the class of intermediate logics, the question of whether it can be adapted to other logics naturally arises. What about normal modal logics, substructural logics, non-normal modal logics, and intuitionistic modal logics? To obtain negative results for these logics with the method described in Subsection~\ref{sec:method}, one could still use Craig interpolation as the property $\PL$. But as was explained there, the more rare the property $\PL$ is for the logics that one considers, the stronger the negative result is. We therefore turn to a strengthening of CIP, uniform interpolation, defined in the next section. For intermediate logics, the two notions coincide, but, as we will see, for modal logics this is no longer the case, and therefore, using the stronger notion effectively strengthens our results. 

\subsubsection{Uniform Interpolation}
 \label{sec:uip}
A logic $\lgc$ has the {\em Uniform Interpolation Property (UIP)} if for 
any formula $\phi$ and any atom $p$ there exist two formulas, usually denoted by $\Ap \phi$ and $\Ep \phi$, in the language of the logic, that do not contain $p$ or atoms not in $\phi$ and such that for all $\psi$ not containing $p$: 
\[
 \af \psi \imp \phi \Ifff\ \af \psi \imp \Ap \phi \ \ \ \  \ \ \af \phi \imp \psi \Ifff\ \af \Ep\phi \imp \psi. 
\] 
Given a formula $\phi$, its {\em universal uniform interpolant with respect to $p_1\dots p_n$} is $\Apall\phi$, which is short for $\A p_1 (\A p_2 (\dots (\A p_n \phi)\dots )$, and its {\em existential uniform interpolant with respect to $p_1\dots p_n$} is $\Epall\phi$, which is short for $\E p_1 (\E p_2 (\dots (\E p_n \phi)\dots )$. The requirements above could be replaced by the following four requirements, that explain the names of the uniform interpolants:  
\[ 
 \tag{$\A$} \label{eqexu}
  \af \Ap\phi \imp \phi \ \ \ \  \ \ \af \psi \imp \phi \Imp \af \psi \imp \Ap \phi. 
\]
\[ 
 \tag{$\E$} \label{eqexe}
  \af \phi \imp \Ep \phi\ \ \ \  \ \ \af \phi \imp \psi \Imp \af \Ep\phi \imp \psi. 
\]
In classical logic, one only needs one quantifier, as $\Ep$ can be defined as $\neg\Ap \neg$ and vice versa. Although in the intuitionistic setting $\Ep$ can also be defined in terms of $\Ap$, namely as $\Ep\phi = \A q(\Ap(\phi \imp q) \imp q)$ for a $q$ not in $\phi$, having it as a separate quantifier is convenient in the proof-theoretic approach presented here (we follow \citep{pitts92}, which also uses both quantifiers). 

It is easy to see that uniform interpolation implies interpolation. For suppose $\af_\lgc \phi\imp\psi$, and let $\bar p=p_1,\dots,p_n$ be all the atoms that occur in $\psi$ but not in $\phi$. Hence, $\Apall \psi \imp \psi $ and $\phi \imp \Apall \psi $ are derivable in \lgc. Since all atoms in $\Apall\psi$ have to occur in both $\phi$ and $\psi$, it is indeed an interpolant for $\phi\imp\psi$. 

To define uniform interpolants in the setting of sequents, we introduce the notion of a partition, which applies to sequents and rules. 
A {\em partition} of a sequent $S$ is an ordered pair $(S^r,S^i)$ ($i$ for {\em interpolant}, $r$ for {\em rest}) such that $S = S^r \cdot S^i$, where the {\em multiplication} of the sequents $S_1$ and $S_2$ is defined as $S_1 \cdot S_2:= S_1^{a} \cup S_2^{a} \Rightarrow S_1^s \cup S_2^s$. It is a {\em $p$--partition} if $p$ does not occur in $S^r$. 
A sequent calculus \G\ has {\it uniform sequent interpolation} if for all sequents $S$ and all atoms $p$ there are formulas $\Ap S$ and $\Ep S$ in $\lang$ that do not contain $p$ nor any atoms that do not occur in $S$, and such that the following holds, where $\af$ stands for derivability in \G:

\begin{itemize}
\item[($\A$l)]  $\af S^a,\Ap S \seq S^s$; 

\item[($\E$r)]  $\af S^a \seq \Ep S$; 

\item[($\A\E$)] 
If $S$ is derivable, for all $p$--partitions $(S^r,S^i)$ of $S$: $\af S^r \cdot (\Ep S^i \seq \Ap S^i)$.
\end{itemize}
In the case of a single-conclusion sequent calculus, the last requirement is replaced by 
\[
  \begin{array}{ll}
   \af S^r \cdot (\Ep S^i \seq \Ap S^i) & \text{if $S^s\neq\varnothing$ and $S^{rs}=\varnothing$} \\
   \af S^r \cdot (\Ep S^i \seq \ ) & \text{if $S^s=\varnothing$ or $S^{rs}\neq\varnothing$.}
  \end{array}
\]
Properties ($\A$l) and ($\E$r) are the {\em independent} (from partitions) {\em interpolant properties}, and ($\A\E$) is the {\em dependent interpolant property}. It is not hard to see that whenever a logic $\lgc$ has a sequent calculus that has uniform sequent interpolation, $\lgc$ has uniform interpolation: for $\Ap \psi$ we take $\Ap (\ \seq \psi)$ and for $\Ep \phi$ we take $\Ep (\phi \seq \ )$. 

\begin{example}
In classical logic: 
\[
 \begin{array}{lll}
  \text{sequent } S & \Ap S & \Ep S \\
  \\
  \text{ if }\af S  & \top  & \bot  \\
  p\seq q           & q     & \neg q \\
  q \seq p          & \neg q     & q
 \end{array}
\]
\end{example}

For intermediate logics, the two notions of interpolation coincide (\cite{ghilardi&zawadowski02}), but for modal logics they do not.  For example, \Kf\ and \Sf\ have interpolation but not uniform interpolation \citep{bilkova07, ghilardi&zawadowski95}. The difference between the two notions also shows in the difficulty of proving them. For interpolation, these are often relatively straightforward proofs but proofs of uniform interpolation tend to be hard. The proof by \cite{pitts92} for \IPC\ is complicated, as are the semantics proofs of uniform interpolation for modal logics that appeared after that (for instance, \cite{shavrukov93} and \cite{visser1,visser2}, for $\mathsf{K}$ and $\mathsf{GL}$, respectively). Pitts' proof is proof-theoretic and forms the inspiration for the proof-theoretic proofs of the theorems presented in this exposition.

\subsection{Positive and Negative Results for (Intuitionistic) Modal Logics}
 \label{sec:intmodal}

Having defined the strengthened notion of interpolation, namely uniform interpolation, negative results for several classes of logics can be obtained: for classical and intuitionistic modal logics, for substructural logics, and substructural logics extended with modalities. Positive results stating that some logics in these classes actually do have uniform interpolation will be also discussed. For a nice survey on intuitionistic modal logic see \cite{simpson94}.

First we extend Corollary~\ref{cor:dycinter} to uniform interpolation:

\begin{theorem}
\citep{iemhoff17b,jalali&tabatabai18b}  \label{thm:uipinterm} Any intermediate logic that has a terminating sequent calculus that is an extension of \Gdyc\ by focused axioms and (context-sharing) semi-analytic rules has uniform interpolation. 
\end{theorem}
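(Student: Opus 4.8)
The plan is to follow Pitts' proof-theoretic method, adapting the construction of the uniform interpolants so that it works uniformly for every calculus $\G$ of the stated form. Fix an atom $p$. The idea is to define, simultaneously for all sequents $S$, two formulas $\Ap S$ and $\Ep S$ by recursion along the well-order $\sml$ that witnesses termination of $\G$, and then to verify the three requirements ($\A$l), ($\E$r), ($\A\E$) of uniform sequent interpolation along the same order. That uniform interpolation of $\G$ then yields uniform interpolation of the logic, exactly as noted after the definition of uniform sequent interpolation (take $\Ap(\ \seq\psi)$ and $\Ep(\phi\seq\ )$).

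First, the recursion. Since $\G$ is instance finite, for each $S$ there are only finitely many instances of rules of $\G$ with conclusion $S$, and by the well-order condition the premises of each such instance, as well as every proper subsequent of $S$, are strictly smaller than $S$ in $\sml$. Hence when defining $\Ap S$ and $\Ep S$ we may assume $\Ap S'$ and $\Ep S'$ are already available for every $S' \sml S$. Following Pitts, $\Ep S$ is taken to be a finite disjunction and $\Ap S$ a finite conjunction whose components are indexed by the ways $S$ can be the conclusion of a rule: each right semi-analytic rule contributes a component built from the $\Ep$-interpolants of its premises as in Lemma~\ref{lem:rightsemi}; each left (context-sharing) semi-analytic rule contributes a component following the two cases of Lemma~\ref{lem:leftsemi}; the non-$p$ formulas of $S$ contribute components obtained by decomposing them via the \Gdyc-rules as in Lemma~\ref{lem:dycint}; and the focused axioms contribute base components as in Lemma~\ref{lem:focusax}. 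The point of the construction is that every $p$-containing formula of $S$ is ``used up'' by applying the corresponding rule to a strictly smaller premise, so that $p$ never enters $\Ap S$ or $\Ep S$ and no atom outside $S$ appears, which gives the variable condition.

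Next, the verification. The independent properties ($\A$l) $\af S^a,\Ap S\seq S^s$ and ($\E$r) $\af S^a\seq\Ep S$ go by induction on $\sml$: each conjunct of $\Ap S$ (each disjunct of $\Ep S$) is designed precisely so that the relevant sequent becomes derivable after one application of the rule that introduced that component, using the induction hypothesis on the smaller premises. The real work is the dependent property ($\A\E$): given a $p$--partition $(S^r,S^i)$ of a derivable $S$, one must show $\af S^r\cdot(\Ep S^i\seq\Ap S^i)$. Here one argues by induction on a derivation of $S$, inspecting the last rule $\rsch$, restricting the partition to the premises, applying the induction hypothesis there, and then reassembling exactly as in Lemmas~\ref{lem:rightsemi} and~\ref{lem:leftsemi}, using that $\rsch$ is (right, or left context-sharing) semi-analytic. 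The disjunctive shape of $\Ep S^i$ and the conjunctive shape of $\Ap S^i$ are what allow a single premise-interpolant to match one disjunct, respectively conjunct, of the aggregated interpolant.

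The hard part will be organizing the simultaneous recursion so that all three properties really go through together, and in particular handling the left semi-analytic rules: the second clause of Lemma~\ref{lem:leftsemi}, with its nested implication $\big(\ben\cdots\en\ben\cdots\big)\imp\bof\cdots$, makes the bookkeeping for ($\A\E$) delicate, and one must check that the \Gdyc-specific decompositions such as $Lp\!\imp$ and $L\!\imp\imp$ strictly decrease the weight-based well-order $\sml_w$ witnessing termination. The termination hypothesis is used essentially at this point: without a well-order in which premises and subsequents are strictly smaller, the recursive definition of $\Ap S$ and $\Ep S$ would not be well-founded. This is exactly where the method needs ``terminating'' rather than merely ``cut-free'', and where it fails for calculi such as \GthSf\ and \GthGL.
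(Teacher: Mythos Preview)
The paper itself does not prove this theorem; it only cites \citep{iemhoff17b,jalali&tabatabai18b} and moves on to corollaries, so there is no in-paper proof to compare against line by line. That said, your outline is faithful to the method of those cited papers: Pitts-style simultaneous recursive definition of $\Ap S$ and $\Ep S$ along the well-order witnessing termination, followed by an inductive verification of ($\A$l), ($\E$r) along $\sml$ and of ($\A\E$) by induction on the derivation of $S$. You also correctly flag the genuinely difficult spots (the second clause of the left semi-analytic case, and the \Gdyc-specific rules $Lp\!\imp$ and $L\!\imp\imp$), and you are right that termination is what makes the recursion well-founded.

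One point worth tightening: you lean on Lemmas~\ref{lem:rightsemi} and~\ref{lem:leftsemi} as if they transfer directly, but those lemmas produce \emph{Craig} interpolants from given interpolants of the premises. In the uniform setting the interpolants $\Ap S$, $\Ep S$ are fixed in advance, independently of any particular derivation, so what one actually needs are uniform analogues: that the component of $\Ap S$ (respectively $\Ep S$) indexed by a rule instance $\rsch$ really is derivable from the already-defined $\Ap$/$\Ep$ of the premises, and, for ($\A\E$), that the interpolant of the relevant premise partition occurs as a disjunct/conjunct of the aggregated $\Ep S^i$/$\Ap S^i$. You gesture at this with ``one disjunct, respectively conjunct'', which is the right idea, but the actual bookkeeping---making sure the recursive clauses enumerate \emph{all} $p$-partitions of \emph{all} rule instances with conclusion $S$, and that this set is finite thanks to instance finiteness---is where the proof lives. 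Your sketch is correct in shape; just be aware that the lemmas you cite are not reusable as stated and must be reproved in their uniform versions.
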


This again leads to the negative result in Corollary~\ref{cor:negresconcrete}.
Theorem~\ref{thm:uipinterm} also has a positive corollary, namely a proof of Pitts' celebrated proof of the uniform interpolation of \IPC:

\begin{corollary} \label{Cor: IPC}
\IPC\ has uniform interpolation. 
\end{corollary}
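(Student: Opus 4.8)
The plan is to obtain the corollary as a direct instantiation of Theorem~\ref{thm:uipinterm}, taking \IPC\ in the role of the intermediate logic and \Gdyc\ itself as the witnessing calculus. First I would recall from Subsection~\ref{Sec: Terminating} that \Gdyc\ is a terminating sequent calculus and that, by Dyckhoff's equivalence result, the sequents provable in \Gdyc\ are exactly those provable in \Gthi. Since \Gthi\ is a calculus for \IPC\ in the sense of Subsection~\ref{sec:seqcaldef}, so is \Gdyc. Thus \Gdyc\ is precisely the kind of object to which Theorem~\ref{thm:uipinterm} is meant to apply: a terminating sequent calculus for an intermediate logic.

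Next I would verify the remaining hypothesis of Theorem~\ref{thm:uipinterm}, namely that \Gdyc\ is an extension of \Gdyc\ by focused axioms and (context-sharing) semi-analytic rules. This holds for the trivial reason that \Gdyc\ is obtained from \Gdyc\ by adding the \emph{empty} collection of further axioms and rules. With the hypotheses in place, applying Theorem~\ref{thm:uipinterm} to \Gdyc\ yields at once that \IPC\ has uniform interpolation, which is exactly the assertion of the corollary.

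The one point where care is needed — and the place where a careless reading might stumble — is that the hypothesis of Theorem~\ref{thm:uipinterm} constrains only the rules one \emph{adds} to \Gdyc, not the rules of \Gdyc\ itself. In particular, the observation made after Definition~\ref{def:semi-ana} that $Lp\!\imp$ fails to be semi-analytic is no obstacle here: that rule belongs to the base calculus \Gdyc\ and so is covered by the theorem's treatment of \Gdyc, rather than being one of the added semi-analytic rules that must satisfy the form in \eqref{eq:leftsar} or \eqref{eq:rightsar}. Since we add nothing at all, the semi-analyticity condition is satisfied vacuously.

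Consequently the corollary is a one-step deduction, and I would expect no genuine difficulty at this level; the entire weight of the argument is carried by Theorem~\ref{thm:uipinterm} itself, which abstracts Pitts' proof-theoretic method into a statement about terminating calculi of the prescribed shape. The hard part — the construction of the uniform interpolants $\Ap\phi$ and $\Ep\phi$ by the simultaneous induction witnessing properties ($\A$l), ($\E$r), and ($\A\E$) of uniform sequent interpolation for \Gdyc, together with the verification that the induction terminates thanks to the well-order $\sml_w$ — lives inside the proof of Theorem~\ref{thm:uipinterm} and is not reproved here.
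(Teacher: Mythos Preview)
Your proposal is correct and matches the paper's approach: both apply Theorem~\ref{thm:uipinterm} directly to \Gdyc, viewed as the trivial (empty) extension of itself. Your handling of the $Lp\!\imp$ issue is exactly what the paper intends when it remarks that this rule is ``investigate[d] separately''---it is absorbed into the proof of Theorem~\ref{thm:uipinterm} as part of the base calculus rather than being one of the added semi-analytic rules.
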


For the proof of Corollary \ref{Cor: IPC}, we apply Theorem \ref{thm:uipinterm} on the terminating sequent calculus \Gdyc. Note that every rule, except for $Lp\to$ is semi-analytic and we investigate this case separately.

We turn to modal logics, and in order to obtain the negative results that we aim for we have to first extend the notion of semi-analytic rules to modal rules. As we will see, these are just the standard rules for \K\ and {\sf D} in a classical or intuitionistic setting. This is a modest extension, and we do think that these rules can be generalized further, but already for these versions the proofs of uniform interpolation are quite complicated, which is why we restrict ourselves to the modal rules just mentioned. 

\begin{definition}
A rule is {\it modal semi-analytic} if it has one of the following forms:
\[
 \AxiomC{$\Ga\seq\phi$}
 \RightLabel{$R_K$}
 \UnaryInfC{$\bx\Ga \seq \bx\phi,\De$}
 \DisplayProof
 \ \ \ \ 
\AxiomC{$\Ga\seq$}
 \RightLabel{$R_D$}
 \UnaryInfC{$\bx\Ga \seq \De$}
 \DisplayProof
\]
\end{definition}

In the case of a single-conclusion calculus, the $\De$ is empty in $R_K$ and contains at most one formula in $R_D$. In the context of modal logics, a rule is {\it semi-analytic} if it is either right or left semi-analytic or a modal semi-analytic rule. Let us stress that the intuitionistic modal logics that we consider here have only one modality, $\bx$, and not a diamond $\dm$, which is why they are denoted with a subscript $\bx$, as in $\iK$, as is common in the literature. 

First, we define the intuitionistic normal modal logic without the diamond operator, $\iK$, by introducing its sequent calculus, $\mathsf{G4iK}_\Box$: take the calculus \Gdyc\ as in Figure \ref{fig:Gdyc} and add the following two rules:
\[
\AxiomC{$\Gamma \Rightarrow \phi$}
\UnaryInfC{$\Pi, \Box \Gamma \Rightarrow \Box \phi$}
\DisplayProof
\ \ \ \
\AxiomC{$\Gamma \Rightarrow \phi$}
\AxiomC{$\Pi, \Box \gamma, \psi \Rightarrow \Delta$}
\BinaryInfC{$\Pi, \Box \Gamma, \Box \phi \to \psi \Rightarrow \Delta$}
\DisplayProof
\]
The logic $\iKD$ is defined by its sequent calculus $\mathsf{G4iKD}_\Box$ as the extension of $\mathsf{G4iK}_\Box$ with the following rule:
\[
\AxiomC{$\Gamma, \phi \Rightarrow$}
\UnaryInfC{$\Pi, \Box \Gamma, \Box \phi \Rightarrow \Delta$}
\DisplayProof
\]
For classical modal logics, the notion of a semi-analytic (modal) rule can be defined for multi-conclusion calculi, analogously to the definition for single-conclusion calculi given in Definition~\ref{def:semi-ana}. Due to the lack of space, we will not spell out the details. 
For the definitions of the logics used in the following results, see Table \ref{tableAxiom} in Subsection~\ref{Sec: Terminating}.

First, consider the following results, that form part (I) of our method (Subsection~\ref{sec:method}). 

\begin{theorem} \citep{iemhoff17b,jalali&tabatabai18b}
 \label{thm:uipmodal}
A classical modal logic that has a terminating calculus that consists of focused axioms and semi-analytic rules has 
      uniform interpolation.
\end{theorem}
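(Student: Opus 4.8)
Write $\lgc$ for the given modal logic and $\G$ for its terminating calculus of focused axioms and (classical, multi-conclusion) semi-analytic rules. The plan is to adapt the proof-theoretic, Pitts-style argument behind Theorem~\ref{thm:uipinterm} to this setting, the genuinely new ingredient being the modal semi-analytic rules $R_K$ and $R_D$. Fix an atom $p$. For every sequent $S$ I would define two formulas $\Ap S$ and $\Ep S$, containing neither $p$ nor any atom absent from $S$, by recursion along the well-order $\sml$ provided by the termination of $\G$, and then verify the three clauses ($\A$l), ($\E$r), ($\A\E$) of uniform sequent interpolation in their multi-conclusion form. Since $\G$ is a calculus for $\lgc$, once uniform sequent interpolation holds the uniform interpolants of $\lgc$ are read off as $\Ap\phi := \Ap(\ \seq\phi)$ and $\Ep\phi := \Ep(\phi\seq\ )$, as observed after the definition of uniform sequent interpolation.

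For the definition: because $\G$ is finite and instance finite, there are only finitely many axioms and rule instances whose conclusion is $S$ (for each way of splitting $S$ into principal formulas plus a context of multiset variables), and by the well-ordered property all their premises, together with all proper subsequents of $S$, are $\sml$-smaller than $S$, so their interpolants are already available. I would take $\Ep S$ to be the disjunction, and $\Ap S$ the conjunction, of a canonical contribution for each such axiom and rule instance. For the focused axioms the contribution is furnished by Lemma~\ref{lem:focusax}. For the right and left semi-analytic rules the contributions are assembled from the premise-interpolants exactly as those interpolants are combined in (the multi-conclusion analogues of) Lemmas~\ref{lem:rightsemi} and~\ref{lem:leftsemi}, so these cases run parallel to the intermediate proof of Theorem~\ref{thm:uipinterm}.

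The hardest and genuinely new part is the modal clause. Consider $R_K$, whose conclusion is $\bx\Ga \seq \bx\phi,\De$. The decisive structural fact is the third bullet of the well-ordered condition together with its subsequent clause: the unboxed premise $\Ga\seq\phi$ precedes the boxed conclusion in $\sml$, so $\Ap(\Ga\seq\phi)$ and $\Ep(\Ga\seq\phi)$ are defined, and I would take the $R_K$-contribution to be a boxed interpolant, roughly $\bx\,\Ap(\Ga\seq\phi)$ on the universal side and $\bx\,\Ep(\Ga\seq\phi)$ on the existential side, with the analogous reading for $R_D$ (whose premise is $\Ga\seq\ $). Establishing ($\A$l) and ($\E$r) for these contributions needs just one application of $R_K$ (resp.\ $R_D$) to the derivations furnished by the induction hypothesis. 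The real obstacle is the dependent clause ($\A\E$) across a modal rule: for a $p$-partition $(S^r,S^i)$ of the boxed conclusion one must derive $S^r\com(\Ep S^i\seq\Ap S^i)$, which forces one to push interpolants under the box. Here normality is indispensable --- the axiom $\mathsf{k}$ (monotonicity of $\bx$) converts a derivation of $\Ep(\Ga\seq\phi)\seq\Ap(\Ga\seq\phi)$, supplied by the induction hypothesis on the smaller premise, into one of $\bx\Ep(\Ga\seq\phi)\seq\bx\Ap(\Ga\seq\phi)$, which is then recombined with the non-modal part $S^r$. Making the partition bookkeeping commute with the box, and checking that $R_D$ (which discards the succedent) does not spoil the dependent property, is where the bulk of the effort lies; I expect this modal case of ($\A\E$) to be the single hardest step, and the reason the theorem is confined to the $\K$- and $\D$-style modal rules rather than arbitrary ones.

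All three clauses are then proved simultaneously by induction along $\sml$. Clauses ($\A$l) and ($\E$r) follow fairly directly: from the induction hypothesis on the premises and the defining conjuncts/disjuncts one reapplies the generating rule to obtain the required derivations, using that $\Ap S$ is a conjunction (hence strong enough in the antecedent) and $\Ep S$ a disjunction (hence derivable from $S^a$). Clause ($\A\E$) is handled by case analysis on which contribution is relevant to the given $p$-partition: the axiom case by Lemma~\ref{lem:focusax}, the propositional cases by the semi-analytic combination lemmas, and the modal case by the box-pushing argument above. This completes the verification of uniform sequent interpolation, and hence yields uniform interpolation for $\lgc$.
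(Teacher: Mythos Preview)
The paper does not actually contain a proof of this theorem: it is stated with citations to \citep{iemhoff17b,jalali&tabatabai18b} and then immediately followed by its corollaries. So there is no in-paper proof to compare against.

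That said, your sketch is the correct Pitts-style strategy used in those references: define $\Ap S$ and $\Ep S$ by recursion along the termination order as (roughly) a conjunction and disjunction over the finitely many axiom- and rule-instances with conclusion $S$, handle the propositional semi-analytic rules by the combination patterns of Lemmas~\ref{lem:rightsemi} and~\ref{lem:leftsemi}, and treat $R_K$/$R_D$ by boxing the premise-interpolants. You have also correctly located the main difficulty in the dependent clause ($\A\E$) across a modal inference. One point to be more careful about is that in the actual construction the defining conjuncts/disjuncts range not just over rule instances but over the relevant $p$-partitions of the premises (and over proper subsequents), so that ($\A\E$) can be matched to a particular conjunct/disjunct; your phrase ``for each such axiom and rule instance'' slightly understates this bookkeeping, though you allude to it later. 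With that refinement your outline matches the argument in the cited sources.
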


From a similar result in \citep{iemhoff17b}, but then for intuitionistic modal logics, we obtain the following positive results, known from the literature \citep{ghilardi&zawadowski95, Pattinson13, iemhoff17b}. 

\begin{corollary}
 \K, \KD, $\iK$, and $\iKD$ have uniform interpolation.
\end{corollary}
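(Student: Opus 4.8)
The plan is to treat the four logics in two groups — the classical pair $\K,\KD$ and the intuitionistic pair $\iK,\iKD$ — and in each case to reduce the claim to a single application of Theorem~\ref{thm:uipmodal} (for the classical logics) or its intuitionistic counterpart from \citep{iemhoff17b} (for the intuitionistic logics). Since both of those theorems derive uniform interpolation from the existence of a \emph{terminating} calculus built only from \emph{focused axioms} and \emph{semi-analytic rules}, the entire argument amounts to exhibiting such a calculus for each logic and checking that its axioms and rules meet these two syntactic requirements together with termination.

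For $\K$ and $\KD$ I would use the calculi $\GthK$ and $\GthKD$. Their termination with respect to the order $\sml_d$ was already recorded in Subsection~\ref{Sec: Terminating}, so that condition is in hand, and it remains only to inspect their axioms and rules. The axioms are the generalized {\sf G3} axioms, which are focused axioms; the propositional rules are inherited from $\Gth$ and are exactly the left and right semi-analytic rules noted after Definition~\ref{def:semi-ana}; and the only genuinely modal rules are $R_K$ (for $\K$) and, in addition, $R_D$ (for $\KD$), both of which are literally the modal semi-analytic rules of our definition. Hence both calculi satisfy the hypotheses of Theorem~\ref{thm:uipmodal}, which yields uniform interpolation for $\K$ and $\KD$.

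For $\iK$ and $\iKD$ I would use the calculi $\DYK$ and $\DYKD$ introduced just above. These extend $\Gdyc$, whose axioms are focused and most of whose rules are semi-analytic, so the burden is to handle the few remaining rules and to confirm termination. The $\bx$-right rule of $\DYK$ is a modal semi-analytic rule, and the extra rule of $\DYKD$ plays the role of $R_D$; termination follows from the weight order $\sml_w$ extended to $\bx$ via the clause $w(\bx\phi)=w(\phi)+1$, together with the modal clause in the definition of ``well-ordered''. The delicate point is that, exactly as with the $Lp\!\imp$ rule in the proof of Corollary~\ref{Cor: IPC}, the Dyckhoff-style left-implication rules — in particular the boxed-implication left rule of $\DYK$, whose principal formula is $\bx\phi\imp\psi$ — do not literally fall under the schema of Definition~\ref{def:semi-ana} and must be verified by hand. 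Once these cases are checked, the intuitionistic analog of Theorem~\ref{thm:uipmodal} from \citep{iemhoff17b} applies and gives uniform interpolation for $\iK$ and $\iKD$.

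The main obstacle is precisely this separate treatment of the non-schematic left rules in the intuitionistic modal calculi. For the boxed-implication rule one must show that uniform interpolants for the two premises combine, in a way independent of the chosen $p$-partition, into a uniform interpolant for the conclusion, while respecting both the variable condition and the absence of $p$; here the Pitts-style construction interacts with the modal structure and the split of the antecedent, which is the conceptually demanding step. By contrast, the focused-axiom check, the classification of the propositional rules, and the verification of termination are all routine.
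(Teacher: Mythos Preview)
Your proposal is correct and matches the paper's approach: the paper itself gives essentially no proof beyond the sentence preceding the corollary, which points to Theorem~\ref{thm:uipmodal} for the classical pair and to the analogous intuitionistic result in \citep{iemhoff17b} for $\iK$ and $\iKD$, leaving the verification that the relevant calculi meet the hypotheses implicit. You have simply spelled out those verifications (focused axioms, semi-analytic rules, termination, and the separate treatment of the Dyckhoff-style and boxed-implication left rules), exactly in the spirit of the paper's handling of Corollary~\ref{Cor: IPC}.
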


But the theorem also implies negative results. For this, we use the following results from the literature about concrete logics not having uniform interpolation, part (II) of our method (Subsection~\ref{sec:method}). 

\begin{theorem}
 \label{thm:sfkf}
 \Sf\ does not have uniform interpolation \citep{ghilardi&zawadowski95}. 
 \Kf\ does not have uniform interpolation \citep{bilkova06}. 
\end{theorem}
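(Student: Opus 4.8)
The plan is to refute the Uniform Interpolation Property by showing that the one semantic operation it would have to define --- the bisimulation quantifier --- cannot be captured by any single formula of \Sf\ (respectively \Kf). First I would recall the semantic reading of the existential interpolant. Since \Sf\ and \Kf\ are Kripke complete with respect to the reflexive--transitive (resp.\ transitive) frames, one checks that if $\Ep\phi$ exists as a formula then it must be equivalent, over the relevant frame class, to the \emph{bisimulation quantifier}: a pointed model $(M,w)$ satisfies $\Ep\phi$ exactly when there is a pointed model $(M',w')$, over the same frame class, that is bisimilar to $(M,w)$ with respect to every atom other than $p$ and satisfies $\phi$ at $w'$. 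The two defining clauses for the existential interpolant translate precisely into this condition, using that $p$-bisimilar models agree on all $p$-free formulas. So the whole problem reduces to a single question: is this bisimulation quantifier modally definable?

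The core of the argument is then a non-definability construction. I would fix one carefully chosen formula $\phi$ in the atom $p$ together with a few auxiliary atoms, and build two sequences of pointed transitive (reflexive--transitive) models $(M_n, w_n)$ and $(N_n, v_n)$ such that every $M_n$ satisfies the bisimulation quantifier $\Ep\phi$ while no $N_n$ does, yet $(M_n,w_n)$ and $(N_n,v_n)$ satisfy exactly the same formulas of modal degree at most $n$. The latter indistinguishability I would verify through an Ehrenfeucht--Fra\"iss\'e / bounded-depth bisimulation game between the two pointed models. Granting such families, suppose toward a contradiction that some formula $\alpha$ defines $\Ep\phi$, and let $d$ be its modal degree. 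Taking $n \geq d$, the models $(M_n,w_n)$ and $(N_n,v_n)$ agree on $\alpha$, so $\alpha$ fails to separate a model inside the class from one outside it, contradicting that it defines the class. Hence no uniform interpolant exists and UIP fails.

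The hard part will be the model construction, and this is exactly where transitivity does the work. For \K\ one \emph{cannot} build such families --- indeed \K\ has uniform interpolation, as the corollary to Theorem~\ref{thm:uipmodal} records --- because in the non-transitive setting formulas of modal degree $n$ already pin down the relevant structure down to depth $n$, so the bisimulation quantifier stabilises. Under the axiom $(\mathsf{4})$, by contrast, reachable worlds collapse under the transitive closure, so a formula of fixed finite modal degree can no longer ``count'' the depth of a chain; this is what lets one separate the $(M_n,w_n)$ from the $(N_n,v_n)$ semantically while keeping them indistinguishable to every bounded-depth formula. Designing $\phi$ and the two families so that the semantic separation and the bounded-depth agreement hold \emph{simultaneously} is the delicate combinatorial heart of the proof.

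Finally, I should note that the published proofs take a dual, algebraic route rather than this game-theoretic one. The failure of UIP for a modal logic corresponds to the non-existence of a model completion of its variety of modal algebras, and \citet{ghilardi&zawadowski95} establish the \Sf\ case through a detailed analysis, via duality, of the finitely presented interior algebras and their dual spaces, while \citet{bilkova06} treats \Kf. The bisimulation-quantifier plan sketched above is the Kripke-semantic shadow of that algebraic argument, and is the route I would attempt first.
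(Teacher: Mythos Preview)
The paper does not give a proof of this theorem at all: it is stated purely as a citation of results from the literature (\citep{ghilardi&zawadowski95} for \Sf, \citep{bilkova06} for \Kf) and then used as a black box in the corollary that follows. So there is no ``paper's own proof'' to compare against; any argument you supply is necessarily going beyond what the paper does.

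That said, your outline is a faithful high-level description of how such failures are established. The reduction of UIP to definability of bisimulation quantifiers is standard, and the counterexample strategy you describe---families of pointed models that agree on all formulas of bounded modal depth yet disagree on the putative interpolant---is exactly the shape of the arguments in the cited works (B\'ilkov\'a's \Kf\ proof in particular proceeds essentially this way, with a concrete $\phi$ and explicit tree-like models). You also correctly flag that \cite{ghilardi&zawadowski95} take the dual algebraic route via model completions of interior algebras rather than a direct game argument. Where your sketch remains a sketch is precisely where you say it does: you do not specify $\phi$ or the models $M_n$, $N_n$, and that construction is the entire content of the theorem. Your heuristic about transitivity collapsing depth-counting is suggestive but not an argument; turning it into one requires the concrete combinatorics you defer. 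So as a proof \emph{proposal} this is accurate in spirit and honest about its gaps, but it is not yet a proof---nor, to be fair, does the paper ask for one.
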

Combining the theorems above leads to the following negative result (part (III) of our method, Subsection~\ref{sec:method}) for classical modal logics. 

\begin{corollary}
\Kf\ and \Sf\ do not have terminating sequent calculi that only consist of focused axioms and semi-analytic rules. 
\end{corollary}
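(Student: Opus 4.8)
The plan is to instantiate the general contraposition scheme of Subsection~\ref{sec:method}, taking $\CL$ to be the class of classical modal logics, $\SC$ the class of terminating sequent calculi built only from focused axioms and semi-analytic (that is, right, left, or modal semi-analytic) rules, $\PL$ the property of uniform interpolation, and $\SCL=\{\Kf,\Sf\}$. Both halves of the scheme are already available in the text: Theorem~\ref{thm:uipmodal} is exactly part~(I), asserting that any classical modal logic possessing a calculus in $\SC$ enjoys $\PL$, and Theorem~\ref{thm:sfkf} is part~(II), asserting that neither logic in $\SCL$ has $\PL$. The corollary is then the conclusion~(III), and proving it is just a matter of carrying out the contraposition explicitly.

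First I would argue by contradiction for \Kf. Suppose \Kf\ had a terminating sequent calculus \G\ consisting solely of focused axioms and semi-analytic rules. Since \Kf\ is a classical modal logic, Theorem~\ref{thm:uipmodal} applies to \G\ and yields that \Kf\ has uniform interpolation. This directly contradicts Theorem~\ref{thm:sfkf} (the \Kf\ half, from \citep{bilkova06}), so no such \G\ exists. I would then repeat the identical argument verbatim with \Sf\ in place of \Kf, now invoking the \Sf\ half of Theorem~\ref{thm:sfkf}, which is due to \citep{ghilardi&zawadowski95}. This settles both cases.

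Because the reasoning is a pure contraposition, there is essentially no obstacle in the corollary itself; all the genuine difficulty lives upstream in its two ingredients, which I am assuming. The hard part is really Theorem~\ref{thm:uipmodal}: the proof-theoretic transfer of uniform interpolation from a well-behaved calculus to its logic, a modal adaptation of Pitts' construction that must handle the modal rules $R_K$ and $R_D$ alongside the propositional semi-analytic rules. The complementary failures in Theorem~\ref{thm:sfkf} are established by semantic and algebraic means in the cited literature. The only point demanding a little care is to make sure the shape hypotheses line up exactly, namely that the admissible rules of \G\ (focused axioms together with left, right, and modal semi-analytic rules) and the termination requirement are precisely those under which Theorem~\ref{thm:uipmodal} is stated, so that the contradiction is drawn against the very hypotheses that theorem consumes.
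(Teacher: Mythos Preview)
Your proposal is correct and follows exactly the contraposition scheme the paper uses: apply Theorem~\ref{thm:uipmodal} to any hypothetical terminating calculus of the stated shape, obtain uniform interpolation, and contradict Theorem~\ref{thm:sfkf}. The paper's proof additionally cites Theorem~\ref{thm:modalmaksimova}, but that reference is not actually needed for the specific claim about \Kf\ and \Sf\ (both logics do have Craig interpolation, so only the failure of \emph{uniform} interpolation from Theorem~\ref{thm:sfkf} does the work); your two-ingredient argument is already complete.
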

\begin{proof}
Theorems~\ref{thm:uipmodal}, \ref{thm:sfkf}, and \ref{thm:modalmaksimova}. 
\end{proof}

The intuitionistic modal logics $\iKf$ and $\iSf$ are defined as follows:
\[
\iKf:= \iK + (\Box p \to \Box \Box p) \ \  \ \ \iSf:= \iKf + (\Box p \to p)
\]

In her PhD thesis, \cite{Iris} settled that $\iSf$ and $\iKf$ do not have uniform interpolation. 
Thus, we can also conclude: 

\begin{corollary}
$\iKf$ and $\iSf$ do not have terminating sequent calculi that only consist of focused axioms and (context-sharing) semi-analytic rules. 
\end{corollary}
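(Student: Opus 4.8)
The plan is to recognize this corollary as the part (III) conclusion of the general method of Subsection~\ref{sec:method}, instantiated for intuitionistic modal logics. Here the class of logics $\CL=\SCL$ is the intuitionistic normal modal logics, the property $\PL$ is uniform interpolation, and the class of calculi $\SC$ is the terminating calculi built solely from focused axioms and (context-sharing) semi-analytic rules. The two ingredients needed are a positive part~(I) result, that any such logic with such a calculus enjoys uniform interpolation, and a negative part~(II) input, that $\iKf$ and $\iSf$ lack uniform interpolation. The corollary is then immediate by contraposition, exactly as in the passage from Theorem~\ref{thm:uipmodal} and Theorem~\ref{thm:sfkf} to the classical corollary for $\Kf$ and $\Sf$.

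For part~(I), I would invoke the intuitionistic--modal counterpart of Theorem~\ref{thm:uipmodal} (the variant for intuitionistic modal logics alluded to above, from \citep{iemhoff17b}): any intuitionistic modal logic possessing a terminating sequent calculus consisting of focused axioms and (context-sharing) semi-analytic rules has uniform interpolation. To establish it, one extends the proof of uniform sequent interpolation underlying Theorem~\ref{thm:uipinterm} from \Gdyc\ to the presence of the modal rules. Concretely, one defines the uniform interpolants $\Ap S$ and $\Ep S$ by recursion on the well-order witnessing termination, and for each rule shape produces the interpolant of the conclusion from those of the premises so that the independent properties ($\A$l), ($\E$r) and the dependent property ($\A\E$) are preserved; the new cases to check are the modal semi-analytic rules $R_K$ and $R_D$ together with the box--implication rules defining $\iK$ and $\iKD$.

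For part~(II), I would cite the result of van der Giessen's thesis \citep{Iris}, already stated in the excerpt, that neither $\iSf$ nor $\iKf$ has uniform interpolation. Combining the two: if either $\iKf$ or $\iSf$ had a terminating calculus of the prescribed shape, part~(I) would force it to have uniform interpolation, contradicting part~(II). Hence neither logic admits such a calculus, which is precisely the assertion of the corollary.

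The main obstacle lies entirely in part~(I), i.e.\ in establishing the intuitionistic--modal analogue of Theorem~\ref{thm:uipmodal}; the contraposition in part~(III) is routine. Within part~(I) the delicate step is verifying the dependent interpolant property ($\A\E$) across the modal box rules: one must show that pushing a $\bx$ through the Pitts-style quantifiers $\Ap$ and $\Ep$ behaves correctly, so that the interpolants assigned to a boxed conclusion genuinely interpolate in the single-conclusion intuitionistic modal setting, where the interaction of $\bx$ with implication and with the at-most-one-formula-in-the-succedent constraint makes the bookkeeping of the partitions considerably more subtle than in the purely intermediate case of Theorem~\ref{thm:uipinterm}.
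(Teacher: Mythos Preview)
Your proposal is correct and matches the paper's approach exactly: the corollary is obtained by contraposition from the intuitionistic-modal analogue of Theorem~\ref{thm:uipmodal} (part~(I), cited from \citep{iemhoff17b}) together with van der Giessen's result that $\iKf$ and $\iSf$ lack uniform interpolation (part~(II), \citep{Iris}). The paper itself gives no further argument beyond ``Thus, we can also conclude,'' so your identification of the two ingredients and the contraposition is precisely what is intended; your additional discussion of how part~(I) is established goes beyond what the paper spells out here but is accurate.
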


\subsection{Extension to Substructural Logics}
In \citep{jalali&tabatabai18a,jalali&tabatabai18b} the authors prove a negative result, using the Craig and uniform interpolation properties, for substructural logics, both classical (extensions of \CFLe) and intuitionistic (extensions of \FLe) and allowing modal semi-analytic rules as well. Due to the lack of space, we do not introduce the substructural logics but simply state the results. For the definitions of the logics, see \citep{Ono}.

\begin{theorem} 
 \label{thm:uipsubstruc}
\begin{itemize}
 \itm If $\CFLe \subseteq\lgc$ has a (terminating) sequent calculus consisting of focused axioms and semi-analytic rules, then \lgc\ has (uniform) interpolation.
 \itm If $\FLe \subseteq\lgc$ has a (terminating) sequent calculus consisting of focused axioms and semi-analytic rules, then \lgc\ has (uniform) interpolation.
\end{itemize}
\end{theorem}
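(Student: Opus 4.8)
The plan is to reuse the two-layer template that produced Theorems~\ref{thm:dycinter} and~\ref{thm:uipmodal}: prove \emph{sequent interpolation} for the calculus to obtain Craig interpolation of \lgc, and run a Pitts-style recursion to obtain uniform interpolation under the termination hypothesis. For the Craig statement I would argue, exactly as in Corollary~\ref{cor:dycinter}, by first extending the given calculus to one that contains a standard base calculus for \FLe\ (resp.\ \CFLe) --- note that the usual multiplicative and additive rules of these calculi are themselves of the focused-axiom and semi-analytic form --- and then proving by induction on the depth of the derivation that every derivable partitioned sequent $\Ga;\Pi\seq\De$ admits an interpolant $\alpha$ in the common language with $\af\Ga\seq\alpha$ and $\af\Pi,\alpha\seq\De$. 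The base case is the focused-axiom lemma (the analog of Lemma~\ref{lem:focusax}, in the classical form of focused axiom for \CFLe), and the inductive step amounts to substructural analogs of Lemmas~\ref{lem:rightsemi} and~\ref{lem:leftsemi}.

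The essential difference from the intermediate setting is that weakening and contraction are unavailable, so the purely additive combination $\ben_{ik}\alpha_{ik}$ used there must be refined. A semi-analytic rule \emph{merges} the premise contexts $\Ga_1,\dots,\Ga_m$ into the single antecedent $\Ga_1,\dots,\Ga_m$ of its conclusion, while premises sharing a fixed index $i$ share the \emph{same} context $\Ga_i$; the interpolants should therefore be combined additively along the shared index $k$ and \emph{multiplicatively} (by fusion) along the merged index $i$. Concretely, for the right rule~\eqref{eq:rightsar} with partition $\Pi_1,\dots,\Pi_m;\Pi_1',\dots,\Pi_m'\seq\phi$ and premise-interpolants $\alpha_{ik}$, I would set $\alpha=\bigotimes_{i}\ben_k\alpha_{ik}$: from $\Pi_i\seq\alpha_{ik}$ one gets $\Pi_i\seq\ben_k\alpha_{ik}$ by the context-sharing right rule for $\en$, and then $\Pi_1,\dots,\Pi_m\seq\alpha$ by the right rule for fusion; for the converse one projects each $\ben_k\alpha_{ik}$ onto $\alpha_{ik}$, cuts it into $\alpha_{ik},\Pi_i',\ov{\phi}_{ik}\seq\upchi_{ik}$, reapplies the rule to obtain $\ben_k\alpha_{1k},\Pi_1',\dots,\ben_k\alpha_{mk},\Pi_m'\seq\phi$, and finally merges the left conjuncts with the left rule for fusion to reach $\alpha,\Pi_1',\dots,\Pi_m'\seq\phi$. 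No structural rule is used, and since fusion and $\en$ introduce no atoms the common-language condition is inherited from the $\alpha_{ik}$. The left semi-analytic rule is handled in the same spirit; in the single-conclusion \FLe\ case one keeps the two-part bookkeeping of Lemma~\ref{lem:leftsemi}, and in the multi-conclusion \CFLe\ case the succedents are merged dually by the multiplicative disjunction.

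For the uniform interpolation half I would adapt Pitts' construction, now licensed by the termination hypothesis. Let $\sml$ be the well-order witnessing termination. By instance-finiteness only finitely many rule instances have a given sequent $S$ as conclusion, and all their premises are $\sml$-smaller; one may therefore define $\Ep S$ and $\Ap S$ simultaneously by recursion on $\sml$, taking $\Ep S$ to be a disjunction and $\Ap S$ a conjunction, over these instances, of the fusion/additive combinations of the uniform interpolants of the premises dictated by the Craig-case analysis. Properties ($\A$l) and ($\E$r) are then read off the construction, and the dependent property ($\A\E$) is proved by the same $\sml$-induction, combining premise-interpolants by fusion exactly as above so that the required sequents are derivable without weakening or contraction.

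I expect the main obstacle to be the verification of ($\A\E$) in the uniform case. There one must show for every $p$-partition $(S^r,S^i)$ that $S^r\cdot(\Ep S^i\seq\Ap S^i)$ is derivable, which forces a simultaneous analysis of \emph{all} last rules that could produce $S$ and a delicate matching of the recursively defined interpolants across the split between $S^r$ and $S^i$; without structural rules the additive-versus-multiplicative bookkeeping must be threaded consistently through the whole recursion, and when the modal semi-analytic rules $R_K,R_D$ are present they interact with the well-order clause comparing boxed and unboxed sequents. By contrast the Craig half is essentially routine once the fusion-based combinations replacing Lemmas~\ref{lem:rightsemi} and~\ref{lem:leftsemi} are in place.
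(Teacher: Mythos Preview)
The paper does not actually prove Theorem~\ref{thm:uipsubstruc}: it is stated as a citation of \citep{jalali&tabatabai18a,jalali&tabatabai18b}, preceded by the remark that ``due to the lack of space, we do not introduce the substructural logics but simply state the results.'' So there is no in-paper proof to compare your proposal against.

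That said, your sketch is faithful to the method of the cited papers and to the template the present paper develops for the intermediate and modal cases. In particular, you correctly identify the one genuinely new ingredient relative to Lemmas~\ref{lem:rightsemi} and~\ref{lem:leftsemi}: in the absence of weakening and contraction the premise-interpolants must be combined \emph{additively} along the index $k$ that shares a context $\Ga_i$ and \emph{multiplicatively} (via fusion) along the index $i$ where contexts are merged, yielding interpolants of the shape $\bigotimes_i\ben_k\alpha_{ik}$ rather than the flat $\ben_{ik}\alpha_{ik}$ of the structural case. The dual use of multiplicative disjunction on the succedent side in the \CFLe\ case is likewise the right move. Your outline of the uniform half---a Pitts-style simultaneous recursion on the terminating order, with $\Ep S$ a disjunction and $\Ap S$ a conjunction over the finitely many last-rule instances, and the dependent property $(\A\E)$ established by induction on $\sml$---is exactly the strategy of \citep{jalali&tabatabai18b}, and your assessment that the delicate part is threading the additive/multiplicative bookkeeping through the proof of $(\A\E)$ is accurate.
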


\begin{corollary}
 \CFLe, \FLe, and their modal extensions with \K\ and with \KD\  have (uniform) interpolation. 
\end{corollary}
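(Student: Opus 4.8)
The plan is to derive the corollary directly from Theorem~\ref{thm:uipsubstruc} by exhibiting, for each of the six logics named ($\FLe$, $\CFLe$, and their $\K$- and $\KD$-extensions), a suitable sequent calculus. Concretely, for the base logics one takes the standard cut-free Gentzen calculi: the single-conclusion calculus for $\FLe$ and the multi-conclusion calculus for $\CFLe$. For the modal extensions one adds the rule $R_K$ (to obtain the $\K$-extensions) and both $R_K$ and $R_D$ (to obtain the $\KD$-extensions), in their single- or multi-conclusion form as appropriate. Since $\FLe \subseteq \lgc$ (respectively $\CFLe \subseteq \lgc$) for each such $\lgc$, it then suffices to check that each calculus consists only of focused axioms and semi-analytic rules and, for the uniform interpolation claim, that it is terminating; the two assertions of the corollary are then immediate from the two clauses of Theorem~\ref{thm:uipsubstruc}.

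The first verification is that every axiom is focused and every rule semi-analytic. The identity axiom $\phi \seq \phi$ and the constant axioms are focused axioms (in the classical case using the variant in which the restriction on the succedent is lifted). For the logical rules one matches them against Definition~\ref{def:semi-ana}. The additive rules share their context across premises and are immediately context-sharing semi-analytic rules. The multiplicative rules are the point to watch, since there the context is split among the premises; but this is exactly what the semi-analytic template permits, because each premise block carries its own context multiset variable $\Ga_i$ (respectively $\Pi_j$). For instance, the left rule for multiplicative implication, with premises $\Ga \seq \phi$ and $\Pi, \psi \seq \De$ and conclusion $\Ga, \Pi, \phi\imp\psi \seq \De$, fits the left semi-analytic form with the succedent $\phi$ of the first premise playing the role of a consumed $\upchi$, and the variable condition $V(\phi) \cup V(\psi) \subseteq V(\phi\imp\psi)$ holds trivially. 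The modal rules $R_K$ and $R_D$ are by definition modal semi-analytic rules, admitted alongside the left and right semi-analytic rules.

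The second verification, needed only for the uniform interpolation strengthening, is termination. Here the substructural setting works in our favour: since $\FLe$ and $\CFLe$ have neither weakening nor contraction, bottom-up proof search strictly reduces formula complexity at every logical step, so the connective rules are well-ordered by the degree order $\sml_d$ in the classical case and by the weight order $\sml_w$ in the intuitionistic case (built on $\Gdyc$). The modal rules are handled by the third well-ordering condition of Subsection~\ref{Sec: Terminating}, which was designed precisely so that a sequent $\Ga, \Pi \seq \De, \Lam$ with $\Pi \cup \Lam$ nonempty precedes $\Ga, \bx\Pi \seq \De, \bx\Lam$; this is exactly the decrease effected by stripping a layer of boxes and discarding the side contexts. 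Thus the calculi are terminating, in the same manner as $\GthK$ and $\GthKD$ are for $\K$ and $\KD$.

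I expect the main obstacle to be precisely this termination argument in the combined substructural--modal setting, together with the preliminary (and here assumed) fact that these standard calculi are genuinely sound and complete cut-free calculi for the logics named. Once completeness and termination are in hand, the semi-analyticity and focusedness checks are routine, all hypotheses of Theorem~\ref{thm:uipsubstruc} are met, and both the interpolation and the uniform interpolation assertions of the corollary follow.
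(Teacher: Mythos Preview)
Your proposal is correct and follows the approach implicit in the paper, which states the corollary without proof as a direct consequence of Theorem~\ref{thm:uipsubstruc}; your explicit verification that the standard calculi for \FLe, \CFLe, and their modal extensions consist of focused axioms and (modal) semi-analytic rules, and are terminating, is exactly what is required. One minor inaccuracy: the parenthetical ``(built on \Gdyc)'' in your termination argument is out of place, since \FLe\ lacks contraction and therefore needs no Dyckhoff-style reformulation of the implication rule---its standard sequent calculus is already terminating under the degree order $\sml_d$.
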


\section{Positive Results}
 \label{sec:positive}
The method presented in Section~\ref{sec:exiseqcal} can also be used to prove positive instead of negative results, meaning results stating that a logic actually does have uniform interpolation. In this section, we state some of the results of this form.

The flexibility of the method presented here is illustrated by the fact that it can be used to prove positive results (results stating that a logic has UIP) for many logics as well. We state some of the results of this form. 
In most cases, it appears that we can even prove a strengthening of uniform interpolation, namely uniform Lyndon interpolation, 
in which the interpolant respects the polarity of propositional variables (the definition follows below). It first occurred in \citep{kurahashi20}, where it was shown that several normal modal logics, including $\mathsf{K}$ and $\mathsf{KD}$, have that property. 
We define uniform Lyndon interpolation for sequent calculi, but from that definition, one can easily read off the corresponding notion for logics.

Given a sequent $S$, let $V^+(\phi)$ ($V^-(\phi)$) denote the atoms that occur positively (negatively) in $S$ (defined as usual, reading $\imp$ for $\seq$, but see Definition 1 in \citep{jalali&tabatabai&iemhoff} for the precise definition).
A sequent calculus has {\it uniform Lyndon interpolation} ULIP if for both $\circ=+$ and $\circ=-$, for all sequents $S$ and all atoms $p \in V^\circ(\phi)$, there are formulas $\Apar S$ and $\Epar S$ in $\lang$ such that $V^\star(\Apar S) \subseteq V^\star(S)$ and $V^\star(\Epar S) \subseteq V^\star(S)$ for all $\star \in \{+,-\}$, and $p\not\in V^+(\Apar S)$ in case $\circ=+$ and $p\not\in V^-(\Apar S)$ in case $\circ=-$, and such that the following holds, where $\af$ stands for derivability in \G:

\begin{itemize}
\item[($\A$l)]  $\af S^a,\Apar S \seq S^s$; 

\item[($\E$r)]  $\af S^a \seq \Epar S$; 

\item[($\A\E$)] 
If $S$ is derivable, for all partitions $(S^r,S^i)$ of $S$ such that $p\not\in V^\circ(S^r)$: $\af S^r \cdot (\Epar S^i \seq \Apar S^i)$.
\end{itemize}
The single-conclusion version can be obtained from it in the same way as for UIP in Subsection~\ref{sec:uip}.

It seems likely that the positive results given below can be generalized in such a way that one can obtain negative results from them as well. However, at this stage, such generalization has not yet been carried out and we therefore only present the positive results that have already been obtained.

\subsection{Non-normal Modal Logics}

Non-normal modal logics are modal logics in which the $K$-axiom, i.e.\ the axiom $\Box(\phi \imp \psi) \imp (\Box\phi \imp \Box\psi)$, does not hold, but a weaker version that is given by the following $E$-rule does:
\begin{center}
 \AxiomC{$\phi \ifff \psi$}
 \UnaryInfC{$\Box\phi \ifff \Box\psi$}
 \DisplayProof
\end{center}
Thus, the \emph{minimal non-normal modal logic}, {\sf E}, consists of classical propositional logic plus the $E$-rule above. 
Over the last decades, non-normal modal logics have emerged in various fields, such as game theory and epistemic and deontic logic \citep{Chellas80}. Two well-known non-normal modal logics that are investigated in this paper are natural weakenings of the principle 
$\Box(\phi \en\psi) \ifff (\Box\phi \en\Box\psi)$ that implies the principle $K$ over {\sf E}. Namely, the two principles: 
\[
 (M) \ \ \ \Box(\phi \en\psi) \imp (\Box\phi \en\Box\psi) 
 \ \ \ \ \ \ 
 (C) \ \ \ (\Box\phi \en\Box\psi) \imp \Box(\phi \en\psi).  
\]
Another principle of the logic \K\ that is often considered is 
\[ 
 (N) \ \ \ \bx\top.
\]
Because the $K$-axiom holds in the traditional relational semantics for modal logic, non-normal modal logics require different semantics, of which the most well-known is neighborhood semantics. As we do not need semantics in this paper, we refer the interested reader to the textbook \citep{Pacuit11}.

The method that we used to prove ULIP for intermediate and normal (intuitionistic) modal logics also applies here to non-normal modal logics:

\begin{theorem}\citep{jalali&tabatabai&iemhoff}
 \label{thm:non-normal}
The non-normal modal logics $\mathsf{E}$, $\mathsf{M}$, $\mathsf{EN}$, $\mathsf{MN}$, $\mathsf{MC}$ have uniform Lyndon interpolation.
\end{theorem}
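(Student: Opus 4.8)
The plan is to follow the proof-theoretic template of Section~\ref{sec:exiseqcal}: for each of the five logics exhibit a terminating, cut-free sequent calculus built from focused axioms and an appropriately enlarged class of semi-analytic rules, and then prove an analogue of Theorem~\ref{thm:uipmodal} in which the admissible modal rules are the non-normal ones. Concretely, I would first fix calculi over the classical propositional base (the $\Gth$-style focused axioms together with the classical left and right semi-analytic rules), adding modal rules that capture each principle: for $\mathsf{E}$ the congruence rule with two premises $\phi\seq\psi$ and $\psi\seq\phi$ and conclusion $\Gamma,\Box\phi\seq\Box\psi,\Delta$; for $\mathsf{M}$ the single-premise monotonicity rule $\phi\seq\psi \,/\, \Gamma,\Box\phi\seq\Box\psi,\Delta$; for the $\mathsf{C}$-fragment a rule gathering several boxes on the left, with premise $\phi_1,\dots,\phi_n\seq\psi$ and conclusion $\Gamma,\Box\phi_1,\dots,\Box\phi_n\seq\Box\psi,\Delta$; and for $\mathsf{N}$ an extra axiom/rule providing $\Box\top$ on the right. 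One checks that each such rule lowers modal depth in the premises, so the resulting calculi are terminating in the sense of Subsection~\ref{Sec: Terminating}, and that they are sound and complete for the respective logic.

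The heart of the argument is the construction, by recursion on the termination well-order, of the uniform Lyndon interpolants $\Apar S$ and $\Epar S$ for every sequent $S$ and atom $p$, in the style of Pitts \citep{pitts92}. Using instance-finiteness, at each sequent one ranges over the finitely many rule instances whose conclusion is $S$: for $\Epar S$ one disjoins over the ways $S$ can be the conclusion of a right rule and conjoins the contributions of the left rules (dually for $\Apar S$), following the pattern of the semi-analytic lemmas of Section~\ref{sec:exiseqcal}. The genuinely new clauses are the modal ones. For the monotonicity rule, if $\alpha$ is the interpolant of the premise partition then the conclusion interpolant is obtained by boxing, $\Box\alpha$; for the congruence rule of $\mathsf{E}$ one must combine the interpolants of both premises while respecting that the rule couples the two directions $\phi\seq\psi$ and $\psi\seq\phi$; and for the $\mathsf{C}$- and $\mathsf{N}$-clauses the boxed interpolant is formed from the single bundled premise, respectively from the $\Box\top$ axiom.

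I would then verify the three defining clauses $(\A\text{l})$, $(\E\text{r})$ and $(\A\E)$ simultaneously by induction on the well-order, one rule at a time, all the while carrying the polarity bookkeeping that distinguishes Lyndon interpolation from ordinary uniform interpolation: one tracks $V^+$ and $V^-$ separately and checks that each connective — and in particular the box, which preserves polarity — sends atoms into the correct polarity class, so that the constraints $V^\star(\Apar S)\subseteq V^\star(S)$, $V^\star(\Epar S)\subseteq V^\star(S)$ and the exclusion of $p$ from the relevant $V^\circ$ are maintained.

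I expect the main obstacle to be the dependent property $(\A\E)$ in the modal cases. There one is handed a $p$-respecting partition $(S^r,S^i)$ of the conclusion of a modal rule and must split the bundled premise sequent accordingly, interpolate it, and re-box the result so that the boxed interpolant still lies in the common language and has the correct polarity. The non-normal rules make this matching subtler than in the $\K$/$\KD$ setting: the two-premise congruence rule of $\mathsf{E}$ forces the interpolant to serve both the $\phi\seq\psi$ and the $\psi\seq\phi$ directions at once, and the box-gathering rule for $\mathsf{C}$ couples several antecedent boxes, so that producing a single interpolant that simultaneously satisfies $(\A\text{l})$, $(\E\text{r})$ and $(\A\E)$ with the right polarities is the real technical crux of the proof.
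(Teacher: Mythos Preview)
Your proposal is correct and follows essentially the same approach the paper indicates: the paper does not give a self-contained proof of this theorem but states that ``the method that we used to prove ULIP for intermediate and normal (intuitionistic) modal logics also applies here'' and refers to \citep{jalali&tabatabai&iemhoff}, and your outline is precisely an instantiation of that method (terminating cut-free calculi over the classical base extended by non-normal modal rules, Pitts-style recursive construction of interpolants with polarity tracking). Your identification of the $(\A\E)$ clause for the two-premise congruence rule of $\mathsf{E}$ as the delicate point is also on target.
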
 
Some of these results have already been obtained in the literature. 
That the logics $\mathsf{E}$, $\mathsf{M}$, and $\mathsf{MC}$ have UIP has already been established in \citep{Pattinson13,Santocanale&Venema10}, but that they have uniform Lyndon interpolation is, as far as we know, a new insight. In \citep{Orlandelli21} it is shown that $\mathsf{E}$, $\mathsf{M}$, $\mathsf{MC}$, $\mathsf{EN}$, $\mathsf{MN}$ have Craig interpolation. The proof that these logics have UIP is not a mere extension of the proof that they have interpolation but requires a different approach.

The proof is constructive in that the uniform interpolants can be constructed explicitly from the proof. 

\begin{theorem}\citep{jalali&tabatabai&iemhoff}
The non-normal modal logics $\mathsf{EC}$ and $\mathsf{ECN}$ do not have Craig interpolation, and thus not uniform (Lyndon) interpolation. 
\end{theorem}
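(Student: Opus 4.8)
Since uniform interpolation, and a fortiori uniform Lyndon interpolation, implies Craig interpolation (as observed in Subsection~\ref{sec:uip}), the word ``thus'' in the statement requires no separate argument: it suffices to refute the Craig interpolation property for each of $\mathsf{EC}$ and $\mathsf{ECN}$. The whole task therefore reduces to producing, for each logic, a derivable implication $\phi\imp\psi$ that has no interpolant in the common language.

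The plan is to argue semantically, via neighborhood models. Recall that $\mathsf{EC}$ is sound and complete with respect to the class of neighborhood frames $(W,N)$ in which every $N(w)$ is closed under binary intersection, and $\mathsf{ECN}$ with respect to the subclass in which moreover $W\in N(w)$ for all $w$; here $w\waar\bx\phi$ iff the truth set of $\phi$ belongs to $N(w)$. I would first fix atoms, say $p,q,r$, split into a ``left'' vocabulary containing $p$ and a ``right'' vocabulary containing $r$, with $q$ and the modal material around it as the only shared part, and exhibit formulas $\phi$ (over the left vocabulary) and $\psi$ (over the right vocabulary) with $\af_{\mathsf{EC}}\phi\imp\psi$. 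The mechanism forcing such a valid implication is the contrapositive of the axiom $(C)$: from $\bx a\en\bx b\imp\bx(a\en b)$ one obtains $\neg\bx(a\en b)\imp(\neg\bx a\of\neg\bx b)$, while the failure of monotonicity $(M)$ means that $\bx$ cannot be ``unpacked'' in the other direction, so that the content transmitted from $\phi$ to $\psi$ is not namable by any formula over the shared vocabulary alone.

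The heart of the argument is then the non-existence of an interpolant. I would construct two pointed neighborhood models $(M_1,w_1)$ and $(M_2,w_2)$, each satisfying intersection-closure (and, for the $\mathsf{ECN}$ case, also containing the unit $W$ in every neighborhood), such that $M_1,w_1\waar\phi$ while $M_2,w_2\not\waar\psi$, and such that $w_1$ and $w_2$ satisfy exactly the same formulas built from the shared vocabulary. The last clause is established by exhibiting a bisimulation between the two models tailored to intersection-closed neighborhood frames and restricted to the common atoms, together with the standard invariance lemma that bisimilar points agree on all formulas of the corresponding fragment. Given this, any putative interpolant $\alpha$ over the common vocabulary would satisfy $\af_{\mathsf{EC}}\phi\imp\alpha$ and $\af_{\mathsf{EC}}\alpha\imp\psi$; by soundness $\alpha$ would be true at $w_1$ and false at $w_2$, contradicting the common-language equivalence of $w_1$ and $w_2$.

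The main obstacle is twofold. First, one must find genuinely non-trivial $\phi$ and $\psi$: because $\mathsf{EC}$ lacks monotonicity, valid implications spanning essentially disjoint modal vocabularies are scarce, so the counterexample must be engineered carefully (possibly at modal depth two) so that the transmitted content is real yet inexpressible over the shared atoms. Second, one must pin down the correct notion of bisimulation for intersection-closed (and unit-containing) neighborhood frames and verify its invariance lemma, since this is exactly what makes the common-language equivalence of $w_1$ and $w_2$ rigorous. As a cross-check, the same non-existence can be recast algebraically: by the Maksimova-style correspondence between Craig interpolation and the (super)amalgamation property of the associated variety, it is equivalent to exhibit $\mathsf{EC}$-algebras (respectively $\mathsf{ECN}$-algebras) $B_0\hookrightarrow B_1$ and $B_0\hookrightarrow B_2$ over a common subalgebra $B_0$ admitting no amalgam in the variety, giving an independent route to the conclusion. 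Finally, the $\mathsf{ECN}$ case would be obtained from the $\mathsf{EC}$ case by adjoining $W$ to every neighborhood of the two models and checking that this neither validates $\psi$ at $w_2$ nor disturbs the bisimulation.
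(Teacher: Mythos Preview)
The paper itself does not prove this theorem: it simply states the result and cites \citep{jalali&tabatabai&iemhoff} for the argument, then moves on. So there is no in-paper proof to compare your attempt against.

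As an approach, what you describe is the standard semantic route and is indeed the one taken in the cited source: exhibit a derivable implication $\phi\imp\psi$, build two pointed neighborhood models for $\mathsf{EC}$ (respectively $\mathsf{ECN}$) that are indistinguishable in the common vocabulary yet separate $\phi$ from $\psi$, and conclude by an invariance lemma that no interpolant exists. Your remark that UIP and ULIP imply CIP, so only CIP needs refuting, is also correct.

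The genuine gap is that you have written a plan rather than a proof. All of the actual content sits in the two ``obstacles'' you name and then do not resolve: you never specify $\phi$ and $\psi$, you never construct $(M_1,w_1)$ and $(M_2,w_2)$, and you never formulate the relevant bisimulation notion for intersection-closed neighborhood frames or prove its invariance lemma. Your heuristic about the contrapositive of $(C)$ and the absence of $(M)$ is reasonable intuition, but it does not by itself produce a counterexample; the phrase ``possibly at modal depth two'' signals that you have not yet found one. In the cited paper the implication and the two models are given explicitly, and that explicit construction is the proof. Until you supply those concrete ingredients, what you have is an accurate description of how such an argument would be organized, not the argument itself.
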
 

The logics in the theorem above would thus be excellent candidates of non-normal modal logics to obtain negative results for in the style of Section~\ref{sec:exiseqcal}. But that we leave for future work.

\subsection{Conditional Logics} \label{sec:conditional}
Conditional logics formalize reasoning about conditional statements, which are statements of the form {\em if $\phi$ were the case, then $\psi$}, denoted as $\phi \triangleright \psi$. This includes formal reasoning about counterfactuals, which are instances of conditional statements in which the antecedent is false. Examples of well-known conditional logics are the logics {\sf CE} and {\sf CK}. The logic {\sf CE} consists of the classical propositional logic plus the following {\it CE}-rule, which could be seen as the conditional counterpart of the $E$-rule above (by reading the $\phi_i$ as $\top$):
\begin{center} 
 \AxiomC{$\phi_0 \ifff \phi_1$}
 \AxiomC{$\psi_0 \ifff \psi_1$}
 \BinaryInfC{$(\phi_1 \triangleright \psi_1 ) \ifff (\phi_0 \triangleright \psi_0 )$}
 \DisplayProof
\end{center}
The conditional logic {\sf CK} is the result of adding to classical propositional logic the following {\it CK}-rule, which derives the conditional counterpart of $K$: 
\begin{center}
 \AxiomC{$\{\phi_0 \ifff \phi_i\}_{i \in I}$}
 \AxiomC{$\{\psi_i\}_{i \in I} \imp \psi_0$}
 \BinaryInfC{$\{\phi_i \triangleright \psi_i\}_{i \in I} \imp (\phi_0 \triangleright \psi_0 )$}
 \DisplayProof
\end{center}
where $I$ is a possibly empty finite set. 

Other conditional logics can be obtained by adding the following conditional axioms to $\mathsf{CE}$, see Figure~\ref{fig:conditional} for a list of the logics. Note how $CX$ is the conditional counterpart of the non-normal principle $X$. 

\begin{center}
 $(\phi \triangleright \psi \wedge \theta) \to (\phi \triangleright \psi) \wedge 
  (\phi \triangleright \theta)$ \quad {\it (CM)}
 \hspace{.5cm}
 $(\phi \triangleright \psi) \wedge (\phi \triangleright \theta) \to 
  (\phi \triangleright \psi \wedge \theta) $ \quad {\it (CC)} \\
  \ \\
 $\phi \triangleright \top$\;  {\it (CN)}
 \hspace{.5cm}
  $(\phi \triangleright \psi) \vee (\phi \triangleright \neg \psi)$ \; {\it (CEM)}
 \hspace{.5cm}
 $\phi \triangleright \phi$ \; {\it (ID)}
\end{center}

\begin{figure}
\begin{center}
    \begin{tabular}{c c}
       $\mathsf{CEN}= \mathsf{CE}+ {\it (CN)}$ 
 & \quad
 $\mathsf{CM}= \mathsf{CE}+ {\it (CM)}$ 
    \end{tabular}
\end{center}
\begin{center}
    \begin{tabular}{c c}
       $\mathsf{CMN}= \mathsf{CM}+ {\it (CN)}$ 
 & \quad
 $\mathsf{CMC}= \mathsf{CM}+ {\it (CC)}$ 
    \end{tabular}
\end{center}
\begin{center}
    \begin{tabular}{c c}
       $\mathsf{CK}= \mathsf{CMC}+ {\it (CN)}$ 
 & \quad
 $\mathsf{CEC}= \mathsf{CE}+ {\it (CC)}$ 
    \end{tabular}
\end{center}
\begin{center}
    \begin{tabular}{c c}
       $\mathsf{CECN}= \mathsf{CEC}+ {\it (CN)}$ 
 & \quad
 $\mathsf{CKID}= \mathsf{CK}+ {\it (ID)}$ 
    \end{tabular}
\end{center}
\begin{center}
    \begin{tabular}{c c}
       $\mathsf{CKCEM}= \mathsf{CK}+ {\it (CEM)}$ 
 & \quad
 $\mathsf{CKCEMID}= \mathsf{CKCEM}+ {\it (ID)}$ 
    \end{tabular}
\end{center}
\caption{Some conditional logics}
\label{fig:conditional}
\end{figure}

Conditional logics have played an important role in philosophy ever since their introduction by Lewis \citep{Lewis73}. Another area where they appear is artificial intelligence, in the setting of nonmonotonic reasoning \citep{Friedman&Halpern94}. 
The first formal semantics and axiomatization of conditionals appeared in \cite{Stalnaker68} and in the years after that, the logics were further investigated in \citep{Chellas80, Lewis73, Cross&Nute84}.

By using the same technique as for Theorem~\ref{thm:non-normal} we can prove the counterpart of that theorem for conditional logics. Also in this case the uniform interpolants can be constructed explicitly from the proof.

\begin{theorem}\citep{jalali&tabatabai&iemhoff22}
 \label{thm:conditional}
The conditional logics $\mathsf{CE}$, $\mathsf{CM}$, $\mathsf{CEN}$, $\mathsf{CMN}$, $\mathsf{CMC}$, $\mathsf{CK}$, $\mathsf{CKID}$ have uniform Lyndon interpolation.
\end{theorem}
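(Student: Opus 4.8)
The plan is to follow the proof-theoretic strategy already used for the non-normal modal logics in Theorem~\ref{thm:non-normal}, which itself descends from Pitts' method for \IPC: for each logic in the list exhibit a terminating sequent calculus built from focused axioms and conditional analogues of semi-analytic rules, and then construct the uniform Lyndon interpolants $\Apar S$ and $\Epar S$ explicitly by recursion along the well-order on sequents that witnesses termination. Since all these logics extend classical propositional logic, the relevant order is $\sml_d$ (degree), with the degree function extended by $d(\phi \triangleright \psi) = d(\phi)+d(\psi)+1$.

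First I would fix, for each logic, a cut-free sequent calculus whose only new ingredient over the classical base \Gth\ is a family of rules for $\triangleright$ mirroring the $CE$-rule and $CK$-rule together with the axioms $CM$, $CC$, $CN$, $ID$. These rules have the shape of the modal semi-analytic rules $R_K$, $R_D$: the principal conditional formula is introduced in the conclusion, while the premises compare antecedents by two-sided sequents $\phi_0 \seq \phi_i$ and $\phi_i \seq \phi_0$ and gather the consequents in a single sequent on the $\psi_i$. I would verify that each calculus is \emph{finite}, \emph{instance finite}, and \emph{well-ordered} in the sense of Subsection~\ref{Sec: Terminating}, where the clause governing boxed contexts is replaced by the analogous clause for $\triangleright$-contexts; this gives termination.

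The core is the simultaneous construction of $\Apar S$ and $\Epar S$ by recursion on $\sml_d$. For each rule instance having a $p$-appropriate form of $S$ as its conclusion the premises are strictly smaller, so their interpolants are available by induction; I would then set $\Epar S$ to be a disjunction and $\Apar S$ a conjunction, each ranging over all finitely many such rule instances, of formulas assembled from the premise-interpolants in the manner of Lemmas~\ref{lem:rightsemi} and~\ref{lem:leftsemi} and their modal counterparts. Instance finiteness keeps these formulas finite. I would then establish the three clauses: ($\A$l) and ($\E$r) follow by induction directly from the construction, since each rule is invertible enough to reassemble a derivation from the premise-derivations, and the dependent clause ($\A\E$) is proved by matching a given $p$-partition $(S^r,S^i)$ against the rule actually applied and appealing to the inductive hypothesis on the premises. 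The Lyndon refinement is carried throughout by tracking $V^+$ and $V^-$ separately and checking that the conditional rules introduce $p$ with controlled polarity.

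The main obstacle I expect is the $CK$-rule, and more generally those conditional rules whose premises compare antecedents by \emph{both} $\phi_0 \seq \phi_i$ and $\phi_i \seq \phi_0$ over a finite index set $I$. Unlike $R_K$, the principal formula here couples several antecedent–consequent pairs through biconditionals, and a partition of the conclusion may split these pairs across the ``rest'' part $S^r$ and the ``interpolant'' part $S^i$. Showing that the disjunction/conjunction recipe still produces a formula that lies in the correct common language with the correct polarities \emph{and} validates ($\A\E$) for every such partition is the delicate point, and the axioms $CM$, $CC$, $CN$, $ID$ further constrain the admissible antecedent-matchings and must be folded into the case analysis. Once the $CK$-case is settled, the remaining logics $\mathsf{CE}$, $\mathsf{CM}$, $\mathsf{CEN}$, $\mathsf{CMN}$, $\mathsf{CMC}$, $\mathsf{CKID}$ follow by omitting or specialising the corresponding rules.
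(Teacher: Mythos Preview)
Your proposal is correct and follows precisely the approach the paper indicates: the paper gives no detailed proof here but states that the result is obtained ``by using the same technique as for Theorem~\ref{thm:non-normal}'', namely a Pitts-style recursive construction of the uniform Lyndon interpolants over a terminating sequent calculus, and your plan spells out exactly this. Your identification of the biconditional antecedent-premises of the $CK$-type rules as the main technical hurdle, and of the need to track polarity carefully across them, is apt and matches the emphasis in the cited source \citep{jalali&tabatabai&iemhoff22}.
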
 

\begin{theorem}\citep{jalali&tabatabai&iemhoff22}
The conditional logics {\sf CKCEM} and {\sf CKCEMID} have UIP but not ULIP. 
\end{theorem}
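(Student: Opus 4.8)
The plan is to split the statement into its positive half (both logics have UIP) and its negative half (neither has ULIP), and to attack them by, respectively, pushing the proof-theoretic machinery behind Theorem~\ref{thm:conditional} one step further, and then isolating a concrete obstruction arising from the axiom $(\phi \triangleright \psi) \of (\phi \triangleright \neg\phi)$\,---\,more precisely $(\phi \triangleright \psi)\of(\phi\triangleright\neg\psi)$, the conditional excluded middle $(CEM)$. The guiding observation is that the \emph{same} sequent-calculus construction that yields uniform interpolants for the whole $\mathsf{CE}$-family yields them here too, but that the step handling $(CEM)$ is the unique place where polarity information is destroyed; this both gives UIP for free and pinpoints where ULIP must break.

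For the positive half I would first fix a terminating sequent calculus \G\ for $\mathsf{CKCEM}$ (and, by adjoining a rule for $\phi\triangleright\phi$, for $\mathsf{CKCEMID}$) built from focused axioms, the propositional semi-analytic rules of \Gdyc/\Gth, the $\mathsf{CK}$-rule, and a disjunctive rule capturing $(CEM)$. Then I would show, exactly as in the proof invoked for Theorem~\ref{thm:conditional} (which uses the technique of Theorem~\ref{thm:non-normal}), that \G\ has uniform sequent interpolation: by induction on the depth of a derivation I construct for each sequent $S$ and atom $p$ formulas $\Ap S$ and $\Ep S$ satisfying the three conditions $(\A\mathrm l)$, $(\E\mathrm r)$, $(\A\E)$ from Subsection~\ref{sec:uip}. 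The propositional and $\mathsf{CK}$ cases are identical to the conditional-logic proof already cited; the only genuinely new case is the $(CEM)$-rule, where the interpolant of the conclusion is formed as a disjunction of the interpolants of the premises, mirroring the disjunctive shape of $(CEM)$. Since uniform sequent interpolation delivers UIP for the logic (as recorded after the definition in Subsection~\ref{sec:uip}), this settles the positive half, and adding the $\phi\triangleright\phi$-rule does not disturb the argument, giving $\mathsf{CKCEMID}$ as well.

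For the negative half the plan is to exhibit a single derivable sequent $S$ and an atom $p$ of a fixed polarity $\circ$ for which no \emph{polarity-respecting} uniform interpolant exists. The source of the obstruction is that $(CEM)$ is an irreducibly two-polarity principle: the atom occupying the consequent slot occurs positively in $\phi\triangleright\psi$ and negatively in $\phi\triangleright\neg\psi$, so the two occurrences cannot be separated. I would choose $S$ so that any correct uniform interpolant is forced to route through a $(CEM)$-disjunction applied to a subformula carrying an atom $r$ that occurs in $S$ with only one polarity; the negated disjunct then reintroduces $r$ with the opposite polarity, violating the Lyndon constraint $V^\star(\Apar S)\subseteq V^\star(S)$ (resp.\ for $\Epar S$). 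The UIP interpolant constructed in the positive half already displays this: it is $p$-free but mixes the polarity of a kept atom at the $(CEM)$-step. The remaining task is to prove that \emph{no} alternative formula repairs this, and the same witness works for $\mathsf{CKCEMID}$ since adjoining $\phi\triangleright\phi$ restores no polarity control.

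The hard part will be this non-existence claim, which is quantified over all candidate formulas and cannot be checked by inspecting one derivation. To make it rigorous I would pass to the neighbourhood/selection-function semantics for $\mathsf{CKCEM}$ (see \citep{Pacuit11}), identify an invariant of the models that captures exactly the implications provable from a $p$-positive (resp.\ $p$-negative) formula while preserving the polarities of the other atoms, and then build two pointed $\mathsf{CKCEM}$-models that agree on every formula of the admissible polarity pattern but disagree on $S$. Such a separation forces any formula of that pattern to fail one of $(\A\mathrm l)$, $(\E\mathrm r)$, $(\A\E)$, refuting ULIP while leaving UIP intact. Verifying that the separating models genuinely validate $(CEM)$ (and $(ID)$ in the $\mathsf{CKCEMID}$ case) is the delicate step on which the whole negative result rests, and I expect constructing these models\,---\,rather than the polarity bookkeeping\,---\,to be the main obstacle.
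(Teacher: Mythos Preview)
The paper does not prove this theorem: it is stated without argument and attributed to \citep{jalali&tabatabai&iemhoff22}. There is therefore no ``paper's own proof'' against which to benchmark your proposal; the lecture notes only indicate (in the sentence preceding Theorem~\ref{thm:conditional}) that the positive results in this subsection use the same technique as Theorem~\ref{thm:non-normal}, and then simply record the present statement as a cited fact.

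Assessed on its own terms, your plan is sound in outline and identifies the right mechanism. The positive half---running the uniform-sequent-interpolation induction through a terminating calculus for $\mathsf{CKCEM}$ and treating the $(CEM)$ rule as the one new case---is exactly the shape of argument the cited paper uses, and your diagnosis that this is the unique rule at which polarity information is lost is the correct heuristic for the negative half. Two points deserve care. First, the sequent rule for $(CEM)$ is not a left rule with a disjunctive conclusion as you suggest; in the calculi used for conditional logics it is typically a variant of the $\mathsf{CK}$ rule in which the consequent premise becomes multi-succedent (roughly: from $\{\phi_0\leftrightarrow\phi_i\}_i$ and $\psi_1,\dots,\psi_n\Rightarrow\psi_0,\psi_{n+1},\dots,\psi_m$ infer $\{\phi_i\triangleright\psi_i\}_{i\le n}\Rightarrow\phi_0\triangleright\psi_0,\{\phi_j\triangleright\psi_j\}_{j>n}$). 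It is precisely this migration of a $\psi_i$ across the sequent arrow in the premise that flips its polarity and breaks the Lyndon bookkeeping while leaving ordinary UIP intact; your write-up should reflect this rule shape rather than a disjunction-of-premises interpolant. Second, for the failure of ULIP the cited paper gives an explicit counterexample formula and a direct semantic argument; your proposal to build two separating neighbourhood models is a reasonable route, but you should expect that a single well-chosen instance of $(CEM)$ (with one fresh atom in the consequent position) already suffices, so aim for a concrete witness rather than a general invariance argument.
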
 

\begin{theorem}\citep{jalali&tabatabai&iemhoff22}
The conditional logics $\mathsf{CEC}$ and $\mathsf{CECN}$ do not have Craig interpolation. 
\end{theorem}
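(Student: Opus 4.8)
The plan is to establish this as a direct negative result by exhibiting an explicit counterexample to interpolation, rather than by invoking the general method of Subsection~\ref{sec:method}: I would produce concrete formulas $\phi$ and $\psi$ with $\af_{\mathsf{CEC}}\phi\imp\psi$ for which no formula in the common language of $\phi$ and $\psi$ can serve as an interpolant. Since $\mathsf{CECN}$ arises from $\mathsf{CEC}$ by adding $(CN)$, I would aim for a single pair $\phi,\psi$ that works for both (or two closely related pairs), so that the total-neighborhood condition imposed by $(CN)$ does not rescue interpolation. The natural engine for the argument is the neighborhood/selection-function semantics for conditional logics: at each world $w$ and each proposition $X$ (the truth set of an antecedent) one assigns a family $N(w,X)$ of neighborhoods, where the $(CE)$-rule forces $N(w,X)$ to depend only on $X$ as a set, the axiom $(CC)$ forces closure of $N(w,X)$ under finite intersection, and $(CN)$ forces $W\in N(w,X)$.

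With the semantics in place, the second step is to choose $\phi$ and $\psi$ so that the validity of $\phi\imp\psi$ genuinely depends on the intersection-closure given by $(CC)$, while the ``link'' it creates cannot be named in the shared vocabulary. Guided by the companion result that the non-normal modal logics $\mathsf{EC}$ and $\mathsf{ECN}$ fail interpolation, I would take $\phi$ and $\psi$ to be conditional formulas sharing a single antecedent but with consequents built from two disjoint blocks of propositional variables, arranged so that $(CC)$ lets one combine the two conditionals on the left into the conditional demanded on the right. The shared language then contains essentially only the common antecedent (together with $\bot,\top$), which is too impoverished to express the combined consequent. I note that one cannot simply transport the $\mathsf{EC}$ counterexample through the embedding $\chi\mapsto(a\triangleright\chi)$, precisely because $\mathsf{CEC}$ permits conditionals with arbitrary antecedents and hence offers ``more room'' for an interpolant; a genuinely conditional counterexample is therefore required.

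The third and decisive step is to turn ``too impoverished'' into a proof. Assuming an interpolant $\alpha$ in the common language, with $\af\phi\imp\alpha$ and $\af\alpha\imp\psi$, I would construct two selection-function models $M_1$ and $M_2$ that are \emph{bisimilar with respect to the common vocabulary}, so that they satisfy exactly the same common-language formulas and hence agree on $\alpha$; but $M_1$ is built to satisfy $\phi$ together with the failure of everything the right-hand side could demand, and $M_2$ to refute $\psi$. Soundness then forces $\alpha$ to hold at the distinguished world of one model and fail at the corresponding world of the other, contradicting common-language bisimilarity. Completeness of $\mathsf{CEC}$ (respectively $\mathsf{CECN}$) with respect to the chosen class of models is what lets us move between derivability and the models.

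The main obstacle I anticipate is exactly the invariance lemma underpinning the third step: one needs a correct notion of bisimulation for selection-function models that respects the $(CC)$ and $(CN)$ constraints and provably preserves all formulas built from the common vocabulary, including nested conditionals. Designing the two models so that they are genuinely indistinguishable in the shared fragment yet separated by $\phi$ and $\psi$ is delicate, since the intersection-closure condition couples neighborhoods across different antecedents and can inadvertently transmit information about the non-shared atoms. A cleaner alternative, worth keeping in reserve, is the algebraic route: recast the failure as a failure of the (super)amalgamation property for the variety dual to $\mathsf{CEC}$ and $\mathsf{CECN}$, exhibiting a span of algebras over a common subalgebra that admits no amalgam compatible with the conditional operation. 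This repackages the same combinatorial obstruction in a form where the bookkeeping over the common language is handled automatically by the shared subalgebra.
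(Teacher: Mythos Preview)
The paper does not actually prove this theorem: it is stated with a citation to \citep{jalali&tabatabai&iemhoff22} and no argument is given in the present text. There is thus nothing here to compare your proposal against at the level of method.

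That said, your outline is the standard and correct shape for such a result. Producing an explicit implication $\phi\imp\psi$ valid in $\mathsf{CEC}$ (and $\mathsf{CECN}$), then refuting any candidate interpolant by exhibiting two selection-function models that agree on the common vocabulary but separate $\phi$ from $\psi$, is exactly how failures of Craig interpolation are established in non-normal and conditional settings, and it is the route taken in the cited paper. Your diagnosis of the main obstacle---getting the right invariance lemma for selection-function models satisfying the intersection-closure constraint $(CC)$---is accurate; in practice one controls this by choosing $\phi$ and $\psi$ so that the common vocabulary is essentially degenerate, which keeps the bisimulation argument manageable. Your algebraic ``reserve'' option via failure of amalgamation is also legitimate and equivalent, though for the concrete logics here the direct model-theoretic construction is shorter. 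The only thing missing from your proposal is the actual choice of $\phi,\psi$ and the two models; once those are written down the rest is routine verification, so your plan is sound but incomplete as it stands.
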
 

As in the corresponding case for non-normal modal logics, the last theorem suggests that by generalizing Theorem~\ref{thm:conditional} one can obtain negative results for $\mathsf{CEC}$ and $\mathsf{CECN}$. But, again as in the case of non-normal logics, we leave that for future work.

\subsection{Lax Logic}

Finally, we present a positive result for a single modal logic that occurs in several areas of logic. This logic, 
Lax logic \PLL, has a single unary modal operator $\mdl$ and is axiomatized by the principles of intuitionistic propositional logic \IPC\ plus the following three modal principles:
\[
 \phi \imp \mdl\phi \ \ (\mdl R) \ \ \ \ \ \mdl\mdl\phi\imp\mdl\phi \ \ (\mdl M) \ \ \ \ \ 
 \mdl\phi \en\mdl\psi \imp \mdl(\phi\en\psi)\ \ (\mdl S)
\]
In type theory, the modal operator describes a certain type constructor, and there exists a Curry-Howard-like correspondence between Lax Logic and the computational lambda calculus \citep{Bentonetal98}. In algebraic logic, the operator is a so-called nucleus, and in recent work in that area, embeddings of superintuitionistic logics into extensions of \PLL\ have been used to provide new characterizations of subframe superintuitionistic logics \citep{Bezhanishvilietal19}, while in \citep{Melzer20} the method of canonical formulas for Lax Logic has been developed. The origins of the modality can be traced back to \citep{Curry57} but the logic received its name {\em Lax Logic} in the PhD-thesis of \citep{Mendler93}, where the logic was introduced in the setting of hardware verification. In that context, Lax Logic is used to model reasoning about statements of the form {\em $\phi$ holds under some constraint}, denoted as $\mdl\phi$. Further investigation led to \citep{Fairtlough&Mendler94, Fairtlough&Mendler97}, in which, among other things, a sequent calculus for the logic is provided.

To prove that \PLL\ has uniform interpolation, one needs another calculus. Let \GdycLL\ be the calculus \Gdyc\ plus the rules in Figure~\ref{fig:lax}.

\begin{figure}[h]
 \centering
\[\small 
 \begin{array}{ll}
  \AxiomC{$\Ga \seq \phi$}
  \RightLabel{$R\mdl$}
  \UnaryInfC{$\Ga \seq \mdl\phi$}
  \DisplayProof & 
  \AxiomC{$\Ga,\psi\seq \mdl\phi$}
  \RightLabel{$L\mdl$}
  \UnaryInfC{$\Ga,\mdl\psi\seq\mdl\phi$}
  \DisplayProof \\
  \\
  \AxiomC{$\Ga \seq \phi$}
  \AxiomC{$\Ga, \psi \seq \De$}
  \RightLabel{$R\mdl^\imp$}
  \BinaryInfC{$\Ga, \mdl\phi\imp \psi \seq \De$}
  \DisplayProof & 
  \AxiomC{$\Ga,\upchi \seq \mdl\phi$}
  \AxiomC{$\Ga,\mdl\upchi, \psi \seq \De$}
  \RightLabel{$L\mdl^\imp$}
  \BinaryInfC{$\Ga,\mdl\upchi, \mdl\phi\imp \psi \seq \De$}
  \DisplayProof 
 \end{array}
\]
\caption{$\GdycLL$ is $\Gdyc$ plus all four rules.}
 \label{fig:lax} 
\end{figure}
 
\begin{theorem}\citep{Iemhoff22}
\GdycLL\ is a terminating sequent calculus for Lax Logic in which Cut is admissible. Lax Logic has uniform interpolation. 
\end{theorem}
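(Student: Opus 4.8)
The statement bundles four claims — soundness and completeness of $\GdycLL$ for \PLL, admissibility of Cut, termination, and uniform interpolation of \PLL\ — and I would prove them in that order, since the first three are exactly the scaffolding that a Pitts-style argument for uniform interpolation needs. For adequacy, soundness amounts to checking that each rule of Figure~\ref{fig:lax}, read through the interpretation $\ben\Ga\imp\bof\De$, preserves \PLL-validity: $R\mdl$ uses only $\mdl R$, while $L\mdl$, $R\mdl^\imp$ and $L\mdl^\imp$ follow from $\mdl M$, $\mdl S$ and the monotonicity of $\mdl$ (a standard property of the lax modality). For completeness I would derive the three lax principles in $\GdycLL$ — for instance $\seq\mdl\mdl\phi\imp\mdl\phi$ from the axiom $\mdl\phi\seq\mdl\phi$ by one $L\mdl$ and an $R\!\imp$, and analogously for $\mdl R$ and $\mdl S$ — and invoke admissibility of modus ponens, which is a corollary of Cut-admissibility. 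Note that $\GdycLL$ internalises monotonicity already: from $\phi\seq\psi$ it yields $\phi\seq\mdl\psi$ by $R\mdl$ and then $\mdl\phi\seq\mdl\psi$ by $L\mdl$.

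Next I would establish the structural backbone. Depth-preserving admissibility of weakening and contraction extends the case analysis of Lemma~\ref{lem:structuralrules}; the four new rules are unproblematic because in each the premises arise from the conclusion by replacing a boxed formula with one of strictly smaller weight. Cut-admissibility I would prove by the Dyckhoff-style induction on the pair (weight of the cutformula, sum of the heights of the premise-derivations) rather than on height alone, since $R\mdl^\imp$ and $L\mdl^\imp$ reintroduce implications in their premises exactly as $L\!\imp\imp$ does in \Gdyc. The only genuinely new principal cases are a cut on $\mdl\phi$ (right-introduced by $R\mdl$, left-introduced by $L\mdl$), which reduces to a cut on the lighter $\phi$, and a cut on $\mdl\phi\imp\psi$; each is discharged by permuting the cut to subformulas of smaller weight, using $R\mdl$ to box a premise where needed and contraction to merge duplicated contexts. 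Termination is then immediate: \Gdyc\ is well-founded in $\sml_w$ and each lax rule strictly decreases $\sml_w$, since $R\mdl$ and $L\mdl$ lighten $\mdl\phi$ to $\phi$ and $R\mdl^\imp$, $L\mdl^\imp$ lighten $\mdl\phi\imp\psi$ to formulas of smaller weight.

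For the uniform interpolation itself I would prove that $\GdycLL$ has uniform sequent interpolation and then read off UIP for \PLL\ by taking $\Ap\psi:=\Ap(\ \seq\psi)$ and $\Ep\phi:=\Ep(\phi\seq\ )$, as in Subsection~\ref{sec:uip}. Following Pitts and the semi-analytic method of this paper, I would define $\Ap S$ and $\Ep S$ simultaneously by recursion along the well-order $\sml_w$ — whose well-foundedness is precisely what termination supplies — as a conjunction, respectively disjunction, ranging over all axioms and rule-instances having $S$ as conclusion, combining the interpolants of the premises by rule-specific operations. For the rules inherited from \Gdyc\ these operations are those of Lemmas~\ref{lem:rightsemi} and \ref{lem:leftsemi}; the four lax rules contribute four new clauses. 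One then verifies the independent interpolant properties $(\A l)$ and $(\E r)$ and the dependent property $(\A\E)$ by induction along $\sml_w$.

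The hard part will be the dependent property $(\A\E)$ for the four modal rules. Because the principal formula of each lax rule is a box, or a box-antecedent implication, while the premises carry boxed succedents, a $p$-partition of the conclusion does not refine straightforwardly to $p$-partitions of the premises: the modality must be pushed across the partition, and it is exactly here that the nucleus character of $\mdl$ — encoded by $\mdl R$, $\mdl M$, $\mdl S$ — is needed to show that the interpolants assembled from the premises still witness $(\A\E)$ for the conclusion, while respecting both the $p$-freeness condition and the restriction on atoms. The propositional cases and the independent properties, by contrast, are routine adaptations of the arguments already given for \Gthi\ and \Gdyc.
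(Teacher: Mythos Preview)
The paper does not prove this theorem: it is stated with a citation to \citep{Iemhoff22} and no argument is given in the text. So there is no ``paper's own proof'' to compare against; the relevant benchmark is the method of the cited paper, which is exactly the Pitts-style recursion along a terminating proof search that the surrounding sections of these notes describe.

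Your outline matches that method closely and is sound in its architecture: adequacy first, then depth-preserving weakening and contraction, then a Dyckhoff-style cut-elimination on (weight of cutformula, sum of heights), then termination via $\sml_w$, and finally a simultaneous recursive definition of $\Ap S$ and $\Ep S$ with verification of $(\A l)$, $(\E r)$, $(\A\E)$. Two small corrections. First, your remark that $R\mdl^\imp$ and $L\mdl^\imp$ ``reintroduce implications in their premises exactly as $L\!\imp\imp$ does'' is not quite right: neither rule places a fresh implication in a premise (unlike $L\!\imp\imp$, whose left premise contains $\psi\imp\gamma$). The reason one still needs the weight-based induction is rather that the left premise of $L\mdl^\imp$ introduces the formula $\mdl\phi$ in the succedent, which is not a subformula of the conclusion but is of strictly smaller weight than $\mdl\phi\imp\psi$. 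Second, in the interpolation step for $L\mdl$ and $L\mdl^\imp$ you will need the observation that the succedent of the relevant premise is already of the form $\mdl\phi$, so that when the partition forces the interpolant into the succedent position you can keep it boxed (using $R\mdl$ and then $L\mdl$ to transport the modality); this is the concrete content of what you describe as ``pushing the modality across the partition,'' and it is where the nucleus laws $\mdl R$, $\mdl M$, $\mdl S$ do their work.
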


\section{The Method for Other Properties}
 \label{sec:otherprop}
We have seen the method described in Subsection~\ref{sec:method} at work for various classes of logics. Recall that in the most general terms, the method aims to prove, for a given class $\SC$ of sequent calculi, and given classes $\SCL \subseteq \CL$ of logics, and a certain property $\PL$ of logics: 
\begin{description}
\item[\rm (I)] Any logic in $\CL$ that has a sequent calculus in $\SC$ has property $\PL$; 
\item[\rm (II)] No logic in $\SCL$ has property $\PL$. 
\end{description}
So one can conclude by contraposition the negative result:  
\begin{description}
\item[\rm (III)] No logic in $\SCL$ has a sequent calculus in $\SC$.
\end{description}

In Section~\ref{sec:exiseqcal} we have applied the method for the case that $\PL$ is interpolation or uniform (Lyndon) interpolation and obtained negative results of the form (III) for intermediate logics and various classes of (intuitionistic) modal logics. In Section~\ref{sec:positive} we presented positive results showing that various non-normal modal and conditional logics have uniform (Lyndon) interpolation. The proofs of these facts are based on the techniques developed for (I), still with $\PL$ being uniform (Lyndon) interpolation.

This choice for $\PL$ is convenient because uniform interpolation seems to be a rare property so that one can conclude results of the form (III) for broad classes of logics. However, there is no reason to believe that the method only works for interpolation properties. And in fact, it does not, as was shown by \cite{jalali&tabatabai22}. The class of logics they consider is that of intuitionistic modal logics in the language $\lang=\{\en,\of,\imp,\top,\bot,\bx,\dm\}$. For the property $\PL$ we turn to the realm of admissible rules. 

Given an intuitionistic modal logic $\lgc$ in $\lang$, a rule $\phi \ / \ \psi$ is {\it admissible} in $\lgc$, denoted $\phi\adm_{\lgc}\psi$, if for every $\lang$-substitution $\sig$: $\af_{\lgc}\sig\phi$ implies $\af_{\lgc}\sig\psi$. Here $\sig:\form_\lang \imp \form_\lang$ is an $\lang$-substitution if it commutes with the connectives and modal operators. Clearly, if $\af_\lgc \phi \imp \psi$, then $\phi\adm_{\lgc}\psi$. But the converse is not always the case. For example, in $\IPC$ and $\iK$ the Kreisel-Putnam rule
\[
 \AxiomC{$\neg\phi \imp \psi \of \psi'$}
 \UnaryInfC{$(\neg\phi \imp \psi) \of (\neg\phi \imp \psi')$}
 \DisplayProof 
\]
is admissible, but not derivable. For more on the notion of admissibility, see \citep{iemhoff15,iemhoff16b,rybakov97}.

Here we consider the notion of admissibility in the setting of sequent calculi and combine it with the notion of feasibility in the following definition. In it we use the notion of a Harrop formula \citep{troelstra&vandalen14} but then extended to our modal language: the set of {\it Harrop formulas} is inductively defined as the smallest class of formulas containing the atoms and constants of $\lang$, is closed under $\en,\bx$ and implications $\phi\imp\psi$ in which $\psi$ is a Harrop formula. 

\begin{definition}
Consider a sequent calculus \G. We denote with $\af^\pi S$ that $\pi$ is a proof in \G\ of $S$. 
\G\ has the {\it feasible Visser-Harrop property}, if there is a polynomial time algorithm $f$ such that for all $\Ga,\phi_i,\psi_i,\upchi_j$, where $\Ga$ consists of Harrop formulas, and any proof $\pi$ in \G: 
\[\small 
 \begin{array}{ll}
 \text{ if } & \af^\pi\Ga,\{\phi_i \imp \psi_i\}_{i\in I} \seq \upchi_1 \of \upchi_2, \\
 \text{ then } & \af^{f(\pi)}\Ga,\{\phi_i \imp \psi_i\}_{i\in I} \seq \upchi_j \text{ for some $j\in \{1,2\}$} \\ 
 \text{ or } & \af^{f(\pi)}\Ga,\{\phi_i \imp \psi_i\}_{i\in I} \seq \phi_i \text{ for some $i\in I$}. 
\end{array}
\]
In case $I$ is empty, it is called the {\it feasible disjunction property} of \G.
\end{definition}

Returning to our method, let property $\PL$ be the feasible Visser-Harrop property, and let the class of logics $\CL$ be the logics in language $\lang$ that are equal to or extend \CK, which extends \IPC\ by Modus Ponens, Necessitation, and the axioms 
\[
 \bx(\phi\imp\psi) \imp (\bx\phi \imp \bx\psi) \ \ \ \  \ \ \bx(\phi\imp\psi) \imp (\dm\phi \imp \dm\psi)
\]
Define the sequent calculus $\mathsf{CK}$ as the calculus that extends \LJ\ by the two modal rules that correspond to the axioms above:
\[
 \AxiomC{$\Ga \seq p$}
 \UnaryInfC{$\bx\Ga \seq \bx p$}
 \DisplayProof \ \ \ \ 
 \AxiomC{$\Ga,p\seq q$}
 \UnaryInfC{$\bx\Ga,\dm p\seq \dm q$}
 \DisplayProof 
\]
Note that the logic and sequent calculus $\mathsf{CK}$ defined here are different from the conditional logic $\mathsf{CK}$ defined and used in Subsection \ref{sec:conditional}. However, this abuse of notation will cause no confusion, as from now on, we will only work with $\mathsf{CK}$ as defined here.
The class of sequent calculi we are interested in extends $\mathsf{CK}$ with possibly a finite number of \emph{constructive} axioms, defined below.
We call this class of calculi $\CKSC$. Note that these calculi are allowed to contain the Cut rule. 
\begin{definition}
We define the following three sets of formulas: 
\begin{itemize}
\item
(\emph{Basic})
$B:=p \mid \top \mid \bot \mid B \wedge B \mid B \vee B \mid \Diamond B \text {, with } p \text { an atom}$
\item
(\emph{Almost positive})
$P:=B \mid P \wedge P \mid P \vee P \mid \Box P \mid \Diamond P \mid B \to P,$ with $B$ a basic formula
\item
(\emph{Constructive})
$C:= B \mid C \wedge C \mid \Box C \mid P \to C,$ with $B$ a basic and $P$ an almost positive formula.
\end{itemize}
An axiom is called \emph{constructive} if it is of the form $\Gamma \Rightarrow \phi$, where $\phi$ is a constructive formula.
\end{definition}
The following are examples of formulas that are basic, almost positive, and constructive: $p$, $(p \wedge q)$, $(p \vee q)$, $(p \to q)$, $\neg p$, $\Box p$, $\Diamond p$. 
The formula $p \vee \neg p$ is almost positive but not constructive. The formulas $\neg \neg p$ and $(p \to q) \to r$ are constructive but not almost positive.


Denote the Kripke frame that has only one reflexive node by $\mathcal{K}_r$ and the Kripke frame that has one irreflexive node by $\mathcal{K}_i$. If the class of calculi $\CKSC$, satisfies a mild technical condition, namely being \emph{$T$-free} or \emph{$T$-full}, then it has the feasible disjunction property. 

\begin{definition}
Let \G\ be a sequent calculus such that $\mathsf{CK} \subseteq \G$ and $\lgc$ be a logic such that $\CK \subseteq \lgc$. We say \G\ (resp. \lgc) is \emph{$T$-free} if it is valid in $\mathcal{K}_i$. We say \G\ (resp. \lgc) is \emph{$T$-full} if it is valid in $\mathcal{K}_r$ and both $T_a= (\Box p \to p)$ and $T_b= (p \to \Diamond p)$ are provable in it (resp. $T_a, T_b \in \lgc$).
\end{definition}

\begin{theorem}\citep{jalali&tabatabai22}
Any calculus \G\ in $\CKSC$ that is either $T$-free or $T$-full, has the feasible disjunction property. 
\end{theorem}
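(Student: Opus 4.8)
The plan is to build the polynomial-time algorithm $f$ explicitly, as a single traversal of the given proof $\pi$ that performs only local surgery, and to verify its correctness by induction on the height of $\pi$ under the following strengthened invariant: for every node of $\pi$ carrying a sequent $\Ga\seq\upchi_1\of\upchi_2$ in which $\Ga$ consists of Harrop formulas, the subproof above that node can be transformed in time polynomial in its size into a proof of $\Ga\seq\upchi_j$ for some $j\in\{1,2\}$. Since the feasible disjunction property is exactly the case $I=\varnothing$, this invariant applied at the root is what we want. The key structural observation is that, as long as the antecedent stays Harrop, the succedent disjunction is never split across branches: the modal rules $R\bx$ and $R\dm$ of $\CK$ produce a boxed or diamonded succedent and so cannot be the last step here; $R\en$ produces a conjunctive succedent and is likewise excluded; and there is no $L\of$ to apply because Harrop formulas contain no top-level disjunctions. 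Every remaining left rule (decomposing a Harrop $\phi\en\psi$, or a Harrop implication $\phi\imp\psi$ with $\psi$ Harrop) carries $\upchi_1\of\upchi_2$ unchanged into a single premise whose antecedent is again Harrop, by the closure properties of Harrop formulas. Thus in this region the disjunction descends along a unique path to a unique introduction point, which is either an $R\of$ (whence the disjunct is read off directly) or a constructive axiom $\Ga\seq\phi$ whose constructive succedent $\phi$ is a basic disjunction $B_1\of B_2$.

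The two situations the local reading-off cannot settle on its own are handled with the single-world Kripke model supplied by the hypothesis. If $\G$ is $T$-free it is sound with respect to $\mathcal{K}_i$, and if it is $T$-full it is sound with respect to $\mathcal{K}_r$; fix the corresponding model $\mathcal{M}$. Because $\mathcal{M}$ has a single world, its forcing relation is two-valued and decides every disjunction, so evaluating $\upchi_1$ and $\upchi_2$ in $\mathcal{M}$ — a feasible computation — selects, once and for all, a globally consistent target disjunct $\upchi_j$. For a constructive axiom with succedent $B_1\of B_2$ this gives the base case: the model decides which basic disjunct it forces, and for basic formulas one argues that the selected disjunct is itself derivable from $\Ga$, so the axiom can be replaced by a proof of $\Ga\seq B_{j}$.

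The genuine obstacle is the Cut rule, which $\CKSC$ explicitly permits. When the last rule is Cut, the succedent disjunction sits in the right premise $\phi,\Sig\seq\upchi_1\of\upchi_2$, whose antecedent now contains the arbitrary cut formula $\phi$ and is in general no longer Harrop; above such a node the Harrop discipline breaks down, $L\of$ reappears, and a genuine case analysis that seems to require \emph{both} disjuncts can resume. Eliminating cuts would restore the Harrop shape but is unavailable, since cut-elimination is not polynomial and would wreck the feasibility bound. The plan is therefore to leave the cut in place and push the fixed semantic decision through it: the left premise $\Ga\seq\phi$ certifies by soundness that $\phi$ holds in $\mathcal{M}$, so $\mathcal{M}$ remains a model of the right premise's antecedent, and the traversal may continue into the right premise relative to the already-fixed valuation rather than to any Harrop property of its antecedent, using the single world to collapse the branching case analysis onto $\upchi_j$. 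Formulating an invariant strong enough to survive cuts while keeping every local patch of bounded size, and showing the resulting recursion terminates in polynomially many steps, is where the argument becomes delicate and is the step I expect to be the crux. Once that is in place, assembling $f$ is routine: one pass over $\pi$ that, guided by the fixed valuation, rewrites each succedent occurrence of $\upchi_1\of\upchi_2$ to $\upchi_j$ and patches each $R\of$ and each basic-disjunction axiom to the selected side, producing a proof of $\Ga\seq\upchi_j$ of size polynomial in $|\pi|$.
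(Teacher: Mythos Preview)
The paper does not itself prove this theorem; it is stated with a citation to the source and no argument is given in these notes, so there is no in-paper proof to compare your attempt against.

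Evaluated on its own, your sketch has a genuine gap, and it is the one you yourself flag---but the problem is not just delicate bookkeeping. The hypothesis gives validity in a \emph{frame} ($\mathcal{K}_i$ or $\mathcal{K}_r$), i.e.\ soundness under \emph{every} valuation on one world; it does not hand you a single model~$\mathcal{M}$. You never say which valuation is used to ``decide'' $\upchi_1\of\upchi_2$, and different valuations will select different sides. Even after fixing a valuation, truth of $\upchi_j$ in that one model does not yield a \G-proof of $\Ga\seq\upchi_j$: soundness runs the wrong way. Your one-pass rewriting presupposes that every $R\of$ in $\pi$ already introduced the semantically chosen side and that every constructive-axiom leaf with basic succedent $B_1\of B_2$ can be replaced by a short \G-proof of the chosen $B_k$; neither follows from anything you have shown, and the second is precisely where the restriction to \emph{constructive} axioms and the $T$-free/$T$-full dichotomy must do real work that you have not supplied.

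The Cut case is where this bites hardest. Above a cut the antecedent need not be Harrop, so $L\of$ genuinely branches, and the two branches may terminate at $R\of$ introductions of \emph{different} disjuncts---both correctly, since their antecedents differ. A single fixed valuation cannot reconcile them: in the branch whose antecedent the valuation falsifies, there is no reason the subproof is transformable into a proof of your chosen $\upchi_j$. ``Pushing the fixed semantic decision through the cut'' is a hope, not an invariant; formulating one that survives Cut, tells you how to patch $R\of$ and constructive-axiom leaves locally, and still outputs a genuine \G-proof at the root is the entire content of the theorem, and it is absent from the proposal.
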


Recall that a logic $L$ has the \emph{disjunction property (DP)} if $A \vee B \in L$ implies either $A \in L$ or $B \in L$. Define a \emph{Visser rules} as
\begin{center}
\begin{tabular}{c}
$V_n$ \quad
 \AxiomC{$\Rightarrow \left(\bigwedge_{i=1}^n\left(p_i \rightarrow q_i\right) \rightarrow p_{n+1} \vee p_{n+2}\right) \vee r$}
 \UnaryInfC{$\Rightarrow \bigvee_{j=1}^{n+2}\left(\bigwedge_{i=1}^n\left(p_i \rightarrow q_i\right) \rightarrow p_j\right) \vee r$}
 \DisplayProof
\end{tabular}
\end{center}
for $n \geq 1$. By abuse of notation, if the logic $L$ has DP, we say $L$ admits $V_0$. 
We say the logic $L$ \emph{admits all Visser's rules} when it admits all $V_n$ for $n\geq 0$.

\begin{corollary}
If $\lgc$ is a  $T$-free or $T$-full logic in which at least one Visser rule is not admissible, then $\lgc$ does
not have a sequent calculus in $\CKSC$.
\end{corollary}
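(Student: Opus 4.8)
The plan is to run the abstract method (I)+(II)$\Rightarrow$(III) in contrapositive form, taking $\PL$ to be the feasible Visser--Harrop property and $\SC=\CKSC$ (restricted to $T$-free or $T$-full calculi). Part (I) is exactly the theorem quoted just above. What remains, and is the real content of the corollary, is the bridge between that property and admissibility of Visser's rules, so I would isolate the intermediate claim that \emph{if $\G$ is a calculus for $\lgc$ enjoying the feasible Visser--Harrop property, then every $V_n$ is admissible in $\lgc$}. Granting this, the corollary is immediate: assuming for contradiction that $\lgc$ had a calculus $\G\in\CKSC$, the hypothesis that $\lgc$ is $T$-free or $T$-full transfers to $\G$, since $\G$ is a calculus for $\lgc$ and hence $\af_\G(\,\seq\phi)$ iff $\af_\lgc\phi$; thus validity in $\mathcal{K}_i$ or $\mathcal{K}_r$ coincides for $\G$ and $\lgc$, and $T_a,T_b$ are provable in $\G$ whenever they lie in $\lgc$. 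By the theorem $\G$ then has the feasible property, so all $V_n$ are admissible in $\lgc$, contradicting the hypothesis that some $V_n$ is not.

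For the intermediate claim, first I would fix $n\geq 1$ and an arbitrary $\lang$-substitution $\sig$, abbreviating $A_i=\sig p_i$, $B_i=\sig q_i$, $R=\sig r$ and $H=\ben_{i=1}^n(A_i\imp B_i)$. Admissibility of $V_n$ amounts to: if the substituted premise is a theorem, i.e.\ $\af_\G\,\seq (H\imp(\sig p_{n+1}\of\sig p_{n+2}))\of R$, then so is the substituted conclusion $\af_\G\,\seq\bof_{j=1}^{n+2}(H\imp\sig p_j)\of R$. The substituted premise is a sequent of the shape $\seq\upchi_1\of\upchi_2$ with empty (hence Harrop) antecedent and no implication context, so the $I=\varnothing$ instance of the property (the bare disjunction property) splits it: either $\af_\G\,\seq R$, or $\af_\G\,\seq H\imp(\sig p_{n+1}\of\sig p_{n+2})$. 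In the first case $R$ is a disjunct of the conclusion and we are done.

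In the second case, moving $H$ into the antecedent (invertibility of $R\!\imp$) and decomposing the conjunction (invertibility of $L\en$) gives $\af_\G\,\{A_i\imp B_i\}_{i=1}^n\seq\sig p_{n+1}\of\sig p_{n+2}$, which is precisely of the form to which the feasible Visser--Harrop property applies, with empty Harrop context and the implications taken to be the $A_i\imp B_i$. The property then yields one of $\af_\G\{A_i\imp B_i\}_i\seq\sig p_{n+1}$, $\af_\G\{A_i\imp B_i\}_i\seq\sig p_{n+2}$, or $\af_\G\{A_i\imp B_i\}_i\seq A_i$ for some $i\leq n$. Reading each back as an implication, in every case $\af_\G\,\seq H\imp\sig p_j$ for some $j\in\{1,\dots,n+2\}$ (using $\sig p_j=A_j$ for $j\leq n$), whence $\af_\G\,\seq\bof_{j=1}^{n+2}(H\imp\sig p_j)\of R$. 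As $\sig$ was arbitrary this shows $V_n$ is admissible; the case $n=0$ is the bare disjunction property, already supplied by the theorem. Hence all Visser rules are admissible in $\lgc$.

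The genuinely hard part is part (I), the quoted theorem itself: establishing the feasible Visser--Harrop property for $T$-free/$T$-full calculi in $\CKSC$, which needs a proof-theoretic analysis of derivations and is where the constructive shape of the axioms and the $T$-free/$T$-full dichotomy are used; I assume it. Granting that, the only delicate point in the bridging step is bookkeeping: checking that after substitution the schematic premise of $V_n$ really matches the required format --- that the antecedents $A_i\imp B_i$ serve as the ``$\phi_i\imp\psi_i$'' and that the alternative ``$\seq\phi_i$'' of the property coincides with the disjunct $A_j=\sig p_j$ for $j\leq n$ --- and that, since admissibility concerns provability only, the polynomial-time clause of $f$ can simply be discarded. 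I would also flag the mild discrepancy that for $n\geq 1$ one needs the full feasible Visser--Harrop property (the $I\neq\varnothing$ content), not merely the disjunction property headlined in the theorem statement; this is the stronger form that the cited result in fact provides.
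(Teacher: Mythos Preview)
Your proposal is correct and follows the natural contrapositive route. The paper itself gives no proof of this corollary, so there is nothing to compare against line by line; your argument is the expected one and would fill the gap appropriately.

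Two remarks. First, your observation about the discrepancy is exactly right and worth keeping: the theorem as stated in these notes only asserts the feasible \emph{disjunction} property (the $I=\varnothing$ case), whereas the admissibility of $V_n$ for $n\geq 1$ genuinely requires the full feasible Visser--Harrop property. The cited source does prove the stronger property, and the notes themselves say it ``contains many further results that we cannot do justice here''; you are right to flag that the corollary is drawing on that stronger statement. Second, your appeal to invertibility of $R\!\imp$ and $L\en$ is fine but can be made more robust: since $\G$ is a calculus for $\lgc$, you can simply reason at the level of $\lgc$ (where $\af_\lgc H\imp(\upchi_1\of\upchi_2)$ is equivalent to $\af_\lgc \ben_i(A_i\imp B_i)\imp(\upchi_1\of\upchi_2)$, hence $\af_\G \{A_i\imp B_i\}_i\seq\upchi_1\of\upchi_2$) rather than relying on structural properties of the particular calculus, which in $\CKSC$ need not be cut-free.
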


The paper by \cite{jalali&tabatabai22} contains many further results that we cannot do justice here. But we hope that with this selection of results from it, we have illustrated a variant of the general method and shown how also, in this case, strong negative and positive results can be obtained from it. 

\section{The Proof Systems}

\subsection{Natural Deduction}

\subsubsection{Natural Deduction Systems \NDq\ and \NDqi}
\begin{figure}[H]
 \centering
\[
 \begin{array}{ll}
 \\
 \AxiomC{$\cald$}
 \noLine
 \UnaryInfC{$\phi$}
 \AxiomC{$\cald'$}
 \noLine
 \UnaryInfC{$\psi$}
\RightLabel{{\footnotesize $I\en$}}
 \BinaryInfC{$\phi \en \psi$}
 \DisplayProof & 
 \AxiomC{$\cald$}
 \noLine
 \UnaryInfC{$\phi_0 \en \phi_1$}
 \RightLabel{{\footnotesize $E\en$ \ (i=0,1)}}
 \UnaryInfC{$\phi_i$}
 \DisplayProof \\
 \\
  \AxiomC{$\cald$}
 \noLine
 \UnaryInfC{$\phi_i$}
\RightLabel{{\footnotesize $I\of$ \ (i=0,1)}}
 \UnaryInfC{$\phi_0 \of \phi_1$}
 \DisplayProof & 
 \AxiomC{$\cald$}
 \noLine
 \UnaryInfC{$\phi \of \psi$}
 \AxiomC{$[\phi]^a$} \noLine
 \UnaryInfC{$\cald_1$} \noLine
 \UnaryInfC{$\upchi$}
 \AxiomC{$[\psi]^b$} \noLine
 \UnaryInfC{$\cald_2$} \noLine
 \UnaryInfC{$\upchi$} 
 \LeftLabel{\footnotesize $a,b$}
\RightLabel{{\footnotesize $E\of$}}
 \TrinaryInfC{$\upchi$}
 \DisplayProof \\
 \\
 \AxiomC{$[\phi]^a$} \noLine
 \UnaryInfC{$\cald$} \noLine
 \UnaryInfC{$\psi$}
  \LeftLabel{\footnotesize $a$}\RightLabel{{\footnotesize $I\!\imp$ }}
 \UnaryInfC{$\phi \imp \psi$}
 \DisplayProof & 
 \AxiomC{$\cald$}
 \noLine
\UnaryInfC{$\phi\imp\psi$} 
 \AxiomC{$\cald'$}
 \noLine
 \UnaryInfC{$\phi$}
 \RightLabel{{\footnotesize $E\imp$}}
 \BinaryInfC{$\psi$}
 \DisplayProof \\
 \\
 \AxiomC{$\cald$}
 \noLine
 \UnaryInfC{$\phi(t)$} 
 \RightLabel{{\footnotesize $I\E$}}
 \UnaryInfC{$\E x \phi(x)$}
 \DisplayProof & 
 \AxiomC{$\cald$}
 \noLine
 \UnaryInfC{$\E x \phi(x)$}
 \AxiomC{$[\phi(y)]^a$} \noLine
 \UnaryInfC{$\cald'$} \noLine
 \UnaryInfC{$\upchi$} 
  \LeftLabel{\footnotesize $a$}\RightLabel{{\footnotesize $E\E$ }}
 \BinaryInfC{$\upchi$}
 \DisplayProof \\
 \\
 \AxiomC{$\cald$}
 \noLine
 \UnaryInfC{$\phi(y)$} 
 \RightLabel{{\footnotesize $I\A$}}
 \UnaryInfC{$\A x \phi(x)$}
 \DisplayProof & 
 \AxiomC{$\cald$}
 \noLine
 \UnaryInfC{$\A x \phi(x)$} 
 \RightLabel{{\footnotesize $E\A$}}
 \UnaryInfC{$\phi(t)$}
 \DisplayProof  \\
 \\
 \AxiomC{$[\neg\phi]^a$} \noLine
 \UnaryInfC{$\cald$} \noLine
 \UnaryInfC{$\bot$} 
  \LeftLabel{\footnotesize $a$}\RightLabel{{\footnotesize $E_c\bot$ }}
 \UnaryInfC{$\phi$}
 \DisplayProof & 
 \AxiomC{$\cald$} \noLine
 \UnaryInfC{$\bot$} 
 \RightLabel{{\footnotesize $E_i\bot$}}
 \UnaryInfC{$\phi$}\ 
 \DisplayProof \\
 \end{array}
\] 
\caption{The natural deduction system \NDq\ for \CQC\ (resp. \NDqi\ for \IQC) consists of the above rules (resp. the above rules except for $E_c\bot$). In $E\E$ and $I\A$, either $y=x$ or $y$ is not free in $\upchi$ and not free in $\phi$ and not free in any open assumption of $\cald$ except in the indicated one, $[\phi(y)]^a$. The variable $y$ is called an \emph{eigenvariable}. In $E\of$, $I\!\imp$ and $E\E$ the assumptions labelled $a$ and $b$ are closed in a derivation once the conclusion of the inference is reached.}
 \label{fig:ND}
\end{figure}

\subsection{Sequent Calculi}

\subsubsection{Sequent Calculus \Gone}

\begin{figure}[H]
 \centering
\[
 \begin{array}{ll}
 \multicolumn{2}{l}{\it Axioms}\\
 \\
 \AxiomC{$p \seq p$ \ \ \ \footnotesize {\it At} \ ($p$ an atom)}
 \DisplayProof &  
 \AxiomC{$\bot\seq \ $ \ \ \ \footnotesize $L\bot$}
 \DisplayProof \\
 \\ 
 \multicolumn{2}{l}{\it Structural\ rules}\\
 \\
 \AxiomC{$\Ga \seq \De$}
 \RightLabel{{\footnotesize $LW$}}
 \UnaryInfC{$\Ga,\phi \seq \De$}
 \DisplayProof & 
 \AxiomC{$\Ga \seq \De$}
 \RightLabel{{\footnotesize $RW$}}
 \UnaryInfC{$\Ga \seq \phi,\De$}
 \DisplayProof \\
 \\
 \AxiomC{$\Ga,\phi,\phi \seq \De$}
 \RightLabel{{\footnotesize $LC$}}
 \UnaryInfC{$\Ga,\phi \seq \De$}
 \DisplayProof & 
 \AxiomC{$\Ga \seq \phi,\phi,\De$}
 \RightLabel{{\footnotesize $RC$}}
 \UnaryInfC{$\Ga \seq \phi,\De$}
 \DisplayProof \\
 \\
 \multicolumn{2}{l}{\it Logical\ rules}\\
 \\
 \AxiomC{$\Ga, \phi, \psi \seq \De$}
 \RightLabel{{\footnotesize $L\en$}}
 \UnaryInfC{$\Ga, \phi\en \psi \seq \De$}
 \DisplayProof & 
 \AxiomC{$\Ga\seq \phi,\De$}
 \AxiomC{$\Ga\seq \psi,\De$}
 \RightLabel{{\footnotesize $R\en$}}
 \BinaryInfC{$\Ga \seq \phi \en \psi,\De$}
 \DisplayProof \\
 \\
 \AxiomC{$\Ga,\phi\seq \De$}
 \AxiomC{$\Ga,\psi\seq \De$}
 \RightLabel{{\footnotesize $L\of$}}
 \BinaryInfC{$\Ga,\phi\of \psi\seq \De$}
 \DisplayProof & 
 \AxiomC{$\Ga\seq \phi,\psi,\De$}
 \RightLabel{{\footnotesize $R\of$}}
 \UnaryInfC{$\Ga\seq \phi\of\psi,\De$}
 \DisplayProof \\
 \\
 \AxiomC{$\Ga\seq \phi,\De$}
 \AxiomC{$\Ga,\psi\seq \De$}
 \RightLabel{{\footnotesize $L\!\imp$}}
 \BinaryInfC{$\Ga,\phi\imp\psi\seq \De$}
 \DisplayProof & 
 \AxiomC{$\Ga,\phi \seq \psi,\De$}
 \RightLabel{{\footnotesize $R\!\imp$}}
 \UnaryInfC{$\Ga \seq \phi \imp \psi,\De$}
 \DisplayProof \\
 \end{array}
\] 
\caption{The sequent calculus $\Gone$ for \CPC.}
 \label{fig:Gone}
\end{figure}

The rules $LW$ and $RW$ are the {\it left} and {\it right weakening} rules, and the rules $LC$ and $RC$ are the {\it left} and {\it right contraction} rules. 

\subsubsection{Sequent Calculus \Gonei}
\begin{figure}[H]
 \centering
\[
 \begin{array}{ll}
 \multicolumn{2}{l}{\it Axioms}\\
 \\
 \AxiomC{$p \seq p$ \ \ \ \footnotesize {\it At} \ ($p$ an atom)}
 \DisplayProof &  
 \AxiomC{$\bot\seq \ $ \ \ \ \footnotesize $L\bot$}
 \DisplayProof \\
 \\ 
 \multicolumn{2}{l}{\it Structural\ rules}\\
 \\
 \multicolumn{2}{l}{\AxiomC{$\Ga \seq \De$}
 \RightLabel{{\footnotesize $LW$}}
 \UnaryInfC{$\Ga,\phi \seq \De$}
 \DisplayProof \ \ \ \  
 \AxiomC{$\Ga \seq \ $}
 \RightLabel{{\footnotesize $RW$}}
 \UnaryInfC{$\Ga \seq \phi$}
 \DisplayProof  \ \ \ \ 
 \AxiomC{$\Ga,\phi,\phi \seq \De$}
 \RightLabel{{\footnotesize $LC$}}
 \UnaryInfC{$\Ga,\phi \seq \De$}
 \DisplayProof} \\
 \\
 \multicolumn{2}{l}{\it Logical\ rules}\\
 \\
 \AxiomC{$\Ga, \phi, \psi \seq \De$}
 \RightLabel{{\footnotesize $L\en$}}
 \UnaryInfC{$\Ga, \phi\en \psi \seq \De$}
 \DisplayProof & 
 \AxiomC{$\Ga\seq \phi$}
 \AxiomC{$\Ga\seq \psi$}
 \RightLabel{{\footnotesize $R\en$}}
 \BinaryInfC{$\Ga \seq \phi \en \psi$}
 \DisplayProof \\
 \\
 \AxiomC{$\Ga,\phi\seq \De$}
 \AxiomC{$\Ga,\psi\seq \De$}
 \RightLabel{{\footnotesize $L\of$}}
 \BinaryInfC{$\Ga,\phi\of \psi\seq \De$}
 \DisplayProof & 
 \AxiomC{$\Ga\seq \phi_i$}
 \LeftLabel{$(i=0,1)$}
 \RightLabel{{\footnotesize $R\of$}}
 \UnaryInfC{$\Ga\seq \phi_0\of\phi_1$}
 \DisplayProof \\
 \\
 \AxiomC{$\Ga\seq \phi$}
 \AxiomC{$\Ga,\psi\seq \De$}
 \RightLabel{{\footnotesize $L\!\imp$}}
 \BinaryInfC{$\Ga,\phi\imp\psi\seq \De$}
 \DisplayProof & 
 \AxiomC{$\Ga,\phi \seq \psi$}
 \RightLabel{{\footnotesize $R\!\imp$}}
 \UnaryInfC{$\Ga \seq \phi \imp \psi$}
 \DisplayProof \\
 \end{array}
\] 
\caption{The sequent calculus $\Gonei$ for \IPC. $\De$ contains at most one formula.}
 \label{fig:Gonei}
\end{figure}

\subsubsection{Sequent Calculus \Gthq}

\begin{figure}[H]
 \centering
\[
 \begin{array}{ll}
 \multicolumn{2}{l}{\it Axioms}\\
 \\
 \AxiomC{$\Ga,p \seq p,\De$ \ \ \ \footnotesize {\it At} \ ($p$ an atom)}
 \DisplayProof &  
 \AxiomC{$\Ga,\bot\seq \De$ \ \ \ \footnotesize $L\bot$}
 \DisplayProof \\
 \\
 \multicolumn{2}{l}{\it Logical\ rules}\\
 \\
 \AxiomC{$\Ga, \phi, \psi \seq \De$}
 \RightLabel{{\footnotesize $L\en$}}
 \UnaryInfC{$\Ga, \phi\en \psi \seq \De$}
 \DisplayProof & 
 \AxiomC{$\Ga\seq \phi,\De$}
 \AxiomC{$\Ga\seq \psi,\De$}
 \RightLabel{{\footnotesize $R\en$}}
 \BinaryInfC{$\Ga \seq \phi \en \psi,\De$}
 \DisplayProof \\
 \\
 \AxiomC{$\Ga,\phi\seq \De$}
 \AxiomC{$\Ga,\psi\seq \De$}
 \RightLabel{{\footnotesize $L\of$}}
 \BinaryInfC{$\Ga,\phi\of \psi\seq \De$}
 \DisplayProof & 
 \AxiomC{$\Ga\seq \phi,\psi,\De$}
 \RightLabel{{\footnotesize $R\of$}}
 \UnaryInfC{$\Ga\seq \phi\of\psi,\De$}
 \DisplayProof \\
 \\
 \AxiomC{$\Ga\seq \phi,\De$}
 \AxiomC{$\Ga,\psi\seq \De$}
 \RightLabel{{\footnotesize $L\!\imp$}}
 \BinaryInfC{$\Ga,\phi\imp\psi\seq \De$}
 \DisplayProof & 
 \AxiomC{$\Ga,\phi \seq \psi,\De$}
 \RightLabel{{\footnotesize $R\!\imp$}}
 \UnaryInfC{$\Ga \seq \phi \imp \psi,\De$}
 \DisplayProof \\
 \\
 \AxiomC{$\Ga,\phi(y) \seq \De$}
 \RightLabel{{\footnotesize $L\E$}}
 \UnaryInfC{$\Ga,\E x \phi(x) \seq \De$}
 \DisplayProof & 
 \AxiomC{$\Ga \seq \phi(t),\E x \phi(x),\De$}
 \RightLabel{{\footnotesize $R\E$}}
 \UnaryInfC{$\Ga \seq \E x \phi(x),\De$}
 \DisplayProof \\
 \\
 \AxiomC{$\Ga,\A x \phi(x), \phi(t)\seq \De$}
 \RightLabel{{\footnotesize $L\A$}}
 \UnaryInfC{$\Ga,\A x\phi(x) \seq \De$}
 \DisplayProof & 
 \AxiomC{$\Ga \seq\phi(y),\De$}
 \RightLabel{{\footnotesize $R\A$}}
 \UnaryInfC{$\Ga \seq \A x \phi(x),\De$}
 \DisplayProof 
 \end{array}
\] 
\caption{The sequent calculus \Gthq\ for \CQC. In $L\E$ and $R\A$, $y$ is not free in $\Ga,\De, \phi$and is called an \emph{eigenvariable}.}
 \label{fig:Gthq}
\end{figure}

\subsubsection{Sequent Calculus \Gthqi}

\begin{figure}[H]
 \centering
\[ 
 \begin{array}{ll}
 \multicolumn{2}{l}{\it Axioms}\\
 \\
 \AxiomC{$\Ga,p \seq p$ \ \ \ \footnotesize {\it At} \ ($p$ an atom)}
 \DisplayProof &  
 \AxiomC{$\Ga,\bot\seq \De$ \ \ \ \footnotesize $L\bot$}
 \DisplayProof \\
 \\
 \multicolumn{2}{l}{\it Logical\ rules}\\
 \\
 \AxiomC{$\Ga, \phi, \psi \seq \De$}
 \RightLabel{{\footnotesize $L\en$}}
 \UnaryInfC{$\Ga, \phi\en \psi \seq \De$}
 \DisplayProof & 
 \AxiomC{$\Ga\seq \phi$}
 \AxiomC{$\Ga\seq \psi$}
 \RightLabel{{\footnotesize $R\en$}}
 \BinaryInfC{$\Ga \seq \phi \en \psi$}
 \DisplayProof \\
 \\
 \AxiomC{$\Ga,\phi\seq \De$}
 \AxiomC{$\Ga,\psi\seq \De$}
 \RightLabel{{\footnotesize $L\of$}}
 \BinaryInfC{$\Ga,\phi\of \psi\seq \De$}
 \DisplayProof & 
 \AxiomC{$\Ga\seq \phi_i$}
 \RightLabel{{\footnotesize $R\of$ \ (i=0,1)}}
 \UnaryInfC{$\Ga\seq \phi_0\of\phi_1$}
 \DisplayProof \\
 \\
 \AxiomC{$\Ga,\phi\imp\psi\seq \phi$}
 \AxiomC{$\Ga,\psi\seq \De$}
 \RightLabel{{\footnotesize $L\!\imp$}}
 \BinaryInfC{$\Ga,\phi\imp\psi\seq \De$}
 \DisplayProof & 
 \AxiomC{$\Ga,\phi \seq \psi$}
 \RightLabel{{\footnotesize $R\!\imp$}}
 \UnaryInfC{$\Ga \seq \phi \imp \psi$}
 \DisplayProof \\
 \\
 \AxiomC{$\Ga,\phi(y) \seq \De$}
 \RightLabel{{\footnotesize $L\E$}}
 \UnaryInfC{$\Ga,\E x \phi(x) \seq \De$}
 \DisplayProof & 
 \AxiomC{$\Ga \seq \phi(t)$}
 \RightLabel{{\footnotesize $R\E$}}
 \UnaryInfC{$\Ga \seq \E x \phi(x)$}
 \DisplayProof \\
 \\
 \AxiomC{$\Ga,\A x \phi(x), \phi(t)\seq \De$}
 \RightLabel{{\footnotesize $L\A$}}
 \UnaryInfC{$\Ga,\A x\phi(x) \seq \De$}
 \DisplayProof & 
 \AxiomC{$\Ga \seq\phi(y)$}
 \RightLabel{{\footnotesize $R\A$}}
 \UnaryInfC{$\Ga \seq \A x \phi(x)$}
 \DisplayProof 
 \end{array}
\] 
\caption{The sequent calculus \Gthqi\ for \IQC\ in which $|\De| \leq 1$. 
In $L\E$ and $R\A$, $y$ is not free in $\Ga,\De, \phi$ and is called an \emph{eigenvariable}.}
 \label{fig:Gthqi}
\end{figure}

\subsubsection{Sequent Calculus \Gdyc}

\begin{figure}[H]
 \centering
\[
 \begin{array}{ll}
 \multicolumn{2}{l}{\it Axioms}\\
 \\
 \AxiomC{$\Ga,p \seq p$ \ \ \ \footnotesize {\it At} \ ($p$ an atom)}
 \DisplayProof &  
 \AxiomC{$\Ga,\bot\seq \De$ \ \ \ \footnotesize $L\bot$}
 \DisplayProof \\
 \\
 \multicolumn{2}{l}{\it Logical\ rules}\\
 \\
 \AxiomC{$\Ga, \phi, \psi \seq \De$} 
 \RightLabel{{\footnotesize $L\en$}}
 \UnaryInfC{$\Ga, \phi\en \psi \seq \De$}
 \DisplayProof & 
 \AxiomC{$\Ga\seq \phi$}
 \AxiomC{$\Ga\seq \psi$}
 \RightLabel{{\footnotesize $R\en$}}
 \BinaryInfC{$\Ga \seq \phi \en \psi$}
 \DisplayProof \\
 \\
 \AxiomC{$\Ga,\phi\seq \De$}
 \AxiomC{$\Ga,\psi\seq \De$}
 \RightLabel{{\footnotesize $L\of$}}
 \BinaryInfC{$\Ga,\phi\of \psi\seq \De$}
 \DisplayProof & 
 \AxiomC{$\Ga\seq \phi_i$}
 \RightLabel{{\footnotesize $R\of$ \ (i=0,1)}}
 \UnaryInfC{$\Ga\seq \phi_0\of\phi_1$}
 \DisplayProof \\
 \\
 
 \AxiomC{$\Ga, p,\phi \seq \De$}
 \RightLabel{{\footnotesize $Lp\imp$ \ ($p$ an atom)}}
 \UnaryInfC{$\Ga, p,p \imp \phi \seq \De$}
 \DisplayProof & 
 \AxiomC{$\Ga,\phi \seq \psi$}
 \RightLabel{{\footnotesize $R\!\imp$}}
 \UnaryInfC{$\Ga \seq \phi \imp \psi$}
 \DisplayProof \\
 \\
 \AxiomC{$\Ga,\phi\imp (\psi\imp\gamma)\seq\De$}
 \RightLabel{{\footnotesize $L\en\imp$}}
 \UnaryInfC{$\Ga, \phi\en\psi \imp \gamma \seq \De$}
 \DisplayProof & 
 \AxiomC{$\Ga,\phi \imp \gamma, \psi \imp \gamma \seq \De$}
 \RightLabel{{\footnotesize $L\of\imp$}}
 \UnaryInfC{$\Ga,\phi \of \psi \imp \gamma \seq \De$}
 \DisplayProof \\
 \\
 \AxiomC{$\Ga, \psi\imp \gamma \seq \phi \imp \psi$}
 \AxiomC{$\gamma,\Ga \seq \De$}
 \RightLabel{{\footnotesize $L\!\imp\!\imp$}}
 \BinaryInfC{$\Ga, (\phi\imp \psi) \imp \gamma \seq \De$}
 \DisplayProof
 \end{array}
\] 
\caption{The sequent calculus \Gdyc\ for \IQC\ in which $|\De| \leq 1$.}
 \label{fig:Gdyc}
\end{figure}


\end{document}